\titleformat{\section}{\normalfont\scshape\centering}{\thesection}{1em}{}
  \titleformat{\subsection}{\bfseries}{\thesubsection}{1em}{}
\newcommand{\qedd}{\hfill \ensuremath{\Box}}
\newtheorem{theorem}{Theorem}
\newtheorem{lemma}{Lemma}
\newtheorem{definition}{Definition}
\newtheorem{proposition}{Proposition}
\newtheorem{remark}{Remark}
\title{\scshape \Large Almost primes in almost all short intervals}
\author{\scshape \normalsize Joni Ter\"av\"ainen}
\date{}
\begin{document}
\maketitle

\begin{abstract}
Let $E_k$ be the set of positive integers having exactly $k$ prime factors. We show that almost all intervals $[x,x+\log^{1+\varepsilon} x]$ contain $E_3$ numbers, and almost all intervals $[x,x+\log^{3.51} x]$ contain $E_2$ numbers. By this we mean that there are only $o(X)$ integers $1\leq x\leq X$ for which the mentioned intervals do not contain such numbers. The result for $E_3$ numbers is optimal up to the $\varepsilon$ in the exponent. The theorem on $E_2$ numbers improves a result of Harman, which had the exponent $7+\varepsilon$ in place of $3.51$. We also consider general $E_k$ numbers, and find them on intervals whose lengths approach $\log x$ as $k\to \infty$.
\end{abstract}

\section{Introduction}

When studying $E_k$ numbers (products of exactly $k$ primes), it is natural to ask, how short intervals include such numbers almost always. Since Wolke's work \cite{wolke}, the essential question has been minimizing the number $c$ such that almost all intervals $[x,x+\log^{c}x]$ contain an $E_k$ number, meaning that all but $o(X)$ such intervals with integer $x\in [1,X]$ contain such a number. Wolke showed in 1979 that the value $c=5\cdot 10^6$ is admissible for $E_2$ numbers. This was improved to $c=7+\varepsilon$ for $E_2$ numbers by Harman \cite{harman-almostprimes} in 1982. Wolke's and Harman's methods are based on reducing the problem to estimates for sums over the zeros of the Riemann zeta function, and on the fact that the density hypothesis is known to hold in a non-trivial strip (namely Jutila's \cite{jutila-density} region $\sigma\geq \frac{11}{14}$ in Harman's argument\footnote{In fact, introducing into Harman's argument the widest known den'sity hypothesis region $\sigma\geq \frac{25}{32}$, due to Bourgain \cite{bourgain} from 2000, would give $c=6.86$.} ). To the author's knowledge, Harman's exponent for $E_2$ numbers was the best one known also for $E_k$ numbers with $k\geq 3$.\\

If one considers $P_k$ numbers, which are products of no more than $k$ primes, one can obtain improvements. Mikawa \cite{mikawa} showed in 1989 that for any function $\psi(x)$ tending to infinity, the interval $[x,x+\psi(x)\log^{5}x]$ contains a $P_2$ number almost always. Furthermore, Friedlander and Iwaniec \cite[Chapters 6 and 11]{friedlander} proved that for any such function $\psi(x)$ the interval $[x,x+\psi(x)\log x]$ contains a $P_4$ number almost always. They also hint how to prove the same result for $P_3$ numbers. There is however a crucial difference between $E_k$ and $P_k$ numbers, since the $E_k$ numbers are subject to the famous parity problem, and hence cannot be dealt with using only classical combinatorial sieves, which are the basis of the arguments on $P_k$ numbers. Therefore, the $E_k$ numbers are also a much closer analog of primes than the $P_k$ numbers.\\

One would naturally expect almost all intervals $[x,x+\psi(x)\log x]$ to have also prime numbers in them, and this would follow from the heuristic that the proportion of $x$ for which $[x,x+\lambda \log x]$ contains exactly $m$ primes for fixed $m$ and $\lambda>0$ should be given by the Poisson distribution $\frac{\lambda^m}{m!}e^{-\lambda}$. Such results are however far beyond the current knowledge, as the shortest intervals, almost all of which are known to contain primes, are $[x,x+x^{\frac{1}{20}+\varepsilon}]$ by a result of Jia \cite{jia}. However, the results of Goldston-Pintz-Y{\i}ld{\i}r{\i}m \cite{goldston1},\cite{goldston2} on short gaps between primes tell that for any $\lambda>0$ there is a positive proportion of integers $x\leq X$ for which $[x,x+ \lambda\log x]$ contains a prime, but it is not known whether this proportion approaches $1$ as $\lambda$ increases. A recent result of Freiberg \cite{freiberg}, in turn, gives exactly $m$ primes on an interval $[x,x+\lambda \log x]$ for at least $X^{1-o(1)}$ integers $x\leq X$. Concerning conditional results, Gallagher \cite{gallagher} showed that the Poisson distribution of primes in short intervals would follow from a certain uniform form of the Hardy-Littlewood prime $k$-tuple conjecture. Under the Riemann hypothesis, it was shown by Selberg \cite{selberg} in 1943 that almost all intervals $[x,x+\psi(x)\log^2 x]$ contain primes. For $E_2$ numbers, under the density hypothesis, Harman's argument from \cite{harman-almostprimes} would give the exponent $c=3+\varepsilon$.\\

In this paper, we establish the exponent $c=1+\varepsilon$ for $E_3$ numbers and the exponent $c=3.51$ for $E_2$ numbers. Our results for $E_2,E_3$ and $E_k$ numbers are stated as follows.

\begin{theorem}\label{t1} Almost all intervals $[x,x+(\log \log x)^{6+\varepsilon}\log x]$ contain a product of exactly three distinct primes.\end{theorem}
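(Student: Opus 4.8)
The plan is to establish the stronger assertion that, writing $h=h(x)=(\log\log x)^{6+\varepsilon}\log x$, for all but $o(X)$ integers $x\in[X,2X]$ the interval $(x,x+h]$ already contains an integer of the shape $n=p_1p_2p_3$ with $p_1<p_2<p_3$ prime; such an $n$ is automatically a product of three distinct primes, and a dyadic decomposition of $[1,X]$ then yields the theorem (on $[X,2X]$ the quantity $h$ is essentially constant). Set $N(x)=\#\{n\in(x,x+h]:n=p_1p_2p_3,\ p_1<p_2<p_3\ \text{prime}\}$. By Chebyshev's inequality, $\#\{x\in[X,2X]:N(x)=0\}\le X\,\mathrm{Var}(N)/(\mathbb{E}N)^2$, the mean and variance being taken over $x\in[X,2X]$; so it suffices to prove $\mathbb{E}N\to\infty$ together with $\mathrm{Var}(N)=o\big((\mathbb{E}N)^2\big)$.

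The mean is elementary: interchanging summation, $\sum_{X\le x\le 2X}N(x)=(h+O(1))\,\#\{n\in(X,2X]:\mu^2(n)=1,\ \omega(n)=3\}$, which by Landau's asymptotic equals $(1+o(1))\,\tfrac{h}{2}\cdot\tfrac{X(\log\log X)^2}{\log X}$; hence $\mathbb{E}N\sim\tfrac12(\log\log X)^{8+\varepsilon}\to\infty$, and no information on primes in short intervals is needed for this. Likewise the independence heuristic predicts $\tfrac1X\sum_x N(x)^2=(1+o(1))(\mathbb{E}N)^2$, and the whole difficulty is to prove this, i.e.\ that the off-diagonal contribution does not exceed its expected size by more than $o\big(X(\mathbb{E}N)^2\big)$.

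The heart of the matter is therefore the variance bound, and this is where the three prime factors enter essentially. Detecting $E_3$ numbers through the triple convolution $\mathbf{1}_{E_3}=\tfrac16\,\mathbf{1}_{\mathbb{P}}\ast\mathbf{1}_{\mathbb{P}}\ast\mathbf{1}_{\mathbb{P}}$ minus the (negligible) diagonal corrections with $p_i=p_j$, one writes $N(x)$, up to such corrections, as $\tfrac16\sum_{p_1,p_2}\big(\pi\big(\tfrac{x+h}{p_1p_2}\big)-\pi\big(\tfrac{x}{p_1p_2}\big)\big)$, only the ranges $p_1p_2\lesssim h$ contributing. Applying Perron's formula to each of the three prime factors, the fluctuation of $N(x)$ about its mean becomes a triple sum over the nontrivial zeros $\rho_1,\rho_2,\rho_3$ of $\zeta$, whose summand carries the short-interval factor $(x+h)^{\rho_1}-x^{\rho_1}$ together with denominators of the form $(\rho_1-\rho_2)(\rho_1-\rho_3)$. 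The crucial structural point---the reason $E_3$ is so much better behaved than $E_2$---is that a triple term is small unless $\rho_2,\rho_3$ both lie close to $\rho_1$ \emph{and} $\rho_1$ itself lies close to the line $\Re s=1$; the contribution of the rare triples in which two or three of the zeros are far from the line is controlled by the classical zero-density estimates $N(\sigma,T)\ll T^{c(1-\sigma)+\varepsilon}$ valid in a fixed strip $\sigma\ge\sigma_0$, and the two extra convolutions supply exactly the slack needed to push the admissible interval length all the way down to $(\log\log x)^{6+\varepsilon}\log x$. Carrying this out requires (i) isolating the near-diagonal ranges $\rho_i\approx\rho_j$, treated by mean-value (large-sieve/Montgomery-type) estimates for $\zeta$ on vertical segments; (ii) estimating the genuinely off-diagonal ranges by the density estimates, the power $6+\varepsilon$ being precisely the margin that makes the main term dominate; and (iii) the routine check that the corrections from $p_i=p_j$, from non-squarefree $n$, and from the tail $p_1p_2>h$ contribute $o(\mathbb{E}N)$ on average---immediate, since the largest prime factor of an admissible $n$ exceeds $X/h$.

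Once $\mathrm{Var}(N)=o\big((\mathbb{E}N)^2\big)$ is proved, Chebyshev's inequality gives $\#\{x\in[X,2X]:N(x)=0\}=o(X)$, and summing over the dyadic blocks finishes the proof. The main obstacle is step (ii): extracting enough cancellation from the triple-zero sum---equivalently, proving a strong enough second-moment estimate for the triple autocorrelation of primes in short intervals of length $\asymp\log x$---since for a single zeta factor (primes in $[x,x+\log x]$) no comparable bound is available, and even for two factors ($E_2$) one is forced to the substantially larger exponents treated in the companion results.
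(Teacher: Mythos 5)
Your proposal is an outline of the classical Wolke--Harman strategy: expand the indicator of $E_3$ as a triple convolution of $\mathbf{1}_{\mathbb{P}}$, apply Perron's formula to each factor, and control the resulting multiple sum over zeros of $\zeta$ by mean-value and zero-density estimates. This is a genuinely different route from the paper's, which adapts the Matom\"aki--Radziwi\l{}\l{} factorization method: restrict two of the three prime factors $p_1,p_2$ to carefully nested short ranges $[P_i,P_i^{1+\varepsilon}]$ (with $P_2=(\log\log X)^{6+10\sqrt\varepsilon}$, $P_1=(\log P_2)^{1/\varepsilon}$), pass to the mean square of the Dirichlet polynomial via Lemma~\ref{1}, factorize it into short prime polynomials by Lemma~\ref{6}, and then deploy Proposition~\ref{p1}/\ref{p2}/\ref{p3} (which use the improved mean value theorem, a large-values bound for prime-supported polynomials, Heath-Brown's decomposition, Watt's twisted fourth moment, and Hal\'asz--Montgomery). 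The restriction to thin prime ranges and the nearly lossless factorization are the entire engine of the paper's argument, and your proposal has none of that structure.

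The difficulty is that your central step---the bound $\mathrm{Var}(N)=o((\mathbb{E}N)^2)$---is asserted rather than proved, and the paper's own discussion gives strong evidence that your route cannot deliver it. The introduction records that the approach you describe (``reducing the problem to estimates for sums over the zeros of the Riemann zeta function,'' controlled by the known density hypothesis strips) was exactly Wolke's and Harman's, yielding $c=5\cdot10^6$, then $c=7+\varepsilon$, with nothing better known for $E_k$, $k\ge3$; and under the full density hypothesis Harman's method gives only $c=3+\varepsilon$ for $E_2$. Nothing in your sketch explains why the passage to three prime factors escapes the range $\tfrac12<\sigma<\sigma_0$ where density estimates are far from conjectural, nor why the ``two extra convolutions supply exactly the slack'' to reach length $(\log\log x)^{6+\varepsilon}\log x$; that is precisely the quantitative content that needs a proof and that the paper obtains by entirely other means. (Two smaller issues: after opening the variance you have a six-fold, not a triple, zero sum, since $|N(x)-\mathbb{E}N|^2$ brings in both copies; and your $\mathbb{E}N\sim\tfrac12(\log\log X)^{8+\varepsilon}$ comes from counting all $E_3$ numbers, whereas the paper's mean is only $\gg h/\log X$ because the prime ranges are deliberately constrained---so your larger mean does not help you, it reflects that you have not imposed the structure the factorization requires.) As written, then, the proposal contains a concrete gap: steps (i) and (ii), and the claim that the exponent $6+\varepsilon$ is the correct margin, would need a detailed argument that does not exist in the literature you invoke and that the author explicitly replaced by a different method.
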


\begin{theorem} \label{t2} For any integer $k\geq 4$, there exists $C_k>0$ such that almost all intervals $[x,x+(\log_{k-1} x)^{C_k}\log x]$ contain a product of exactly $k$ distinct primes. Here $\log_{\ell}$ is the $\ell$th iterated logarithm.\end{theorem}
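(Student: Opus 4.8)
The plan is to obtain Theorem~\ref{t2} by running the second-moment method behind Theorem~\ref{t1} for a denser auxiliary sequence, built by factoring out the $k-3$ smallest prime factors. Fix $k\geq 4$, write $L=L(x)=(\log_{k-1}x)^{C_k}\log x$ with $C_k$ a constant to be chosen, and put $D=D(x)=(\log_{k-3}x)^{A}$ for a suitable constant $A=A(k)$, so that $\log_2 D\sim\log_{k-1}x$. For $X<x\leq 2X$ define
\[
S(x)=\sum_{\substack{d\leq D\\ d\ \text{an}\ E_{k-3}\ \text{number}}}\#\bigl\{m\in(x/d,\,x/d+L/d]:\ m\ \text{an}\ E_3\ \text{number with}\ P^-(m)>P^+(d)\bigr\},
\]
where $P^-$ and $P^+$ denote smallest and largest prime factor. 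Every integer $n=dm$ counted by $S(x)$ lies in $(x,x+L]$ and is an $E_k$ number; moreover the $k-3$ smallest prime factors of such an $n$ are precisely those dividing $d$, so the factorisation $n=dm$ is unique and $S(x)$ is a genuine lower bound for the number of $E_k$ numbers in $(x,x+L]$. It therefore suffices to prove that $S(x)>0$ for all but $o(X)$ integers $x\in(X,2X]$, and then to sum over dyadic ranges.

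First I would compute the mean $M=\frac1X\sum_{X<x\leq 2X}S(x)$. Interchanging the order of summation reduces $M$ to $L$ times a sum over the admissible $d$ of counts of $E_3$ numbers free of prime factors below $P^+(d)$ in intervals of the shape $(X/d,2X/d]$, and a Sathe--Selberg type asymptotic together with the standard estimate $\sum_{d\leq D,\,d\in E_{k-3}}1/d\asymp(\log_2 D)^{k-3}$ yields $M=(1+o(1))L\Delta$ with $\Delta=\Delta(X)\asymp(\log_2 X)^2(\log_{k-1}X)^{k-3}/\log X$. In particular $M\to\infty$ for every fixed $C_k$, so the size of $C_k$ will be dictated solely by the variance estimate below. (The condition $P^-(m)>P^+(d)$ costs only a factor $1+o(1)$ in $\Delta$, and inserting it is the one local modification of the argument for Theorem~\ref{t1} that is needed.)

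The crux is the variance $V=\frac1X\sum_{X<x\leq 2X}(S(x)-M)^2=\frac1X\sum_x S(x)^2-M^2$, which I must show is $o(M^2)$; granted this, Chebyshev's inequality gives $\#\{x\in(X,2X]:S(x)=0\}\leq\#\{x:|S(x)-M|\geq M\}\leq VX/M^2=o(X)$, which completes the proof. Writing $f$ for the indicator of the special $E_k$ numbers counted by $S$, a standard rearrangement gives $\frac1X\sum_x S(x)^2=(1+o(1))M+\frac1X\sum_{0<|h|<L}(L-|h|)\sum_{X<n\leq 2X}f(n)f(n+h)$, the term $(1+o(1))M$ being the diagonal $h=0$ (which is $o(M^2)$ as $M\to\infty$); so $V=o(M^2)$ reduces to
\[
\frac1X\sum_{0<|h|<L}(L-|h|)\sum_{X<n\leq 2X}f(n)f(n+h)\ \leq\ (1+o(1))\,M^2 .
\]
This is a tight estimate -- the singular-series heuristic predicts asymptotic equality -- so not even a constant factor may be lost. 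The left-hand side is assembled from the additive correlations $\sum_n f(n)f(n+h)$ for $0<|h|<L$, i.e.\ from counts of solutions of $d_1m_1-d_2m_2=h$ with $m_1,m_2$ being $E_3$ numbers free of the small prime factors of $d_1,d_2$; fixing the divisors $d_1,d_2$, opening the $E_3$ conditions into sums over primes, and applying the circle method / a Dirichlet-polynomial mean value over a short interval (equivalently, a bound for a sum over the zeros of $\zeta$) reduces this to the type of estimate underlying the proof of Theorem~\ref{t1}. The gain over $k=3$ is twofold: the short sums over $d_1,d_2$ can be carried as type-I (divisor-like) factors, which relaxes the constraints in the mean value estimate; and the larger density $\Delta$ of this $E_k$ family yields a correspondingly larger main term against which to measure the error. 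Together these allow $L$ to be taken as short as $(\log_{k-1}x)^{C_k}\log x$ rather than of length $\asymp\log x$, as is needed for $k=3$.

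I expect the main obstacle to be exactly this tight off-diagonal estimate -- the bound on the additive correlations, averaged over all gaps $h\leq L$, holding uniformly over the large family of divisor pairs $d_1,d_2\leq D$. This is where the zero-density information enters, just as in Wolke's and Harman's treatments of the $k=3$ analogue but applied now to a denser sequence, and it is here that the admissible value of $C_k$ -- and, via $\log_2 D\sim\log_{k-1}x$, the appearance of the $(k-1)$st iterated logarithm -- are determined: $D$ must be large enough to supply enough divisors $d$ for the first moment, yet small enough that the off-diagonal does not overtake the main term, and optimising this tension against the constraint from the short-interval mean value estimate yields a permissible $L$ of exactly the stated shape. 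The remaining ingredients -- the Sathe--Selberg counting with a lower cut-off, propagating the condition $P^-(m)>P^+(d)$ through the argument, and the routine edge terms in the sums over $x$ -- present no difficulty.
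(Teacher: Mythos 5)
Your skeleton (first moment, variance bound, Chebyshev) is in essence the same as the paper's: the paper's Lemma 1 is precisely a Parseval-type identity expressing the variance $\frac1X\int_X^{2X}\bigl|\frac{1}{h_1}S_{h_1}(x)-\frac{1}{h_2}S_{h_2}(x)\bigr|^2\,dx$ through $\int|F(1+it)|^2\,dt$ for the associated Dirichlet polynomial, and your weighted off-diagonal sum over $0<|h|<L$ is exactly that variance written additively. Your idea of isolating the $k-3$ smallest prime factors also matches the paper's nested choice $P_j=(\log P_{j+1})^{\varepsilon^{-1}}$, which forces $P_1\asymp(\log_{k-1}X)^{\varepsilon^{-1}}$ and is indeed where the $(k-1)$st iterated logarithm in the interval length comes from.

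The gap is in the only hard step: how the variance is actually bounded. You propose to obtain the correlation estimate ``just as in Wolke's and Harman's treatments,'' i.e.\ from zero-density information / sums over zeros of $\zeta$, aided by the greater density of your sequence. This cannot work at interval length $(\log_{k-1}x)^{C_k}\log x$: the zero-density route gives $c=7+\varepsilon$ unconditionally and only $c=3+\varepsilon$ even under the full density hypothesis (the paper says so explicitly), and the obstruction is not the density of the sequence. After Parseval one must beat the mean value theorem for a length-$X$ Dirichlet polynomial on $[X^{0.01},X/h]$ by slightly more than one logarithm, and no zero-density input achieves this when $h$ is within iterated-logarithm factors of $\log x$. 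What the paper actually uses --- and what is absent from your proposal --- is the Matom\"aki--Radziwi\l{}\l{} mechanism: factor $F$ into prime-supported polynomials at the scales $P_1,\dots,P_{k-1}$, split $[T_0,T]$ into sets $\mathcal{T}_j$ according to which short prime polynomial is small, apply a lossless improved mean value theorem on $\mathcal{T}_1$, an amplification step $|M(1+it)|^2\le M^{-2\alpha_2}(P^{\alpha_1}|P(1+it)|)^{2\ell}$ (Proposition 2) on each intermediate $\mathcal{T}_j$ --- where the iterated-logarithmic spacing of the $P_j$ is precisely what keeps the $(\ell!)^{1+o(1)}$ losses harmless --- and Heath-Brown's identity, Watt's theorem, Hal\'asz--Montgomery and a large-values lemma for prime-supported polynomials on the residual set $\mathcal{T}$. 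Note also that your target $\le(1+o(1))M^2$ cannot be reached by bounding each correlation $\sum_nf(n)f(n+h)$ individually: an individual asymptotic is of twin-prime difficulty (the largest prime factor of $m$ is essentially a free prime variable), and upper-bound sieves lose a constant factor, which you correctly say you cannot afford; the averaging over $h$ and $x$ must be exploited jointly, which is exactly what the Dirichlet-polynomial formulation accomplishes.
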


\begin{theorem} \label{t3}\label{3} Almost all intervals $[x,x+\log^{3.51} x]$ with $x\leq X$ contain a product of exactly two distinct primes.\end{theorem}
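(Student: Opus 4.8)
The plan is to detect products of two distinct primes in $(x,x+h]$, $h=\log^{3.51}X$, through the factorisation $n=pm$ with $p$ prime, $m=n/p$ prime, and to run a second moment argument over $x\in[X,2X]$. First I would fix a parameter $P=X^{\theta}$ with $\theta<\tfrac12$ to be chosen later (so that $p<m$ automatically, whence $n=pm$ is a product of \emph{distinct} primes), set $N_P(x)=\sum_{x<pm\le x+h,\ p\sim P}\Lambda(m)$, and reduce the assertion ``almost all $(x,x+h]$ contain such an $n$'' --- by Chebyshev's inequality --- to two facts: (i) the mean $\tfrac1X\sum_{X\le x\le 2X}N_P(x)$ is $\asymp h/\log P$, which is immediate from Mertens' theorem (and the proper prime powers $m$ contribute to $N_P(x)$ for only $O(\sqrt{XP})=o(X)$ values of $x$, so a positive $N_P(x)$ does witness a genuine $E_2$ number for almost all $x$); and (ii) the variance $\sum_{X\le x\le 2X}\bigl(N_P(x)-\tfrac1X\sum_{X\le y\le 2X}N_P(y)\bigr)^2$ is $o\bigl(X\,(h/\log P)^2\bigr)$. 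All the difficulty is in (ii). The point of forcing one prime factor to have a fixed small size is that $m=n/p$ then lies in an interval of relative length $h/x\asymp h/X$ \emph{independent of $p$}, so that after the double average over $p\sim P$ and over $x$ there is enough room for cancellation --- the mechanism that makes primes accessible in almost all intervals $[y,y+y^{1/6+\varepsilon}]$ even though individual such intervals are not.

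For (ii) I would expand the square and invoke the standard dictionary --- Gallagher's lemma, or equivalently the explicit formula truncated at height $T:=(X/h)(\log X)^{O(1)}$ --- to bound the variance by a sum of $(\log X)^{O(1)}\,X\int_{T_0}^{T}|F(\tfrac12+it)|^2\,dt$ over finitely many Dirichlet polynomials $F$ of length $\asymp X$ built out of $P(s)=\sum_{p\sim P}p^{-s}$ and the coefficients of $\Lambda$ on $m\asymp X/P$; here $T_0$ is a fixed power of $\log X$. To make $F$ tractable I would apply Heath-Brown's identity (or Vaughan's identity) to $\Lambda(m)$, decomposing each $F$ into $O(1)$ products $P(s)\prod_j N_j(s)$ with $\prod_j|I_j|\asymp X/P$ and, in the Type~II pieces, at least one factor of a controlled length. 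Each integral is then estimated by combining the mean value theorem $\int_0^T|N(\tfrac12+it)|^2\,dt\ll(T+|I|)\|a\|_2^2$, the Halász--Montgomery large values inequality for the longest factor, and --- for $P(s)$ and any $\zeta$-type factors --- the fourth and twelfth power moments of $\zeta(\tfrac12+it)$, equivalently zero-density estimates $N(\sigma,T)\ll T^{\,c(\sigma)(1-\sigma)+\varepsilon}$. Optimising $\theta$ and the Type~II ranges against the known $c(\sigma)$ (Huxley's $c(\sigma)\le\tfrac{12}{5}$ in the crucial range, together with the wider density-hypothesis regions of Jutila and Bourgain), and against the $(\log X)^{O(1)}$ losses in the reduction, is precisely what yields the value $3.51$; the density hypothesis $c(\sigma)=2$ would give $3+\varepsilon$, recovering the conditional bound in the introduction, and it is a more efficient treatment of the zero sum, rather than a wider zero-density region, that lowers $c$ from Harman's value to $3.51$.

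The step I expect to be the main obstacle is the Type~II range in which $F$ is a genuine bilinear form $\sum_{d\sim D}\sum_{e\sim E}$ with $D$ and $E$ both moderate: there the mean value theorem loses a factor $\asymp T/X\asymp 1/h$, which is fatal since $h$ is only polylogarithmic, while $D$ and $E$ may be too small for Halász--Montgomery to win on its own. Getting past this needs (a) treating $P(s)$ as an extra free variable that can be merged with whichever of $d,e$ must be lengthened; (b) using the full twelfth-moment bound on the $\zeta$-heavy pieces rather than only the fourth moment; and (c) a Buchstab-type combinatorial decomposition, so that the genuinely unbalanced pieces --- for which no estimate suffices --- are shown to carry only a small proportion of the mass and are discarded with the right sign. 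This sieve-theoretic bookkeeping is what keeps $c$ close to the conditional value. The residual tasks --- confirming that the retained pieces outweigh the discarded ones, checking the mutual consistency of the various $\varepsilon$'s and of $T_0$, and disposing of the negligible set of $x$ coming from prime-power $m$ --- are routine but lengthy.
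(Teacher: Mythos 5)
The proposal has a fundamental gap: you fix the small prime factor to be $p\sim P=X^{\theta}$ for some $\theta<\frac12$, which is precisely the regime of Wolke's and Harman's arguments, and then estimate the resulting Dirichlet polynomial on $\Re(s)=\frac12$ via zero-density estimates and moments of $\zeta$. That route is known to give $c=7+\varepsilon$ (Harman), and with Bourgain's wider density-hypothesis region it can be pushed to about $6.86$, but it cannot reach $3.51$. Your sentence ``it is a more efficient treatment of the zero sum, rather than a wider zero-density region, that lowers $c$ from Harman's value to $3.51$'' is exactly what does not happen here: the gap between the known density exponents and the density hypothesis is far too large to bridge by bookkeeping, and the paper does not reduce Theorem~\ref{t3} to a zero sum at all.

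What the paper actually does is force the small prime into a \emph{polylogarithmic} window $P_1\le p_1\le P_1^{1+\varepsilon}$ with $P_1=\log^{2.51}X$. This is not a free choice: the reduction through Lemma~\ref{1} together with the improved mean value theorem (Lemma~\ref{2}) and the sieve estimate (Lemma~\ref{15}) gives a factor $\left(\frac{TP_1\log X}{X}+1\right)$ in the bound, so one needs $h\gg P_1\log X$, which with $h=\log^{3.51}X$ forces $P_1\le\log^{2.51}X$. Once $P_1$ is of this size, $P(s)=\sum_{p\sim P_1}p^{-s}$ is a genuinely \emph{short} Dirichlet polynomial, and the whole Matom\"aki--Radziwi\l{}\l{} machinery becomes available: the nearly lossless splitting of $F(s)$ into short intervals (Lemma~\ref{6}), the improved mean value theorem to recover a full logarithm, the large values estimate for short prime-supported polynomials (Lemma~\ref{7}), the iterated raising of $P(s)$ to high powers in Propositions~\ref{p2} and~\ref{p3}, Watt's twisted fourth moment, and Jutila's large values theorem with the exponent pair $(\tfrac1{126},\tfrac{20}{21})$. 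None of this applies to $P(s)$ of length $X^{\theta}$. In short: with $P=X^{\theta}$ you are running the old argument, not the one that proves $3.51$.

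Two secondary but still serious issues. First, your plan tolerates $(\log X)^{O(1)}$ losses in the variance reduction; the paper's whole difficulty is that the trivial mean value bound already misses the target by only a single logarithm, so it cannot afford losses of even $\log^{\varepsilon}X$ --- this is why Lemma~\ref{2}, Lemma~\ref{6} and Lemma~\ref{15} are tuned to be essentially lossless. Second, you state that the mean of $N_P(x)$ is $\asymp h/\log P$; with $P=X^{\theta}$ this is $\asymp h/\log X$, which happens to match the paper's normalisation only because $\log P\asymp\log X$, but it reflects that the setup you have chosen does not actually exploit anything special about a thin prime factor. The paper's mean is $\asymp\frac{h\log(1+\varepsilon)}{\log X\log P_1}$; the point is that the tiny window $[P_1,P_1^{1+\varepsilon}]$ is what makes the factorised $F(s)=P(s)K(s)$ amenable to the short-polynomial large-values theory. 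The correct architecture is: reduce to $\int_{T_0}^{T}|F(1+it)|^2\,dt$ on the $1$-line via Lemma~\ref{1}; Buchstab twice to split $\Sigma_1,\Sigma_2,\Sigma_3$; handle $\Sigma_1,\Sigma_2$ by Brun's sieve weights plus Watt (Proposition~\ref{p7}); split $\Sigma_3$ into Type~II (Proposition~\ref{p6} plus Jutila and exponent pairs), Type~I (Buchstab again plus Watt), and a residual Buchstab integral shown numerically to be $<1-\varepsilon$. Your proposal has the Heath-Brown, Watt, Halász--Montgomery and Buchstab ingredients, but plugged into the wrong reduction.
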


Theorems \ref{t1} and \ref{t2} are direct consequences of the following theorem.

\begin{theorem}\label{t4}
Let $X$ be large enough, $k\geq 3$ a fixed integer, and $\varepsilon>0$ small enough but fixed. Define the numbers $P_1,...,P_{k-1}$ by setting $P_{k-1}=(\log X)^{\varepsilon^{-2}},P_{k-2}=(\log \log X)^{6+10\sqrt{\varepsilon}}$ and $P_j=(\log P_{j+1})^{\varepsilon^{-1}}$ for $1\leq j\leq k-3$.
 For $P_1\log X\leq h \leq X$, we have
\begin{align}\label{eq15}
\left|\frac{1}{h}\sum_{\substack{x\leq p_1\dotsm p_k\leq x+h\\P_i\leq p_i\leq P_i^{1+\varepsilon},\,i\leq k-1}}1-\frac{1}{X}\sum_{\substack{X\leq p_1\dotsm p_k\leq 2X\\P_i\leq p_i\leq P_i^{1+\varepsilon},\,i\leq k-1}}1\right|\ll\frac{1}{(\log X)(\log_k X)}
\end{align} 
for almost all $x\leq X$.
\end{theorem}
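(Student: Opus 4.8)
The plan is to deduce \eqref{eq15} from a variance bound via Chebyshev's inequality, to reduce that variance to a mean value estimate for the Dirichlet polynomial attached to the relevant set, and then to exploit the multiplicative structure built into the set through the Matom\"aki--Radziwi\l\l{} circle of ideas. Write $\mathcal A$ for the set of integers $n=p_1\cdots p_k$ with $P_i\le p_i\le P_i^{1+\varepsilon}$ for $i\le k-1$, and put $A(s)=\sum_{X\le n\le 2X,\ n\in\mathcal A}n^{-s}$. Splitting $[1,X]$ dyadically (freezing the $P_i$ at negligible cost on each block), it suffices to prove, uniformly for $P_1\log X\le h\le X$, the second moment bound
\[
\frac1X\int_X^{2X}\left|\ \sum_{\substack{x\le n\le x+h\\ n\in\mathcal A}}1\ -\ \frac hX\sum_{\substack{X\le n\le 2X\\ n\in\mathcal A}}1\ \right|^2 dx\ =\ o\!\left(\frac{h^2}{(\log X)^2(\log_k X)^2}\right),
\]
since Chebyshev's inequality then forces \eqref{eq15} to hold for all but $o(X)$ integers $x\le X$; here one uses that the main term $\tfrac1X\sum_{X\le n\le 2X,\,n\in\mathcal A}1\asymp_{\varepsilon,k}(\log X)^{-1}$ exceeds the admissible error $(\log X)^{-1}(\log_k X)^{-1}$ by a factor of order $\log_k X$.

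By the standard Parseval-type identity for Dirichlet polynomials used by Matom\"aki and Radziwi\l\l, the left-hand side is controlled by an integral of $|A(\tfrac12+it)|^2$ over $|t|\le X/h$ (with suitable weights), the contribution of $|t|=O(1)$ being accounted for by the subtracted main term. For moderate $|t|$, say $1\le|t|\le\exp((\log X)^{c})$ with a small fixed $c>0$, the prime number theorem in the classical zero-free region gives a strong pointwise bound on the long prime factor of $A$, so this range is harmless. Thus everything comes down to bounding $\int|A(\tfrac12+it)|^2\,dt$ over $\exp((\log X)^{c})\le|t|\le X/h$ — which for $h$ near $P_1\log X$ is essentially all of $[\exp((\log X)^{c}),X^{1-o(1)}]$ — with the aim of beating the trivial mean value theorem bound there by more than a factor $(\log_k X)^{2+\delta}$ (in fact, as one will see, by a power of $\log X$).

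Here the structure enters. Because the ranges $[P_i,P_i^{1+\varepsilon}]$ lie far apart and $p_k$ is forced to be of size $X^{1-o(1)}$, one has up to a negligible error a factorisation
\[
A(s)\ \approx\ \Big(\ \prod_{i=1}^{k-1}Q_i(s)\Big)\,P(s),\qquad Q_i(s)=\!\!\sum_{P_i\le p\le P_i^{1+\varepsilon}}\!\!p^{-s},\qquad P(s)=\!\!\!\!\sum_{\substack{p\ \mathrm{prime}\\ p\asymp X/\prod_{i\le k-1}P_i}}\!\!\!\!p^{-s}.
\]
On a dyadic block $t\in[T,2T]$ I would keep from $P$ only the coefficient bound $\sum_p p^{-1}\asymp(\log X)^{-1}$ and apply a mean value theorem to it, extracting the whole saving from the short prime polynomials $Q_i$ by Hal\'asz-type large-value estimates obtained by raising each $Q_i$ to a power. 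The crucial point is that $P_{k-1}=(\log X)^{\varepsilon^{-2}}$ is a large enough power of $\log X$ that $Q_{k-1}$ can be raised to a power $j\asymp\varepsilon^2\log T/\log\log X$ with $(P_{k-1}^{1+\varepsilon})^{j}\le T$; the mean value theorem applied to $Q_{k-1}^{\,j}$ then shows that $|Q_{k-1}(\tfrac12+it)|$ is small outside a set of very small measure, gaining a factor that beats any fixed power of $\log\log X$ — in particular $(\log_k X)^{2+\delta}$ — and in fact enough to permit $h$ as short as $(\log X)^{1+o(1)}$. The remaining, smaller ranges, with $P_{k-2}=(\log\log X)^{6+10\sqrt\varepsilon}$ and $P_j=(\log P_{j+1})^{\varepsilon^{-1}}$, are treated identically and furnish the extra iterated-logarithmic savings that make the argument go through for every fixed $k\ge 3$ with the error $(\log X)^{-1}(\log_k X)^{-1}$.

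The step I expect to be the main obstacle is exactly this Dirichlet polynomial estimate. One must control $|t|$ all the way up to $X/h$, which for the shortest admissible $h$ is far beyond the classical zero-free region, so no pointwise bound on $P(\tfrac12+it)$ is available and the saving must come entirely from the prescribed small prime factors; one must therefore dovetail the mean value bound for $P$ with the large-value bounds for the $Q_i$ on each dyadic block, optimise the exponent $j$ against the loss incurred in passing from $Q_i$ to $Q_i^{\,j}$ (and hence against the length budget $T$), and sum the resulting estimates dyadically in $T$ so that the total stays below the target uniformly in $h$ and in $k$. It is precisely this zero-density-free handling of the long $t$-range — replacing the sums over zeros of $\zeta$ used by Wolke and Harman with mean values of Dirichlet polynomials assembled from the prescribed small prime factors — that produces the improvement.
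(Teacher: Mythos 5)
Your overall skeleton --- Parseval reduction to a mean square of the Dirichlet polynomial, factorization into the short prime polynomials $Q_i$ and a long prime polynomial $P$, and a decomposition of the $t$-range according to whether each $|Q_i(1+it)|$ is small or large (with largeness controlled by raising $Q_i$ to a power and applying a mean value theorem) --- does match the paper's strategy. But there are two genuine gaps. The first and most serious concerns the exceptional set $\mathcal{T}$ on which all the $Q_i$, in particular $Q_{k-1}$, are large. The large values estimate (Lemma \ref{7}) only shows this set has measure $\ll T^{2\alpha_{k-1}+o(1)}$, a positive \emph{power} of $X$ (about $X^{1/6}$ with the paper's choice $\alpha_{k-1}=\tfrac{1}{12}-\varepsilon$); it is nowhere near $o(1)$. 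On that set your plan --- ``keep from $P$ only the coefficient bound and apply a mean value theorem'' --- cannot work: the diagonal term of the mean value theorem (or of Hal\'asz--Montgomery) for the length-$\asymp X$ polynomial is already $\asymp 1/\log X$, i.e.\ exactly the trivial bound, and the trivial pointwise bound $|A(1+it)|\ll 1/\log X$ integrated over a set of measure $X^{1/6}$ is hopeless. Closing this case is the content of Proposition \ref{p3}: one must open up the long prime variable $p_k$ via Heath-Brown's identity, obtaining either prime-factored factors of medium length on which Vinogradov's zero-free region gives a pointwise saving $(\log X)^{-A}$ (Lemma \ref{17}), or long zeta sums handled by Watt's twisted fourth moment theorem; only then does Hal\'asz--Montgomery combined with the bound on $|\mathcal{T}'|$ suffice. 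None of this appears in your proposal, and without it the argument does not close.

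The second gap is on the ``good'' sets, where you also fall one full factor of $\log X$ short. For $T\le X/h$ the target is $o\bigl((\log X)^{-2}(\log_k X)^{-2}\bigr)$, while the plain mean value theorem with prime-supported coefficients gives only $\asymp(\log X)^{-1}$; and on the set $\mathcal{T}_1$ where only $Q_1$ is known to be small, the saving extracted from $Q_1$ is merely $P_1^{-O(\varepsilon)}=(\log_k X)^{-O(1)}$, so the $Q_i$ cannot supply the missing $\log X$. In the paper that logarithm comes from the \emph{improved} mean value theorem (Lemma \ref{2}) together with the sieve-theoretic fact that the coefficients are supported on integers free of small prime factors, so the off-diagonal correlation sums save $\log^{-2}$ rather than $\log^{-1}$ (Lemma \ref{15}), and from the nearly lossless factorization of Lemma \ref{6}. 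Relatedly, your description of the intermediate ranges as ``treated identically'' hides the actual mechanism of Proposition \ref{p2}: on $\mathcal{T}_j$ one multiplies by $(P_{j-1}^{\alpha_{j-1}}|Q_{j-1}|)^{2\ell}$ with $\ell=\lceil\log P_j/\log P_{j-1}\rceil$ to lengthen the polynomial before applying the improved mean value theorem, and the nested choice $P_j=(\log P_{j+1})^{\varepsilon^{-1}}$ exists precisely so that the resulting $(\ell!)^{1+o(1)}$ loss is affordable --- not merely to ``furnish extra iterated-logarithmic savings.''
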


In the theorem above, the average over the dyadic interval is $\gg \frac{1}{\log X}$ by the prime number theorem, so Theorems \ref{t1} and \ref{t2} indeed follow from Theorem \ref{t4}. Similarly, Theorem \ref{t3} is a direct consequence of the following.

\begin{theorem}\label{t5}
Let $X$ be large enough, $P_1=\log^a X$ with $a=2.51$, $\varepsilon>0$ fixed, and $P_1\log X\leq h\leq X$. We have 
\begin{align}\label{eq25}
\frac{1}{h}\sum_{\substack{x\leq p_1p_2\leq x+h\\ P_1\leq p_1<P_1^{1+\varepsilon}}}1\gg \frac{1}{X}\sum_{\substack{X\leq p_1p_2\leq 2X\\ P_1\leq p_1\leq P_1^{1+\varepsilon}}}1
\end{align}
for almost all $x\leq X$. 
\end{theorem}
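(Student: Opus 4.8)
The plan is to prove \eqref{eq25} by the variance method. Define
\begin{align*}
a(n)=\#\{p\mid n:\ P_1\le p<P_1^{1+\varepsilon},\ n/p\ \text{prime}\},
\end{align*}
so that the left-hand side of \eqref{eq25} equals $\tfrac1h\sum_{x<n\le x+h}a(n)$, and the Dirichlet series of $a$ factors as $A(s)=P(s)B(s)$, where $P(s)=\sum_{P_1\le p<P_1^{1+\varepsilon}}p^{-s}$ is a very \emph{short} polynomial, of length only $L:=P_1^{1+\varepsilon}=(\log X)^{a(1+\varepsilon)}$, and $B(s)=\sum_{q}q^{-s}$ runs over primes. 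By Mertens' theorem, $\tfrac1X\sum_{X<n\le 2X}a(n)\asymp_\varepsilon 1/\log X$, so by Chebyshev's inequality it suffices to prove the variance bound
\begin{align*}
\frac1X\int_X^{2X}\Bigl|\frac1h\sum_{x<n\le x+h}a(n)-\frac1X\sum_{X<n\le 2X}a(n)\Bigr|^2\,dx=o_\varepsilon\!\left(\frac{1}{(\log X)^2}\right):
\end{align*}
this forces the left-hand side of \eqref{eq25} to be $\ge\tfrac12\cdot\tfrac1X\sum_{X<n\le 2X}a(n)$ for all but $o(X)$ integers $x\le X$. Standard reductions let us assume $h\le X^{1/3}$, say — the range of larger $h$ being covered by Huxley-type results on primes in almost all short intervals — and to discard prime-power contributions.

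To estimate the variance I would use the explicit formula for the prime-counting function (after the routine step of replacing $B$ by a smoothed prime sum). Up to a main-term piece coming from $|t|\le(\log X)^{O(1)}$, absorbed by means of a classical zero-free region for $\zeta$, and a negligible tail, the variance is controlled by a weighted sum over the nontrivial zeros $\rho=\beta+i\gamma$ of $\zeta$:
\begin{align*}
\frac1{X^2}\sum_{|\gamma|\le X}|P(\rho)|^2\,X^{2\beta}\,\min\!\Bigl(h^2,\tfrac{X^2}{\gamma^2}\Bigr),
\end{align*}
the off-diagonal cross terms produced on squaring being absorbed by the standard zero-spacing bound $\sum_{\gamma'}(1+|\gamma-\gamma'|)^{-1}\ll\log(|\gamma|+2)$. (Equivalently, by a Parseval-type argument of Saffari--Vaughan / Matom\"aki--Radziwi\l\l{} type, this is a mean-value estimate for the Dirichlet polynomial $P(\tfrac12+it)B(\tfrac12+it)$ over $(\log X)^{O(1)}\le|t|\le X/h$.) It remains to show the displayed sum is $o_\varepsilon\bigl((\log X)^{-2}\bigr)$.

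This last estimate is the heart of the proof, and the main obstacle; it is where the exponent $3.51$ is produced. One exploits that $P$ is genuinely short and tame: $|P(\rho)|\ll_\varepsilon 1$ as soon as $1-\beta\ll(\log P_1)^{-1}$, and $\int_0^T|P(\tfrac12+it)|^2\,dt\ll_\varepsilon T$, so $P$ has bounded mean square. For $\beta$ close to $1$ (with $|\gamma|$ not too large) the Vinogradov--Korobov zero-free region and zero-density estimate make $X^{2\beta}$ super-polynomially small and settle that range with room to spare; for $|\gamma|>X/h$ one uses the decay factor $X^2/\gamma^2$ together with $\sum_{|\gamma|\le T}|P(\rho)|^2\ll_\varepsilon T(\log X)^{O(1)}$. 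The delicate range is $\tfrac12\le\beta\le 1-c$ with $|\gamma|\le X/h$: there one decomposes $B$ by a Heath--Brown type identity into Dirichlet convolutions of controlled lengths, bounds the resulting bilinear forms either by large-sieve/mean-value inequalities or by classical and Huxley--Jutila zero-density estimates (the density hypothesis being available for $\sigma\ge\tfrac{25}{32}$), and feeds in the Hal\'asz--Montgomery large-values method applied to the short factor $P$ — using crucially that its length is only $(\log X)^{O(1)}$ and its mean square is bounded — to control $\sum_{\beta\ge\sigma,\,|\gamma|\le X/h}|P(\rho)|^2$. Optimizing the resulting inequalities over the lengths in the decomposition, over $\sigma\in[\tfrac12,1]$, and over the size of $P_1$ is exactly what forces $P_1\le(\log X)^{2.51}$, i.e.\ $h\ge(\log X)^{3.51}$. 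With the weighted sum over zeros so bounded, Chebyshev's inequality yields Theorem \ref{t5}.
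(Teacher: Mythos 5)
There is a genuine gap, and it sits exactly where you wave your hands: the claim that the zero-sum (equivalently, the mean value of $P(\tfrac12+it)B(\tfrac12+it)$) can be bounded by $o((\log X)^{-2})$ once $P_1\le(\log X)^{2.51}$ by ``optimizing'' zero-density estimates, Heath--Brown decompositions and Hal\'asz--Montgomery. This is essentially Harman's 1982 argument, and it is known to produce only $c=7+\varepsilon$ unconditionally (about $6.86$ with Bourgain's density-hypothesis region $\sigma\ge\tfrac{25}{32}$); even assuming the full density hypothesis it gives only $c=3+\varepsilon$. The delicate range $\tfrac12\le\beta\le 1-c$, $|\gamma|\le X/h$ cannot be handled by the tools you list at the strength you need: outside $\sigma\ge\tfrac{25}{32}$ the available zero-density estimates are far too weak, and the short factor $P$ contributes no power saving, only a factor $(\log X)^{O(1)}$. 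So the asserted optimization does not close, and no value of $a$ below roughly $6$ comes out of this route.

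There is a second, structural problem: you aim to prove the full asymptotic, i.e.\ that the variance of $\tfrac1h\sum_{x<n\le x+h}a(n)$ is $o((\log X)^{-2})$. The paper does not prove this and the theorem is stated with $\gg$ rather than $\sim$ for a reason. The actual proof applies Buchstab's identity twice to split $S_h(x)$ into pieces $\Sigma_1,\Sigma_2,\Sigma_3$; it proves asymptotics only for $\Sigma_1$, $\Sigma_2$ and certain subsums of $\Sigma_3$ (type I sums via Brun sieve weights plus Watt's twisted fourth moment of zeta sums, and type II sums via Jutila's large values theorem, exponent pairs, and a large-values lemma exploiting that $P$ is supported on primes in a short interval), and it \emph{discards} the remaining subsum by positivity, checking numerically that the corresponding Buchstab integrals satisfy $J_1+J_2<1$. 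It also works on the $1$-line with a Matom\"aki--Radziwi\l{}\l{}-type factorization into short intervals and an improved mean value theorem that saves an extra logarithm; none of these ingredients appears in your plan, and without them (in particular without Watt's theorem and without the sieve decomposition that lets one give up on part of the sum) the exponent $3.51$ is out of reach.
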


\begin{remark}
Since $h\geq P_1\log X$, we have the dependence $c=a+1$ between the exponent $a$ in Theorem \ref{t5} and the smallest exponent $c$ for which we can show that the interval $[x,x+\log^c x]$ contains an $E_2$ number almost always. 
\end{remark}

\begin{remark}
Note that Theorems \ref{t4} and \ref{t5} tell us that there are $\gg \frac{h}{\log X}$ $E_k$ numbers in almost all intervals $[x,x+h]$, where $h$ and $k$ are as in one of the theorems. However, we are not quite able to find $E_k$ numbers on intervals $[x,x+\psi(x)\log x]$ with $\psi$ tending to infinity arbitrarily slowly, unlike in the result of Friedlander and Iwaniec on $P_k$ numbers. In addition, our bound for the number of exceptional values is at best $\ll \frac{x}{\log^{\varepsilon}x}$ and often weaker, while the methods used in \cite{harman-sieves}, \cite{jia} and \cite{watt-primes} for primes in almost all short intervals have a tendency to give the bound $\ll \frac{x}{\log^{A}x}$ for any $A>0$, when they work. The limit of our method for $E_2$ numbers is the exponent $3+\varepsilon$, as will be seen later, so proving for example unconditionally the analog of Selberg's result for $E_2$ numbers would require some further ideas.
\end{remark}

To prove our results, we adapt the ideas of the paper \cite{matomaki} of Matom\"aki and Radziwi\l{}\l{} on multiplicative functions in short intervals to considering almost primes in short intervals. In that paper, a groundbreaking result is that for any multiplicative function, with values in $[-1,1]$, its average over $[x,x+h]$ is almost always asymptotically equal to its dyadic average over $[x,2x]$, with $h=h(x)\leq x$ any function tending to infinity. The error terms obtained there for general multiplicative functions are not quite good enough for our purposes. Nevertheless, using similar techniques, and replacing the multiplicative function with the indicator function of the numbers $p_1\dotsm p_k$, with $p_i$ primes from carefully chosen intervals, allows us to find $E_k$ numbers on intervals $[x,x+h]$, with $\frac{h}{\log x}$ growing very slowly. In this setting, we can apply various mean, large and pointwise value results for Dirichlet polynomials, some of which work specifically with primes or the zeta function, but not with general multiplicative functions (such as Watt's theorem on the twisted moment of the Riemann zeta function, a large values theorem from \cite{matomaki} for Dirichlet polynomials supported on primes, and Vinogradov's zero-free region). In many places in the argument, we cannot afford to lose even factors of $\log^{\varepsilon}x$, so we need to factorize Dirichlet polynomials in a manner that is nearly nearly lossless, and use an improved form of the mean value theorem for Dirichlet polynomials. To deal with some of the arising Dirichlet polynomials, we also need some sieve methods, similar to those that have been successfully applied to finding primes in short intervals for example in \cite{harman-sieves}, \cite{jia} and \cite{watt-primes}. In the case of $E_2$ numbers, in addition to these methods, we benefit from the theory of exponent pairs and Jutila's large values theorem.\\

The structure of the proofs of Theorems \ref{t4} and \ref{t5} is the following. We will first present the lemmas necessary for proving Theorem \ref{t4}, and hence Theorems \ref{t1} and \ref{t2}. Besides employing these lemmas to prove Theorem \ref{t4}, we notice that they are already sufficient for finding products of exactly two primes in almost all intervals $[x,x+\log^{5+\varepsilon} x]$, which is as good as Mikawa's result for $P_2$ numbers up to $\varepsilon$ in the exponent (one could also get $c$ slightly below $5$ using exponent pairs, which are just one of the additional ideas required for Theorem \ref{t5}). The rest of the paper is then concerned with reducing the exponent $5+\varepsilon$ to $3.51$ for products of two primes, and this requires some further ingredients, as well as all the lemmas that were needed for products of three or more primes.

\subsection{Acknowledgements}

The author is grateful to his supervisor Kaisa Matom\"aki for various useful comments and discussions. The author thanks the referee for careful reading of the paper and for useful comments. While working on this project, the author was supported by the Vilho, Yrj\"o and Kalle Vais\"al\"a foundation of the Finnish Academy of Science and Letters.

\subsection{Notation}\label{subsec:notation}

The symbols $p,q,p_i$ and $q_i$ are reserved for primes, and $d,k,\ell, m$ and $n$ are always positive integers. We often use the same capital letter for a Dirichlet polynomial and its length. We call \textit{zeta sums} partial sums of $\zeta(s)$ or $\zeta'(s)$ of the form $\sum_{n\sim N}n^{-s}$ or $\sum_{n\sim N}(\log n)n^{-s}$. \\

The function $\nu(\cdot)$ counts the number of distinct prime divisors of a number, $\mu(\cdot)$ is the Möbius function, $\Lambda(\cdot)$ is the von Mangoldt function, and $d_r(m)$ is the number of solutions to $a_1\dotsm a_r=m$ in positive integers. The function $\omega(\cdot)$ is Buchstab's function (see Harman's book \cite[Chapter 1]{harman-sieves}), defined as $\omega(u)=\frac{1}{u}$ for $1\leq u\leq 2$ and via the differential equation $\frac{d}{du}(u\omega(u))=\omega(u-1)$ for $u>2$, imposing the requirement that $\omega$ be continuous on $[1,\infty)$. We make the convention that $\omega(u)=0$ for $u<1$. In addition, let $\mathcal{P}(z)=\prod_{p<z}p$, and let $S(A,\mathbb{P},z)$ count the numbers in $A$ coprime to $\mathcal{P}(z)$.\\

The quantity $\varepsilon>0$ is always small enough but fixed. The symbols $C_1,C_2,...$ denote unspecified, positive, absolute constants. By writing $n\sim X$ in a summation, we mean $X\leq n<2X$. The expression $1_S$ is the indicator function of the set $S$, so that $1_S(n)=1$ if $n\in S$ and $1_S(n)=0$ otherwise. We use the usual Landau and Vinogradov asymptotic notation $o(\cdot), O(\cdot)$ and $\ll, \gg$. The notation $X\asymp Y$ is shorthand for $X\ll Y\ll X$.

\section{Preliminary lemmas}

\subsection{Reduction to mean values of Dirichlet polynomials}\label{subsec:reduction}

We present several lemmas that are required for proving both Theorems \ref{t4} and \ref{t5}. Later on, we give some additional lemmas that are needed only for proving Theorem \ref{t5}.\\

The plan of the proofs of Theorems \ref{t4} and \ref{t5}, and hence of Theorems \ref{t1}, \ref{t2} and \ref{t3}, is to transform the problem of comparing almost primes in short and long intervals to finding cancellation in the mean square of the corresponding Dirichlet polynomial. The polynomial can be factorized after it is divided into short intervals, and different methods can be applied to different factors. This approach is utilized in many earlier works on primes and almost primes in short intervals; see e.g. \cite{harman-sieves}, \cite{matomaki}. We then apply several mean, large and pointwise value theorems, which are presented in Subsection \ref{subsec:Dirichlet bounds}, to find the desired cancellation in the Dirichlet polynomial.\\

The following Parseval-type lemma allows us to reduce the problem of finding almost primes in short intervals to finding cancellation in a Dirichlet polynomial.

\begin{lemma}\label{1} Let
\begin{align*}
S_h(x)=\frac{1}{h}\sum_{x\leq n\leq x+h}a_n,
\end{align*}
where $a_n$ are complex numbers, and let $2\leq h_1\leq h_2\leq \frac{X}{T_0^3}$ with $T_0\geq 1$. Also let $F(s)=\sum_{n\sim X}\frac{a_n}{n^s}$. Then 
\begin{align}\label{eq17}
\frac{1}{X}\int_{X}^{2X}\left|\frac{1}{h_1}S_{h_1}(x)-\frac{1}{h_2}S_{h_2}(x)\right|^2 dx&\ll \frac{1}{T_0}+\int_{T_0}^{\frac{X}{h_1}}|F(1+it)|^2 dt\nonumber\\
&+\max_{T\geq \frac{X}{h_1}}\frac{X}{Th_1}\int_{T}^{2T}|F(1+it)|^2 dt.
\end{align}
\end{lemma}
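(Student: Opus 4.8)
The plan is to start from the elementary identity
\[
\frac{1}{h}\sum_{x\le n\le x+h}a_n = \int_0^1 \Big(\sum_{n\sim X} a_n\, e\big(n\beta\big)\cdot \mathbf{1}_{[x,x+h]}\text{-type kernel}\Big)\dots
\]
—i.e., to express the short-interval average $S_h(x)/h$ via a smoothing and then pass to Dirichlet polynomials by Parseval/Plancherel. More precisely, I would introduce the weighted sum $\psi_h(x)=\sum_{n}a_n w\big((n-x)/h\big)$ for a suitable smooth (or sharp) cutoff and relate $\frac1{h_1}S_{h_1}-\frac1{h_2}S_{h_2}$ to $\int_{-\infty}^\infty F(1+it)\,\widehat K(t)\,X^{it}\,dt$ along the line $\mathrm{Re}(s)=1$, where $\widehat K(t)$ is the Mellin transform of the difference of the two normalized box kernels of lengths $h_1,h_2$. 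The point of working on the $1$-line (rather than $\tfrac12$) is that $n^{-1}$ makes $F(1+it)$ a genuine Dirichlet polynomial of the $a_n/n$, and the normalization by $1/h$ in $S_h$ matches the $1/|t|$-type decay of the kernel transforms.

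The key steps, in order: (1) Write $\frac1{h}S_h(x) = \frac{1}{2\pi}\int_{-\infty}^\infty F(1+it)\, \frac{(x+h)^{it}-x^{it}}{ith}\, dt$ up to negligible error, using Perron/Mellin inversion on $[X,2X]$ and the fact that all $n$ lie in $\sim X$; here $\frac{(x+h)^{it}-x^{it}}{ith}=x^{it}\cdot\frac{e(t\log(1+h/x))-1}{it h}$, whose modulus is $\ll \min(1,1/(|t|h))$ and in fact $\ll 1/(|t|h)$ for $|t|$ not too small. (2) Subtract the two expressions for $h_1$ and $h_2$: the difference of kernels is $\ll \min(1, 1/(|t|h_1))$ and, crucially, is $O(|t| h_2 / \text{stuff})$–bounded for small $|t|$ so that the $|t|\le T_0$ range contributes $\ll 1/T_0$ after Cauchy–Schwarz against the trivial bound $\int_{|t|\le T_0}|F(1+it)|^2\,dt \ll \log X$ — wait, one must be slightly careful; the cleaner route is to bound the small-$t$ part by $T_0^{-1}$ directly using $h_2\le X/T_0^3$, which makes $\log(1+h_2/x)$ small and the two kernels agree to within $O(1/T_0)$ on $|t|\le T_0$. (3) Square, integrate over $x\in[X,2X]$, and expand; the diagonal/off-diagonal analysis of $\int_X^{2X} x^{i(t-t')}\,dx$ gives an essentially diagonal contribution, reducing $\frac1X\int_X^{2X}|\cdots|^2dx$ to $\int_{|t|>T_0} |F(1+it)|^2 \min(1,1/(|t|h_1))^2\,dt$ plus the $1/T_0$ term. (4) Split that integral at $|t|=X/h_1$: for $T_0<|t|\le X/h_1$ bound $\min(1,1/(|t|h_1))^2\le 1$ to get $\int_{T_0}^{X/h_1}|F(1+it)|^2\,dt$; for $|t|>X/h_1$ use $1/(|t|h_1)^2$ and a dyadic decomposition $|t|\sim T$, bounding $\int_{T}^{2T}|F(1+it)|^2\,dt/(Th_1)^2 \cdot T \ll \frac{X}{Th_1}\cdot \frac{1}{Th_1}\int_T^{2T}|F|^2\,$ — and since $T\ge X/h_1$ the extra factor $\frac{1}{Th_1}\le \frac{1}{X}$ is absorbed, leaving $\max_{T\ge X/h_1}\frac{X}{Th_1}\int_T^{2T}|F(1+it)|^2\,dt$ after summing the geometric series in $T$.

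**Main obstacle.**
The delicate part is step (2)–(3): controlling the low-frequency range $|t|\le T_0$ and making the passage from the sharp cutoffs $\mathbf 1_{[x,x+h]}$ to clean Mellin kernels without losing more than $O(1/T_0)$. A sharp box has Mellin transform decaying only like $1/|t|$, so one cannot simply truncate the $t$-integral at $X/h$; the argument must exploit that the \emph{difference} of the two normalized boxes has a transform that is both $O(\min(1,1/(|t|h_1)))$ for large $|t|$ \emph{and} $O(|t|h_2/X)=O(1/T_0^2)$-small for $|t|\le T_0$ (using $h_2\le X/T_0^3$ and $\log(1+h_2/x)\asymp h_2/x$), so that Cauchy–Schwarz against $\int_{|t|\le T_0}|F(1+it)|^2\,dt\ll 1$ yields the $1/T_0$ term. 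Getting these kernel bounds cleanly — and verifying that the off-diagonal terms in the $x$-integration genuinely are lower order given $h_1\le h_2\le X/T_0^3$ — is where the real work lies; everything else is bookkeeping with dyadic decompositions.
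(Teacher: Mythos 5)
The paper itself gives no proof of this lemma; it is a one-line citation to Lemma~14 of Matom\"aki--Radziwi\l{}\l{}, so there is no in-paper argument to compare against. Your outline --- Perron on the $1$-line, the kernel $\frac1h\int_x^{x+h}u^{it}\,du$, split the frequency range at $T_0$ and at $X/h_1$, use $h_2\le X/T_0^3$ to control the low frequencies --- is essentially the standard proof of such Parseval-type lemmas and matches the cited result in spirit. Two corrections first. The Perron kernel on the $1$-line is $\frac{(x+h)^{1+it}-x^{1+it}}{(1+it)h}$, not $\frac{(x+h)^{it}-x^{it}}{ith}$; its modulus is $\ll\min\{1,\,X/(|t|h)\}$, with the factor of $X$, and this propagates into your step (4), where the quantity absorbed by the geometric series should be $X/(Th_1)\le 1$, not $1/(Th_1)$. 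Also, the bound is not homogeneous under $a_n\mapsto\lambda a_n$, so the $1/T_0$ term is only coherent with a normalisation such as $|a_n|\le 1$ (hence $|F(1+it)|\ll 1$); the paper's statement suppresses this hypothesis, but its applications all have $a_n\in\{0,1\}$.

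The genuine gap is step (3). If you expand the square over $x\in[X,2X]$ and use only the size bound $\bigl|\frac1X\int_X^{2X}x^{i(t-t')}\,dx\bigr|\ll\min(1,1/|t-t'|)$, the $t'$-integration costs a logarithm that the right-hand side of \eqref{eq17} cannot absorb. The argument must keep the phases, i.e.\ the step must be a genuine Plancherel identity, as you gesture at in your opening sentence but then abandon. Concretely: extend the $x$-integral to $(0,\infty)$ against $dx/x$ (harmless since $a_n$ is supported on $n\sim X$), write $\frac1h\sum_{x<n\le x+h}a_n=\frac1h\sum_n a_n\mathbf 1_{[n-h,n)}(x)$, and apply Mellin--Plancherel exactly; the Mellin transform on the $\Re s=0$ line is $\sum_n\frac{a_n}{n^{1-it}}\cdot\frac{n\bigl(1-(1-h/n)^{it}\bigr)}{ith}$. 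The second factor is $1+O(|t|h/X)$ uniformly for $n\sim X$, hence $O(1/T_0^2)$ for $|t|\le T_0$ because $h\le X/T_0^3$ (this is where the $1/T_0$ term comes from), and it is $\ll\min\{1,X/(|t|h)\}$ in general, producing the two remaining terms after your split at $X/h_1$. So the blueprint is right, but the ``diagonal/off-diagonal in $x$'' bookkeeping, as described, is lossy and needs to be replaced by the exact Plancherel computation with no cross-terms.
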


\begin{proof} This is Lemma 14 in the paper \cite{matomaki} (except that we do not specify the value of $T_0$). A related bound can be found for example in \cite[Chapter 9]{harman-sieves}.\end{proof}

We choose $T_0=X^{0.01}$, and $h_2=\frac{X}{T_0^3}$ in Lemma \ref{1}, and the average function $S_h(x)$ is given by the short average in \eqref{eq15} or \eqref{eq25}. Now, defining
\begin{align*}
F(s)=\sum_{\substack{p_1\dotsm p_k\sim X\\P_i\leq p_i\leq  P_i^{1+\varepsilon},i\leq k-1}}(p_1\dotsm p_k)^{-s},
\end{align*}
where $P_i$ are as in Theorem \ref{t4} or \ref{t5}, proving Theorems \ref{t4} and \ref{t5} is reduced to showing that
\begin{align}\label{eq24}
\int_{T_0}^{T}|F(1+it)|^2 dt=o\left(\left(\frac{Th}{X}+1\right)\cdot \frac{1}{(\log^2 X)(\log_ {k} X)^{2}}\right),
\end{align}
for $T_0=X^{0.01}$ and $h\geq P_1\log X$. Indeed, substituting this to Lemma \ref{1} shows that 
\begin{align*}
\frac{1}{X}\int_{X}^{2X}\left|\frac{1}{h}S_{h}(x)-\frac{1}{h_2}S_{h_2}(x)\right|^2dx=o\left(\frac{1}{(\log^2 X)(\log _k X)^{2}}\right),
\end{align*}
where $h_2=\frac{X}{T_0^3}$. It actually suffices to prove \eqref{eq24} for $T\leq X$, since otherwise the mean value theorem (Lemma \ref{12}) gives a good enough bound for the last term in \eqref{eq17}.\\

Note that for $T\leq X$ the trivial bound for the integral in \eqref{eq24}, coming from the mean value theorem, is $\ll (\log X)^{-1}$. Thus our task is to save slightly more than one additional logarithm in this integral (for $T\leq \frac{X}{h}$, at least). \\

Once the required estimates for Dirichlet polynomials have been established, we can apply the prime number theorem in short intervals with Vinogradov's error term (see \cite[Chapter 10]{iwaniec-kowalski}) to see that
\begin{align*}
\frac{1}{h_2}S_{h_2}(x)-\frac{1}{X}S_X(X)\ll \exp(-(\log
X)^{\frac{3}{5}-\varepsilon}),
\end{align*}
for $h_2=x^{0.97},x\sim  X$, and hence deduce Theorems \ref{t4} and \ref{t5} (and consequently \ref{t1}, \ref{t2} and \ref{t3}). For example, we compute
\begin{align*}
\frac{1}{h_2}\sum_{\substack{x\leq p_1p_2p_3\leq x+h_2\\P_1\leq p_1\leq P_1^{1+\varepsilon}\\P_2\leq p_2\leq P_2^{1+\varepsilon}}}1&=\frac{1}{h_2}\sum_{\substack{P_1\leq p_1\leq P_1^{1+\varepsilon}\\P_2\leq p_2\leq P_2^{1+\varepsilon}}}\left(\pi\left(\frac{x+h_2}{p_1p_2}\right)-\pi\left(\frac{x}{p_1p_2}\right)\right)\\
&=\frac{1}{h_2}\sum_{\substack{P_1\leq p_1\leq P_1^{1+\varepsilon}\\P_2\leq p_2\leq P_2^{1+\varepsilon}}}\frac{h_2}{p_1p_2\log \frac{x}{p_1p_2}}\\
&\quad+O\left(\exp(-(\log x)^{\frac{3}{5}-\frac{\varepsilon}{2}})\right)\\
&=\sum_{P_1\leq p_1\leq P_1^{1+\varepsilon}\atop P_2\leq p_2\leq P_2^{1+\varepsilon}}\frac{1}{p_1p_2\log \frac{X}{p_1p_2}}+O(\exp(-(\log X)^{\frac{3}{5}-\varepsilon})),
\end{align*}
and the same asymptotics hold for the dyadic sum. Sometimes we end up comparing the sums $\frac{1}{h_2}S_{h_2}(x)$ and $\frac{1}{x}S_2(x)$ with $a_n$ not quite equal to the coefficients of $F(s)$, but equal to the indicator function of the numbers $p_1p_2n$ with $p_1$ and $p_2$ from the intervals $[P_1,P_1^{1+\varepsilon}]$ and $[P_2,P_2^{1+\varepsilon}],$ respectively, and $n$ having no prime factors smaller than $p_2$. There may also be a simple cross-conditions on $p_1$ and $p_2$, but comparing the sums still causes no difficulty.\\

Thus, in the rest of the paper we can concentrate on bounding Dirichlet polynomials. Although there is a close analogy in the formulations of Theorems \ref{t4} and \ref{t5}, estimating the polynomial arising from the latter is more difficult, and will require several additional ideas.

\subsection{Factorizations for Dirichlet polynomials}

In bounding Dirichlet polynomials, factorizations play an important role. We encounter situations where the only cross-condition on the variables in the polynomial is that their product belongs to a certain range, so the variables can be separated by diving them into short ranges and estimating the mean values of the resulting polynomials. The factorization is provided by the following lemma, which also takes into account the improved mean value theorem (Lemma \ref{2}).

\begin{lemma} \label{6}Let $\mathcal{S}\subset [-T,T]$ be measurable and 
\begin{align*}
F(s)=\sum_{\substack{mn\sim X\\M\leq m\leq M'}}\frac{a_mb_n}{(mn)^s}
\end{align*}
for some $M'>M\geq 2$ and for some complex numbers $a_m,b_n$. Let $H\geq 1$ be such that $H\log M$ and $H\log M'$ are integers. Denote
\begin{align*}
A_{v,H}(s)=\sum_{e^{\frac{v}{H}}\leq m<e^{\frac{v+1}{H}}}\frac{a_m}{m^s},\quad B_{v,H}(s)=\sum_{n\sim Xe^{-\frac{v}{H}}}\frac{b_n}{n^s}.
\end{align*}
Then
\begin{align*}
\int_{\mathcal{S}}|F(1+it)|^2dt&\ll |I|^2\int_{\mathcal{S}}|A_{v_0,H}(1+it)B_{v_0,H}(1+it)|^2dt\\
&+T\sum_{\substack{n\in [Xe^{-\frac{1}{H}},Xe^{\frac{1}{H}}]\,\, \text{or}\\ n\in [2X,2Xe^{\frac{1}{H}}]}}|c_n|^2+T\sum_{1\leq h\leq \frac{2X}{T}}\sum_{\substack{m-n=h\atop m,n\in [Xe^{-\frac{1}{H}},Xe^{\frac{1}{H}}]\,\, \text{or}\\ m,n\in [2X,2Xe^{\frac{1}{H}}]\\}}|c_m||c_n|,
\end{align*}
with
\begin{align*}
c_n=\frac{1}{n}\sum_{n=k\ell\atop M\leq k\leq M'}|a_kb_{\ell}|,
\end{align*}
$I=[H\log M,H\log M')$ and $v_0\in I$ a suitable integer.\end{lemma}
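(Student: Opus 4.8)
The plan is to expand the square $|F(1+it)|^2$ and rearrange it into a form where the only cross-condition is that the product $mn$ lies in a dyadic range; then to split the $m$-variable into the short multiplicative blocks $[e^{v/H},e^{(v+1)/H})$ indexed by $v\in I=[H\log M,H\log M')$, pick out the worst block $v_0$ by a pigeonhole/maximum argument, and control the error created by the fact that changing the length of the $m$-block slightly perturbs the range of $mn$. Concretely, write $F(1+it)=\sum_{v\in I}A_{v,H}(1+it)\tilde B_{v,H}(1+it)$, where $\tilde B_{v,H}$ is $B_{v,H}$ restricted to those $n$ making $mn\sim X$; by the triangle inequality and Cauchy--Schwarz in the $v$-sum (which has length $|I|$), we get $\int_{\mathcal S}|F(1+it)|^2\,dt\ll |I|\sum_{v\in I}\int_{\mathcal S}|A_{v,H}(1+it)\tilde B_{v,H}(1+it)|^2\,dt \ll |I|^2\max_{v}\int_{\mathcal S}|A_{v,H}\tilde B_{v,H}(1+it)|^2\,dt$, and we let $v_0$ be the maximizing block. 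This already produces the first term, except that $\tilde B_{v_0,H}$ is not quite the clean polynomial $B_{v_0,H}(s)=\sum_{n\sim Xe^{-v_0/H}}b_n n^{-s}$: the constraint $mn\sim X$ with $m$ ranging over a block of multiplicative width $e^{1/H}$ forces $n$ into an interval that differs from $[Xe^{-v_0/H},2Xe^{-v_0/H})$ only near the two endpoints.

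So the second and third terms are exactly the cost of replacing $\tilde B_{v_0,H}$ by $B_{v_0,H}$. The difference $\tilde B_{v_0,H}-B_{v_0,H}$ is a Dirichlet polynomial whose coefficients are supported on $n$ in the two short ``fringe'' intervals $[Xe^{-1/H},Xe^{1/H}]$ and $[2X,2Xe^{1/H}]$ (after absorbing the $m$-block into the coefficient, so the coefficient at $n$ becomes something like $c_n=\tfrac1n\sum_{n=k\ell,\,M\le k\le M'}|a_k b_\ell|$, the natural majorant once we drop oscillation). For such a short polynomial one applies the mean value theorem for Dirichlet polynomials on the interval $[-T,T]\supset\mathcal S$ — more precisely its standard consequence, the inequality $\int_{-T}^{T}|\sum_n c_n n^{-it}|^2\,dt\ll \sum_n |c_n|^2(T + n)+ T\sum_{1\le h}\,\sum_{m-n=h}|c_m||c_n|$, or its refined form from Lemma~\ref{2}; here the diagonal gives $T\sum_n|c_n|^2$ over the fringe ranges and the off-diagonal gives the displayed sum over $1\le h\le 2X/T$ of $|c_m||c_n|$ with $m-n=h$ and $m,n$ both in a fringe interval. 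Combining the block decomposition with this estimate for the fringe correction, via $|F|^2\ll 2|A_{v_0,H}B_{v_0,H}|^2 + 2|\text{fringe}|^2$ inside the already-extracted worst block, yields the claimed bound.

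The only genuinely delicate point is the bookkeeping at the endpoints: one must check that, as $m$ runs over the block $[e^{v_0/H},e^{(v_0+1)/H})$, the set of $n$ with $X\le mn<2X$ that get added to or removed from $[Xe^{-v_0/H},2Xe^{-v_0/H})$ really does lie inside $[Xe^{-1/H},Xe^{1/H}]\cup[2X,2Xe^{1/H}]$ (up to the obvious rescaling by $e^{-v_0/H}$ that is absorbed into the statement's normalization) — this is just the elementary inequality $|e^{v_0/H}-e^{(v_0+1)/H}|\le e^{1/H}-1$ times the endpoint, but it has to be done carefully so that the integrality hypotheses on $H\log M$ and $H\log M'$ are used to ensure $I$ is a genuine interval of integers and the blocks tile $[M,M')$ exactly. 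I expect this endpoint analysis, rather than any analytic input, to be the main obstacle; once it is in place the mean value theorem and Cauchy--Schwarz are routine. One should also note that the factor $|I|^2$ is harmless because in all our applications $H$ is a small power of $\log X$ and $\log(M'/M)$ is $O(\log\log X)$ or smaller, so $|I|\ll (\log X)^{o(1)}$.
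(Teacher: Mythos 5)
Your decomposition and ingredients are the right ones, but the \emph{order of operations} undermines the stated bound. You apply Cauchy--Schwarz over the $v$-sum and extract the worst block $v_0$ \emph{before} splitting off the fringe error, which makes the fringe cost inherit the $|I|^2$ factor that Cauchy--Schwarz introduces. Concretely, your chain reads
\[
\int_{\mathcal{S}}|F|^2\,dt\ \ll\ |I|^2\int_{\mathcal{S}}|A_{v_0,H}\tilde B_{v_0,H}|^2\,dt\ \ll\ |I|^2\int_{\mathcal{S}}|A_{v_0,H}B_{v_0,H}|^2\,dt+|I|^2\cdot(\text{fringe MVT}),
\]
which is weaker than the lemma by a factor $|I|^2$ on the second and third terms. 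The lemma deliberately has no such factor there, and this matters downstream: in the proof of Proposition~\ref{p4}, the fringe contributes $\ll\frac{T}{HX\log X}+\frac{1}{H\log^2 X}$ against a target of $(\frac{TP_1\log X}{X}+1)\frac{1}{H\log^2 X}$, and an extra multiplier $|I|^2\asymp H^2\log^2 P_j$ wrecks that estimate.

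The paper avoids this by performing the split at the level of Dirichlet polynomials \emph{before} any integration or Cauchy--Schwarz: it writes
\[
F(s)=\sum_{v\in I\cap\mathbb{Z}}A_{v,H}(s)B_{v,H}(s)+\sum_{k}\frac{d_k}{k^s},
\]
where the error is a \emph{single} Dirichlet polynomial supported on $k\in[Xe^{-1/H},Xe^{1/H}]\cup[2X,2Xe^{1/H}]$ with $|d_k|\le\sum_{k=mn,\,M\le m\le M'}|a_m b_n|$, so that $|d_k|/k\le c_k$ on $\Re(s)=1$. Only then does one apply $|a+b|^2\ll|a|^2+|b|^2$, followed by Cauchy--Schwarz plus maximum applied to $\sum_v A_{v,H}B_{v,H}$ alone (placing $|I|^2$ only on the main term), and the improved mean value theorem (Lemma~\ref{2}) applied to the fringe polynomial. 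The fix to your argument is simply to swap the order: first collect all the clipped $mn$ into one fringe polynomial, then maximize over $v$ in what remains.
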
 
\begin{remark}
In applications we have $M'\geq 2M$, so the conditions that $H\log M$ and $H\log M'$ be integers can be ignored, since we can always afford to vary $H$ and $M'$ by the necessary amount.
\end{remark}

\begin{remark}
When proving Theorem \ref{t4}, we cannot afford to lose any powers of logarithm in some factorizations, and indeed the second term in the lemma crucially has the factor $T$ instead of the factor $X$ occurring in the mean value theorem, and in the first term we will lose a factor of size $\ll H^2\log^2 \frac{M'}{M}$, which in practice is minuscule. 
\end{remark}

\begin{proof} This resembles Lemma 12 in the paper \cite{matomaki} by Matomäki and Radziwi\l{}\l{} (where, in addition to factorization in short intervals, a Ramaré-type identity is used). We split $F(s)$ into short intervals, obtaining
\begin{align*}
F(s)=\sum_{v\in I\cap \mathbb{Z}}\sum_{e^{\frac{v}{H}}\leq m<e^{\frac{v+1}{H}}}\frac{a_m}{m^s}\sum_{Xe^{-\frac{v+1}{H}}\leq n<2Xe^{-\frac{v}{H}}\atop mn\sim X}\frac{b_n}{n^s}.
\end{align*}
Observe that $Xe^{-\frac{v+1}{H}}\leq n<Xe^{-\frac{v}{H}}$ can hold above only for $mn\in [Xe^{-\frac{1}{H}},Xe^{\frac{1}{H}}]$. Furthermore, we always have $mn\in [Xe^{-\frac{1}{H}},2Xe^{\frac{1}{H}}]$. This allows us to write
\begin{align}\label{eq43}
F(s)=\sum_{v\in I\cap \mathbb{Z}}A_{v,H}(s)B_{v,H}(s)+\sum_{k\in [Xe^{-\frac{1}{H}},Xe^{\frac{1}{H}}]\,or\atop k\in [2X,2Xe^{\frac{1}{H}}]}\frac{d_{k}}{k^s}
\end{align}
with
\begin{align*}
|d_{k}|\leq \sum_{k=mn}|a_mb_{n}|.
\end{align*}
Now the claim of the lemma follows by taking mean squares on both sides of \eqref{eq43} on the line $\Re(s)=1$, applying the improved mean value theorem (Lemma \ref{2}), and taking the maximum in the sum over $I$.\end{proof}

\subsection{Bounds for Dirichlet polynomials}\label{subsec:Dirichlet bounds}

We need several mean, large and pointwise value results on Dirichlet polynomials. The following lemma is one of the basic tools.

\begin{lemma}{(Mean value theorem for Dirichlet polynomials)}\label{12} Let $N\geq 1$ and $F(s)=\sum_{n\sim N}\frac{a_n}{n^s}$, where $a_n$ are any complex numbers. Then
\begin{align*}
\int_{-T}^{T}|F(it)|^2 dt\ll (N+T)\sum_{n\sim N}|a_n|^2.
\end{align*}\end{lemma}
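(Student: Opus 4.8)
The plan is to prove the mean value theorem by the standard duality / large sieve argument. Expanding the square, one has
\begin{align*}
\int_{-T}^{T}|F(it)|^2\,dt=\sum_{m\sim N}\sum_{n\sim N}a_m\overline{a_n}\int_{-T}^{T}\left(\frac{n}{m}\right)^{it}dt,
\end{align*}
so the inner integral is $\int_{-T}^{T}e^{it\log(n/m)}\,dt$, which equals $2T$ when $m=n$ and is $\ll \min(T,|\log(n/m)|^{-1})$ otherwise. The diagonal $m=n$ contributes exactly $2T\sum_{n\sim N}|a_n|^2$, which is admissible. The task is therefore to bound the off-diagonal contribution by $\ll N\sum_{n\sim N}|a_n|^2$.

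First I would handle the off-diagonal terms: for $m\neq n$ with $m,n\in[N,2N)$ we have $|\log(n/m)|\gg |m-n|/N$, so $\min(T,|\log(n/m)|^{-1})\ll N/|m-n|$. Writing $2|a_m\overline{a_n}|\leq |a_m|^2+|a_n|^2$ and summing, the off-diagonal contribution is
\begin{align*}
\ll \sum_{m\sim N}|a_m|^2\sum_{\substack{n\sim N\\ n\neq m}}\frac{N}{|m-n|}\ll \sum_{m\sim N}|a_m|^2\cdot N\log N,
\end{align*}
which already gives the result with an extra $\log N$. To remove this logarithm one should instead invoke the standard large sieve inequality (or, equivalently, a Montgomery–Vaughan type mean value estimate): the points $\log n$ for $n\sim N$ are $\delta$-separated with $\delta\gg 1/N$, and the large sieve / Gallagher's lemma gives $\int_{-T}^{T}|\sum_n a_n n^{it}|^2\,dt\ll (T+\delta^{-1})\sum_n|a_n|^2\ll (T+N)\sum_n|a_n|^2$. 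Alternatively one can cite Montgomery–Vaughan's inequality $\sum_{m\neq n}\frac{a_m\overline{a_n}}{\log(n/m)}\ll N\sum|a_n|^2$ directly. Either route is entirely classical.

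The only point requiring a little care is the passage from $|\log(n/m)|$ to $|m-n|/N$ and the verification that the relevant sum $\sum_{0<|j|<N}1/|j|$ is $\ll \log N$ — trivial — versus getting the sharp $N$ with no logarithm, which is exactly what the large sieve is designed to deliver. So I expect no genuine obstacle here; the lemma is a textbook fact (e.g. it appears in this exact form in Montgomery's \emph{Topics in Multiplicative Number Theory} or Iwaniec–Kowalski), and the cleanest write-up is simply: expand, isolate the diagonal, and quote the large sieve inequality for the off-diagonal. $\qedd$
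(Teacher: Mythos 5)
Your argument is correct and is exactly the standard proof of the classical mean value theorem that the paper simply cites from Iwaniec--Kowalski, Chapter 9: expand the square, isolate the diagonal, and control the off-diagonal via the large sieve (or Montgomery--Vaughan) to avoid the spurious $\log N$. Nothing further is needed.
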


\begin{proof} See for example Iwaniec and Kowalski's book \cite[Chapter 9]{iwaniec-kowalski}.\end{proof}

If the coefficients $a_n$ are supported on the primes or almost primes and are of size $\asymp \frac{1}{n}$, the sum $\sum_{n\sim N}|a_n|^2$ is essentially $\frac{1}{N\log N}$. However, in some places in the proofs of Theorems \ref{t1}, \ref{t2} and \ref{t3}, it is vital to save one more logarithm in such a situation. This is enabled by an improved mean value theorem.

\begin{lemma}{(Improved mean value theorem)}\label{2} Let $N$ and $F(s)$ be as above. We have
\begin{align}\label{eq5}
\int_{-T}^{T}|F(it)|^2dt\ll T\sum_{n\sim N}|a_n|^2+T\sum_{1\leq h\leq \frac{N}{T}}\sum_{m-n=h\atop m,n\sim N}|a_m||a_n|.
\end{align}\end{lemma}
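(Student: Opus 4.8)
\textbf{Proof plan for Lemma \ref{2} (Improved mean value theorem).}

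The plan is to expand the integral and compute the off-diagonal contribution explicitly using a Fourier-type integral, rather than just bounding it trivially as in Lemma \ref{12}. First I would write
\begin{align*}
\int_{-T}^{T}|F(it)|^2\,dt=\sum_{m,n\sim N}a_m\overline{a_n}\int_{-T}^{T}\left(\frac{n}{m}\right)^{it}\,dt=\sum_{m,n\sim N}a_m\overline{a_n}\cdot\frac{2\sin\!\left(T\log\frac{m}{n}\right)}{\log\frac{m}{n}},
\end{align*}
interpreting the $m=n$ term as $2T$. The diagonal contributes exactly $2T\sum_{n\sim N}|a_n|^2$, which matches the first term on the right of \eqref{eq5}. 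For the off-diagonal terms, I would use $\left|\frac{2\sin(T\log\frac{m}{n})}{\log\frac{m}{n}}\right|\leq \min\!\left(2T,\frac{2}{|\log\frac{m}{n}|}\right)\ll \min\!\left(T,\frac{N}{|m-n|}\right)$, using $|\log\frac mn|\asymp \frac{|m-n|}{N}$ for $m,n\sim N$, together with $|a_m\overline{a_n}|\leq |a_m||a_n|$.

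Next I would split the off-diagonal sum according to whether $|m-n|\leq N/T$ or $|m-n|>N/T$. For the range $|m-n|=h$ with $1\leq h\leq N/T$, I bound $\min(T,N/h)\leq T$ (since $N/h\geq T$ here) and get precisely the second term $T\sum_{1\leq h\leq N/T}\sum_{m-n=h,\,m,n\sim N}|a_m||a_n|$ appearing in \eqref{eq5}, after symmetrizing in $m,n$. For $h=|m-n|>N/T$, I use the bound $N/h$ for the kernel, so the contribution is $\ll \sum_{h>N/T}\frac{N}{h}\sum_{m-n=h,\,m,n\sim N}|a_m||a_n|$; applying $|a_m||a_n|\leq \frac12(|a_m|^2+|a_n|^2)$ and summing the geometric-type series $\sum_{N/T<h\leq N}\frac{N}{h}\ll N\log T$... — here I need to be careful, since a naive bound gives $N\log T\sum_n|a_n|^2$, which is worse than the mean value theorem. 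The fix is to sum over $h$ first for fixed $m$: $\sum_{h>N/T}\frac{N}{h}|a_m||a_{m-h}|$, and use Cauchy–Schwarz in the form $\sum_h \frac{N}{h}|a_{m-h}|\ll \left(\sum_h \frac{N}{h^2}\right)^{1/2}\!\left(\sum_h N|a_{m-h}|^2\right)^{1/2}$; the first factor is $\ll (T)^{1/2}$ and after a further Cauchy–Schwarz over $m$ the total is $\ll T\sum_n|a_n|^2$, absorbed into the first term. (Alternatively one subsumes this range into the diagonal bound directly via $\min(T,N/h)$ summed against $|a_m|^2$.)

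The main obstacle is precisely this far-off-diagonal range $|m-n|>N/T$: one must not lose a logarithm there, so the treatment has to be the Cauchy–Schwarz / rearrangement argument above rather than a crude termwise bound, matching the "nearly lossless" philosophy emphasized in the remarks. Everything else — the Fourier evaluation of the integral, the diagonal term, and the near-diagonal range — is routine. I would also note that the statement as phrased has $\int_{-T}^T$ rather than $\int_0^T$, so all constants are harmless, and that when $N/T<1$ the second term is empty and the bound reduces to the trivial $T\sum|a_n|^2 + (\text{negligible})$, consistent with $N\leq T$ in Lemma \ref{12}.
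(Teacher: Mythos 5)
Your treatment of the far off-diagonal range $|m-n|>N/T$ has a genuine gap, and it is not a fixable bookkeeping issue: once you bound the kernel by $\min(T,N/|m-n|)$ and put absolute values on the coefficients, the resulting quantity in that range is simply too large. Take $a_n\equiv 1$: then
\begin{align*}
\sum_{h>N/T}\frac{N}{h}\sum_{\substack{m-n=h\\ m,n\sim N}}|a_m||a_n|\asymp N^2\sum_{N/T<h\leq N}\frac{1}{h}\asymp N^2\log T,
\end{align*}
whereas the right-hand side of \eqref{eq5} is $\asymp TN+N^2$; for $T\leq N$ (the relevant regime, since for $N\leq T$ the lemma is just Lemma \ref{12}) you are off by exactly the factor $\log T$ that the lemma is designed to save. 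Your Cauchy--Schwarz "fix" does not rescue this: carrying it out gives $\sum_h \frac{N}{h}|a_{m-h}|\ll T^{1/2}\bigl(N\sum_n|a_n|^2\bigr)^{1/2}$, and after the second Cauchy--Schwarz over $m$ the total is $\ll NT^{1/2}\sum_n|a_n|^2$, not $T\sum_n|a_n|^2$; since $N\gg T^{1/2}$ here, this is worse than the target (and indeed worse than the trivial mean value theorem). The underlying reason is that the needed saving in the far range comes from oscillation of $\sin\bigl(T\log\frac{m}{n}\bigr)$ as $m,n$ vary, which is destroyed the moment you take absolute values termwise; no rearrangement of absolute values can recover it.

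The paper avoids this entirely by quoting Lemma 7.1 of Iwaniec--Kowalski (Gallagher's lemma) with $Y=10T$: one first smooths, obtaining $\int_{-T}^{T}|F(it)|^2dt\ll T^2\int_0^\infty\bigl|\sum_{x\leq n\leq xe^{1/T}}a_n\bigr|^2\frac{dx}{x}$, and only then expands the square. Two integers $m,n$ can lie in a common window $[x,xe^{1/T}]$ only if $|m-n|\ll N/T$, and the measure of admissible $x$ is $\ll 1/T$, so the off-diagonal terms with $|m-n|>N/T$ never appear and the bound \eqref{eq5} drops out immediately. If you want a self-contained proof, you should either reproduce that smoothing step or replace $1_{[-T,T]}(t)$ by a majorant whose Fourier transform is supported in $|\log\frac{m}{n}|\leq 1/T$; your diagonal and near-diagonal computations are fine and carry over unchanged.
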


\begin{remark}
The number of solutions to $m-n=h$, with $m$ and $n$ primes and $m,n\sim N$, is $\ll \frac{N^2}{\log^2 N}\cdot \frac{h}{\varphi(h)}$ (with $\varphi$ Euler's totient function), which follows easily from Brun's sieve, for example. If $T\leq \frac{N}{h}, h\geq \log N$ and $a_n$ is supported on the primes, the first sum in \eqref{eq5} turns out not to be problematic, so we indeed save essentially one additional logarithm with this lemma. We remark that if we have polynomials of length $N\leq T$, Lemma \ref{2} reduces to the basic mean value theorem.
\end{remark}

\begin{proof} This follows from Lemma 7.1 in \cite[Chapter 7]{iwaniec-kowalski}, taking $Y=10T$ there. \end{proof}

We also put into use a discrete mean value theorem, which is particularly useful when we take the mean square over a rather small set of points.

\begin{lemma}{(Halász-Montgomery inequality)}\label{18} Let $N$ and $F(s)$ be as before. Let $\mathcal{T}\subset [-T,T]$ be \textnormal{well-spaced}, meaning that $t,u\in \mathcal{T}$ and $t\neq u$ imply $|t-u|\geq 1.$ Then
\begin{align*}
\sum_{t\in \mathcal{T}}|F(it)|^2\ll (N+|\mathcal{T}|T^{\frac{1}{2}})(\log T)\sum_{n\sim N}|a_n|^2.
\end{align*}\end{lemma}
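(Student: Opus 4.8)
The plan is to run the standard Halász--Montgomery duality argument, which reduces the inequality to a pointwise bound on a zeta sum. Assume $G:=\sum_{t\in\mathcal T}|F(it)|^2>0$ (otherwise there is nothing to prove) and set $b_t=\overline{F(it)}/\sqrt G$, so that $\sum_{t\in\mathcal T}|b_t|^2=1$ and
\[
\sqrt G=\sum_{t\in\mathcal T}b_tF(it)=\sum_{n\sim N}a_n\Bigl(\sum_{t\in\mathcal T}b_tn^{-it}\Bigr).
\]
Cauchy--Schwarz then gives
\[
G\le\Bigl(\sum_{n\sim N}|a_n|^2\Bigr)\sum_{n\sim N}\Bigl|\sum_{t\in\mathcal T}b_tn^{-it}\Bigr|^2 ,
\]
and expanding the inner square and separating the diagonal term, using $|b_t|^2+|b_u|^2\ge 2|b_t||b_u|$ and $\sum_t|b_t|^2=1$, reduces the claim to the estimate
\[
\sum_{n\sim N}\Bigl|\sum_{t\in\mathcal T}b_tn^{-it}\Bigr|^2\ \le\ N+\max_{t\in\mathcal T}\sum_{\substack{u\in\mathcal T\\ u\ne t}}\bigl|K(t-u)\bigr|\ \ll\ N\log N+|\mathcal T|T^{1/2},
\]
where $K(w)=\sum_{n\sim N}n^{iw}$. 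Here the key structural observation is that the differences $w=t-u$ run over a $1$-separated subset of $[-2T,2T]\setminus\{0\}$, so there are at most $|\mathcal T|$ of them and at most two in any interval of length one.

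To bound $K(w)$ I would use three inputs. First, the trivial bound $|K(w)|\ll N$. Second, the first-derivative test (Kuzmin--Landau): for $1\le|w|\le N$ the phase $-\tfrac{w}{2\pi}\log n$ has monotone derivative of size $\asymp|w|/N\le\tfrac12$, so $|K(w)|\ll N/|w|$. Third, the exponent pair $(1/2,1/2)$ (the $B$-transform of the trivial pair $(0,1)$, i.e.\ essentially van der Corput's second-derivative test), which yields the clean bound $|K(w)|\ll|w|^{1/2}$ for $|w|\ge 1$ (for $|w|>N^2$ one just falls back on $|K(w)|\le N\ll|w|^{1/2}$). Now split the sum over $u$ at $|t-u|=N^{2/3}$, the point where the second and third bounds balance. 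The near part is $\ll\sum_{1\le|w|\le N^{2/3}}N/|w|\ll N\log N$ by the harmonic series over $1$-separated points; the far part has at most $|\mathcal T|$ terms, each of size $\ll|w|^{1/2}\ll T^{1/2}$, so it is $\ll|\mathcal T|T^{1/2}$. Combining with the diagonal, $\sum_{t\in\mathcal T}|F(it)|^2\ll(N\log N+|\mathcal T|T^{1/2})\sum_{n\sim N}|a_n|^2$, which gives the lemma upon noting $\log N\ll\log T$ in the ranges relevant to the applications (in general one may replace $\log T$ by $\log(N+T)$).

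The step requiring the most care is the off-diagonal sum $\sum_{u\ne t}|K(t-u)|$. A naive estimate that treats each zeta sum trivially would produce a factor of size $\asymp T$ rather than $T^{1/2}$, so it is essential to invoke the exponent-pair bound for the terms with $|t-u|$ large while \emph{simultaneously} using the $1$-separation of $\mathcal T$, which caps the number of such terms by $|\mathcal T|$ instead of $\asymp T$; this interplay is the heart of the argument. A secondary technical point is keeping track of the ranges of validity of the Kuzmin--Landau and exponent-pair estimates, but these reconcile painlessly because the trivial bound $|K(w)|\ll N$ is always available and already acceptable in the leftover ranges. One could instead try to argue via Gallagher's lemma together with the mean value theorem (Lemma \ref{12}), but that route only gives the weaker bound $(N+T)\log N$ and fails to exploit smallness of $|\mathcal T|$, so the duality argument above is the one I would carry out in detail.
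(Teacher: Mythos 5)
Your argument is correct and is essentially the standard duality proof of the Hal\'asz--Montgomery inequality; the paper itself offers no proof, only a citation to Iwaniec--Kowalski \cite[Chapter 9]{iwaniec-kowalski}, and your reduction to $\max_t\sum_{u\neq t}|K(t-u)|$ followed by the Kuzmin--Landau and van der Corput bounds on the zeta sum $K(w)$ is exactly the route taken there. One small overstatement: the ``clean bound'' $|K(w)|\ll |w|^{1/2}$ is false for $1\le |w|\le N^{2/3}$ (the main term of $\int_N^{2N}x^{iw}\,dx$ is of size $\asymp N/|w|$, and the second-derivative test only gives $|w|^{1/2}+N|w|^{-1/2}$), but since you invoke it only for $|w|\ge N^{2/3}$ --- where it does follow by combining the first- and second-derivative tests --- the proof is unaffected, and the residual $\log N$ versus $\log T$ discrepancy disappears upon noting that the off-diagonal differences satisfy $|t-u|\le 2T$, so the harmonic sum in the near range is in fact $\ll N\log T$.
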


\begin{proof} For a proof, see Iwaniec and Kowalski's book \cite[Chapter 9]{iwaniec-kowalski}. \end{proof}

In addition to mean value theorems, we need some large values theorems. We come across some very short Dirichlet polynomials, say of length $\ll T^{o(1)}$, and we make use of the fact that the coefficients of these polynomials are supported on the primes.

\begin{lemma}\label{7} Let $P\geq 1,\,V>0$ and 
\begin{align*}
F(s)=\sum_{p\sim P}\frac{a_p}{p^s}
\end{align*}
with $|a_p|\leq 1$. Let $\mathcal{T}\subset [-T,T]$ be a well-spaced set of points such that $|F(1+it)|\geq V$ for each $t\in \mathcal{T}$. Then we have
\begin{align*}
|\mathcal{T}|\ll T^{2\frac{\log V^{-1}}{\log P}}V^{-2}\exp\left((1+o(1))\frac{\log T}{\log P}\log \log T\right).
\end{align*}\end{lemma}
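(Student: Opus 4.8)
The plan is to prove this large values estimate for prime-supported Dirichlet polynomials by combining a moment computation with a counting argument, in the classical style of Montgomery's large values theory but exploiting primality through a high-moment bound rather than the mean value theorem. First I would fix an integer $k \geq 1$ (to be chosen at the end) and consider the $2k$-th moment $\sum_{t \in \mathcal{T}} |F(1+it)|^{2k}$. Since every $t \in \mathcal{T}$ contributes at least $V^{2k}$, this moment is $\geq |\mathcal{T}| V^{2k}$. On the other hand, $F(1+it)^k = \sum_{n} \frac{b_n}{n^{1+it}}$ where $b_n = \sum_{p_1 \cdots p_k = n,\, p_i \sim P} a_{p_1}\cdots a_{p_k}$ is supported on $n \sim P^k$ with $|b_n| \leq d_k(n) \leq k!$ for $n$ squarefree (and $b_n$ vanishes unless $n$ is a product of exactly $k$ primes from $[P,2P)$), so $\sum_n |b_n|^2 n^{-2} \ll (k!)^2 P^{-k} \cdot |\{n \sim P^k : n = p_1\cdots p_k\}|/P^k$, which after a routine count is $\ll (k!)^2 (\log P)^{k-1} P^{-k} / ((k-1)!)$, i.e. essentially $(\log P)^{-1}$ worth of saving per prime, times combinatorial factors. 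I would apply Lemma \ref{12} (the mean value theorem) to the polynomial $F(1+it)^k$ of length $\asymp P^k$ over the well-spaced set embedded in $[-T,T]$: this gives $\sum_{t\in\mathcal T}|F(1+it)|^{2k} \ll (P^k + T)\sum_n |b_n|^2 n^{-2}$.

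Combining the two bounds yields $|\mathcal{T}| V^{2k} \ll (P^k + T)\sum_n |b_n|^2 n^{-2}$, hence $|\mathcal{T}| \ll V^{-2k}(P^k + T) \cdot (k!)^2 (\log P)^{k-1}/((k-1)! P^k)$. When $P^k \leq T$ this simplifies to $|\mathcal{T}| \ll V^{-2k} T P^{-k} \cdot k \cdot k! \,(\log P)^{k-1}$. Now I would optimize over $k$: writing $T P^{-k} = T^{1 - k\log P/\log T}$ and absorbing $k!(\log P)^{k-1}$, the term $V^{-2k}$ forces a comparison with the target exponent $2\log V^{-1}/\log P$ on $T$. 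Taking $k = \lfloor \log T / \log P \rfloor$ (so $P^k$ is of size roughly $T$, keeping us in the regime $P^k \leq T$ up to adjusting constants), the factor $V^{-2k} = V^{-2\log T/\log P + O(1)}$ contributes exactly the $T^{2\log V^{-1}/\log P}$ once one writes $V^{-2k} = \exp((\log V^{-1})\cdot 2\log T/\log P + O(\log V^{-1}))$, and the combinatorial debris $k! (\log P)^{k-1} = \exp(O(k \log k) + O(k \log\log T)) = \exp((1+o(1))\frac{\log T}{\log P}\log\log T)$ since $k \leq \log T/\log P$ and $\log k \leq \log\log T$. The leftover $V^{-2}$ and the polynomial-in-$k$ factors are swallowed into the $\exp((1+o(1))\cdots)$ or kept explicitly as in the statement.

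The main obstacle I anticipate is the bookkeeping in the optimization: one must check that the rounding in the choice of $k$ does not spoil either the exponent $2\log V^{-1}/\log P$ of $T$ or the $V^{-2}$ factor, and in particular that the case analysis (whether $P^k$ or $T$ dominates, and whether $k$ as chosen is $\geq 1$, which requires $P \leq T$, a harmless assumption since otherwise the polynomial has $O(1)$ terms and the bound is trivial) is handled uniformly. A secondary subtlety is the count of integers $n \sim P^k$ that are products of exactly $k$ primes from $[P, 2P)$ together with the bound $|b_n| \leq k!$: this needs the primes $p_i$ to range over a dyadic block so that $p_1 \cdots p_k \sim P^k$ automatically, and one should note any repeated-prime contributions are negligible; all of this is standard but must be stated cleanly. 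If one wants the sharp constant $(1+o(1))$ rather than $O(1)$ in the exponent, it is cleanest to take $k$ slightly below $\log T/\log P$, say $k = \lfloor (1-\delta)\log T/\log P\rfloor$ and let $\delta \to 0$ slowly, which makes the $P^{-k}T = T^{\delta}$ factor explicit and absorbable.
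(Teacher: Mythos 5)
Your overall strategy is exactly the standard one: the paper gives no proof of its own here, it simply cites Lemma 8 of Matom\"aki--Radziwi\l{}\l{}, and the proof of that lemma is precisely the moment argument you describe (raise $F$ to a power $k$ comparable to $\log T/\log P$, apply the mean value theorem to the resulting polynomial of length $\asymp P^k$, and exploit the $\asymp(\log P)^{-k}$ saving coming from the prime support of the coefficients). Two small remarks before the main issue: Lemma \ref{12} is stated for the integral over $[-T,T]$, so to bound the sum over the well-spaced set $\mathcal{T}$ you need the standard Gallagher/Sobolev step, which costs an extra factor $\ll k\log P$ that is comfortably absorbed; and your count of the relevant $n$ has a spurious $(\log P)^{k-1}$ in the numerator (the correct count of products of $k$ primes from $[P,2P)$ is $\ll \pi(2P)^k/k! \ll (2P/\log P)^k/k!$), though this does not affect the final envelope $\exp((1+o(1))\frac{\log T}{\log P}\log\log T)$.

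The one step that would actually fail as written is the choice $k=\lfloor \log T/\log P\rfloor$. In the regime $P^k\leq T$ your bound carries the factor $TP^{-k}$, and with $k$ rounded down this can be as large as $P$, which is in no way absorbable into $\exp\left(o(1)\frac{\log T}{\log P}\log\log T\right)$ (take $P=T^{1/2}$, where the allowed slack is only $(\log T)^{O(1)}$). Your proposed remedy of taking $k=\lfloor(1-\delta)\log T/\log P\rfloor$ leaves a factor $T^{\delta}$, which is absorbable only if $\delta=o(\log\log T/\log P)$ --- a constraint you do not verify and which is genuinely restrictive when $P$ is a power of $T$. The clean fix is to round \emph{up}: take $k=\lceil \log T/\log P\rceil$ (noting that WLOG $V\leq 1$, since otherwise $\mathcal{T}$ is empty as $|F(1+it)|\ll 1/\log P$). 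Then $P^k\geq T$, so the length term dominates and $(P^k+T)\sum_n|b_n|^2n^{-2}\ll k!\,2^{O(k)}(\log P)^{-k}\cdot(\text{Gallagher factor})\ll \exp((1+o(1))k\log\log T)$, while $V^{-2k}\leq V^{-2\log T/\log P-2}=T^{2\log V^{-1}/\log P}V^{-2}$, which is exactly the stated main term including the $V^{-2}$. With that substitution your argument closes.
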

\begin{remark}
We may also apply this lemma to polynomials not supported on primes, provided that $P\gg X^{\varepsilon}$ for some $\varepsilon>0$. In this case, the lemma is essentially the mean value theorem applied to a suitable moment of the polynomial.
\end{remark}

\begin{proof} This is Lemma 8 in the paper \cite{matomaki}. There a factor of $2$ occurs instead of $1+o(1)$ in the last exponential, but the exact same proof works with the factor $1+o(1)$.\end{proof} 

For proving Theorem \ref{t5}, we also need a large values theorem designed for long polynomials. The reason for presenting it along with the lemmas for Theorem \ref{t4} is that combining it with the other lemmas already gives the exponent $c=5+\varepsilon$ for $E_2$ numbers. The large values result is a theorem of Jutila that improves on the better known Huxley's large values theorem.

\begin{lemma}\label{13} (Jutila's large values theorem). Let $F(s)=\sum_{n\sim N}\frac{a_n}{n^s}$ with $|a_n|\leq d_r(n)$ for some fixed $r$. Let $\mathcal{T}\subset [-T,T]$ be a well-spaced set such that $|F(1+it)|\geq V$ for $t\in \mathcal{T}$, and let $k$ be any positive integer. We have
\begin{align*}
|\mathcal{T}|\ll \left(V^{-2}+\frac{T}{N^{2}}V^{-6+\frac{2}{k}}+V^{-8k}\frac{T}{N^{2k}}\right)(NT)^{o(1)}.
\end{align*}
\end{lemma}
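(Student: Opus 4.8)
\textbf{Proof proposal for Lemma \ref{13} (Jutila's large values theorem).}

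The plan is to follow the classical large values machinery of Huxley, but to insert Jutila's refinement at the stage where one estimates the eighth-power (or, more generally, the relevant high moment) contribution. First I would recall the basic inequality from the duality/Halász approach: if $\mathcal{T}$ is well-spaced and $|F(1+it)|\ge V$ on $\mathcal{T}$, then by Montgomery's mean-value bound together with a double large sieve argument one obtains
\begin{align*}
|\mathcal{T}|V^2 \ll \Big(N + T^{1/2}|\mathcal{T}|^{1/2}\Big)(NT)^{o(1)},
\end{align*}
which already yields the term $V^{-2}$ in the range where $N$ dominates, and a weaker $TV^{-4}$-type bound otherwise. The point of Jutila's theorem is to beat the $T V^{-4}$ regime by exploiting the divisor-bounded coefficients and raising $F$ to a higher power before applying the large sieve.

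The key steps, in order, would be: (1) Reduce to the case $|a_n|\le d_r(n)$ with a well-spaced $\mathcal{T}$ and $|F(1+it)|\ge V$; absorb all $(NT)^{o(1)}$ factors freely, so that the precise shape of the coefficient bound only matters up to $N^{o(1)}$. (2) Apply Halász--Montgomery (Lemma \ref{18}, or rather its line-$\Re s = 1$ analog) to get the preliminary bound $|\mathcal{T}| \ll (N + |\mathcal{T}|T^{1/2})V^{-2}(NT)^{o(1)}$, disposing of the first term $V^{-2}$ and reducing to the situation $|\mathcal{T}| T^{1/2} \gg N$, i.e. $N$ not too large. (3) For the second term: raise $F(1+it)$ to the $k$th power, so that $F^k$ is a Dirichlet polynomial of length $N^k$ with coefficients bounded by $d_{rk}(n) \ll n^{o(1)}$, and on which $|F^k(1+it)| \ge V^k$; apply the mean value theorem (Lemma \ref{12}) to $F^k$ to get $|\mathcal{T}| V^{2k} \ll (N^k + T) N^{o(1)}$, which in the range $T \gg N^k$ produces the term $V^{-8k} T N^{-2k}$ after combining with the earlier reductions (the exponent $8k$ and the $N^{-2k}$ being exactly the bookkeeping of how the $V^{-2}$-type gains from step (2) interact with the $k$th power). (4) Finally, the intermediate term $\frac{T}{N^2}V^{-6+2/k}$ comes from an interpolation: split $\mathcal{T}$ according to whether $F^k$ is "large" on a short sub-polynomial, use the fourth-moment/Halász bound on the pieces and the $2k$th moment on the rest, optimizing the split; this is where Jutila's argument genuinely improves on Huxley, by using the mean square of $F$ itself (bounded via the divisor bound by $N^{1+o(1)}$) in combination with the higher moment.

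I would expect the main obstacle to be step (4): making the interpolation between the second and the $2k$th moments precise enough to land exactly on the exponent $-6 + \frac{2}{k}$ rather than something weaker, while keeping all losses inside $(NT)^{o(1)}$. One has to be careful that the well-spacedness of $\mathcal{T}$ is preserved under the sub-division, and that the mean value theorem applied to $F^k$ does not lose a power of $N$ through the support of the coefficients — here the bound $|a_n|\le d_r(n)$ is used precisely so that $\sum_{n\sim N^k}|a_n^{(k)}|^2 \ll N^k (N^k)^{o(1)}$, where $a^{(k)}$ denotes the coefficients of $F^k$. Since we are allowed the factor $(NT)^{o(1)}$ throughout, I would not need the sharpest form of the divisor moment estimates, only the trivial $d_r(n)\ll_r n^{o(1)}$. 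The cleanest route is to quote the argument essentially verbatim from Jutila's original density paper, checking only that the hypothesis $|a_n|\le d_r(n)$ (rather than $a_n$ being the coefficients of a zeta-power) suffices, which it does since the only property used is the $n^{o(1)}$ bound together with the large-sieve/mean-value inputs already available.
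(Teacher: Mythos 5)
Your final route---quoting Jutila's large values theorem verbatim from his density paper and checking only that the divisor bound gives $G=\sum_{n\sim N}|a_n|^2n^{-2}\ll N^{-1+o(1)}$ in his normalization---is exactly what the paper does (it invokes formula (1.4) of Jutila's paper with precisely this value of $G$), so the proposal is correct in its essential approach. Note, however, that your sketched reconstruction in steps (3)--(4) does not actually yield the stated exponents (the mean value theorem applied to $F^k$ gives $|\mathcal{T}|\ll V^{-2k}\left(1+TN^{-k}\right)(NT)^{o(1)}$, not the term $V^{-8k}TN^{-2k}$, and the interpolation producing $V^{-6+2/k}$ is left entirely unexplained); this is harmless only because you ultimately fall back on the citation.
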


\begin{proof} The proof can be found in Jutila's paper \cite{jutila-density}. We apply formula (1.4) there to $F(s)^{\ell}$, and have $G=\sum_{n\sim N}\frac{|a_n|^2}{n^2}\ll (NT)^{o(1)}N^{-1}$ in the notation of that paper.\end{proof}

In some cases in the proof of Theorem \ref{t4}, there will be polynomials supported on primes or almost primes for which the best we can do is apply a pointwise bound. These bounds follow in the end from Vinogradov's zero-free region. 

\begin{lemma}\label{17} Let 
\begin{align*}
P(s)=\sum_{n_1\dotsm n_k\sim N}g_1(n_1)\dotsm g_k(n_k)(n_1\dotsm n_k)^{-s},
\end{align*}
where $k\geq 1$ is a fixed integer and each $g_i$ is either the Möbius function, the characteristic function of the primes, the identity function, or the logarithm function. We have
\begin{align*}
|P(1+it)|\ll \exp\left(-(\log N)^{\frac{1}{10}}\right)
\end{align*}
when $\exp((\log N)^{\frac{1}{3}})\leq |t|\leq N^{A\log \log N}$ for any fixed $A>0$.\end{lemma}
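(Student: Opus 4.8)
The plan is to represent $P(1+it)$ as a contour integral of the product
\[
\mathcal{F}(s)=\prod_{i=1}^{k}G_i(s),\qquad G_i(s)=\sum_n g_i(n)n^{-s},
\]
where each $G_i$ is one of $\zeta(s)$, $-\zeta'(s)$, $1/\zeta(s)$, $\sum_p p^{-s}$ (and $\sum_p p^{-s}=\log\zeta(s)+O(1)$ for $\Re s>\tfrac12$), and then to move the contour past $\Re s=1$ using Vinogradov's zero-free region. Writing $a_m=\sum_{m=n_1\cdots n_k}g_1(n_1)\cdots g_k(n_k)$, we have $P(s)=\sum_{N\le m<2N}a_m m^{-s}$, and $\sum_m a_m m^{-s}=\mathcal{F}(s)$ converges absolutely for $\Re s>1$. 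I would first apply the truncated Perron formula on the vertical line $\Re w=1/\log N$ to each of $\sum_{m<Y}a_m m^{-1-it}$ for $Y\in\{N,2N\}$ (perturbing $N$ so that $N,2N$ are not half-integers), with truncation height $T_1\in[\sqrt N,N]$ chosen so that $\big|\,|t|-T_1\,\big|\ge\sqrt N$. Since $Y^{1/\log N}\ll1$, $|a_m|\le d_k(m)(\log N)^{O(1)}$, and $\sum_{m\le x}d_k(m)=xQ_{k-1}(\log x)+O(x^{\theta_k})$ with $\theta_k<1$, the Perron error terms sum to $\ll N^{-c}$ for some $c=c(k)>0$; hence $P(1+it)$ equals the difference of the two truncated integrals up to $O(N^{-c})$.

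The next step is to shift both integrals to the line $\Re w=-\delta$ with
\[
\delta=c_0(\log N)^{-2/3}(\log\log N)^{-1},
\]
$c_0>0$ small in terms of $A$ and the Vinogradov--Korobov constant. For $|\Im w|\le T_1$ and $|t|\le N^{A\log\log N}$ the imaginary part of $1+it+w$ has absolute value $\ll N^{A'\log\log N}$, so Vinogradov's region rules out zeros of $\zeta$ with real part $\ge 1-\delta$ there; thus $\mathcal{F}(1+it+w)$ is holomorphic in the rectangle $-\delta\le\Re w\le 1/\log N$, $|\Im w|\le T_1$, apart from the singularity of $\mathcal{F}$ at $s=1$, i.e.\ at $w=-it$, which lies in the rectangle only when $|t|\le T_1$. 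On the shifted line and on the two horizontal segments joining the lines, the zero-free-region bounds $|\zeta|,|1/\zeta|\ll(\log|\tau|)^{2/3}$, $|\zeta'|\ll(\log|\tau|)^{O(1)}$, $|\log\zeta|\ll\log|\tau|$ give $|\mathcal{F}(1+it+w)|\ll(\log N)^{O(1)}$ uniformly; combined with $\bigl|\tfrac{(2N)^w-N^w}{w}\bigr|=\bigl|\int_N^{2N}y^{w-1}\,dy\bigr|\le N^{\Re w}$ and $\ll N^{\Re w}/(1+|\Im w|)$, the integral over $\Re w=-\delta$ is $\ll(\log N)^{O(1)}N^{-\delta}$, while the horizontal pieces are $\ll(\log N)^{O(1)}/T_1\ll N^{-1/3}$.

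It remains to bound the contribution of $w=-it$ when $|t|\le T_1$. Near $s=1$ we can write $\mathcal{F}(s)=h(s)(s-1)^{-D}\bigl(-\log(s-1)+g(s)\bigr)^{q}$ with $h,g$ holomorphic, $D\in\mathbb{Z}$ depending only on the multiset $\{g_i\}$, and $q$ the number of indices with $g_i$ the prime indicator: for $D\le 0$ there is no singularity; for $q=0$ there is a pole of order $D$, handled by the residue theorem; and for $q\ge 1$ there is a pole times a branch point, handled by replacing the relevant part of the contour by a small Hankel loop around $w=-it$ together with the short branch cut $\{w:\Im w=-t,\ -\delta\le\Re w\le 0\}$. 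In every case $\bigl|\tfrac{(2N)^w-N^w}{w}\bigr|\le 2N^{\Re w}/|t|\ll 1/|t|$ for $w$ in a fixed-radius neighbourhood of $-it$ --- a neighbourhood that excludes $w=0$, since $|t|\ge\exp((\log N)^{1/3})\gg\delta$ --- and the singular part of $\mathcal{F}$ integrates to $\ll(\log N)^{O(1)}$ over the loop and the cut, so this contribution is $\ll(\log N)^{O(1)}/|t|$. Altogether
\[
|P(1+it)|\ll(\log N)^{O(1)}\bigl(N^{-\delta}+|t|^{-1}\bigr)+N^{-c},
\]
and since $\delta\log N\asymp(\log N)^{1/3}/\log\log N$ and $\log|t|\ge(\log N)^{1/3}$ each exceed $(\log N)^{1/10}+O(\log\log N)$ for $N$ large, the right-hand side is $\ll\exp(-(\log N)^{1/10})$.

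The main obstacle is the final step: a prime-indicator factor produces a genuine logarithmic branch point of $\mathcal{F}$ at $s=1$, so the singularity cannot be removed by a single residue and instead forces the Hankel-contour estimate above. A more routine but essential point is the tuning of the Perron parameters: working on the line $\Re w=1/\log N$, just inside the abscissa of absolute convergence of $\mathcal{F}$, rather than on a line with $\Re w$ bounded away from $0$, is what keeps the truncation error and the length of the shifted contour from costing more than a power of $\log N$.
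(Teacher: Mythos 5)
Your argument is correct, but it follows a genuinely different route from the paper's. The paper proves the case $k=1$ essentially as you do (Perron's formula plus the Vinogradov--Korobov region applied to a single one of $\zeta$, $\zeta'$, $1/\zeta$, $\log\zeta$), and then reduces $k\geq 2$ to $k=1$: after a dyadic decomposition of the variables it fixes $n_1,\dots,n_{k-1}$, observes that the remaining variable runs over a subinterval of a dyadic range located at $\gg N^{1/k}$, applies the $k=1$ pointwise bound to that inner sum --- the saving $\exp(-c\log(N^{1/k})/(\log|t|)^{2/3+\varepsilon})$ depends only on the location of the range, not its length, since the sum is a difference of two complete Perron integrals --- and then sums trivially over the outer variables, losing only $(\log N)^{O(1)}$. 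This sidesteps entirely the analysis of the singularity of the full product $\mathcal{F}(s)=\prod_i G_i(s)$ at $s=1$, which is the one delicate point of your approach: when a prime-indicator factor coexists with $\zeta$ or $\zeta'$ factors, $\mathcal{F}$ has a pole of order $D\geq 1$ multiplied by a power of $\log(s-1)$, and the Hankel loop cannot be shrunk to a point (the circle contribution is of order $r^{1-D}(\log(1/r))^{q}$, which diverges as $r\to 0$ for $D\geq 2$); one must fix the loop radius at $\asymp 1/\log N$, as in the Selberg--Delange method, to obtain the $(\log N)^{O(1)}$ bound you assert. With that standard choice made explicit, your argument goes through, and your parameters ($\delta\asymp(\log N)^{-2/3}(\log\log N)^{-1}$, giving $N^{-\delta}=\exp(-c(\log N)^{1/3}/\log\log N)$, together with $|t|^{-1}\leq\exp(-(\log N)^{1/3})$ for the local contribution) comfortably beat $\exp(-(\log N)^{1/10})$. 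In short, the paper's variable-by-variable reduction is the cheaper option; your global contour argument is self-contained and gives a cleaner conceptual picture, but pays for it with the mixed pole-and-branch-point analysis at $s=1$.
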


\begin{proof} For $k=1$, the claim follows directly from Perron's formula and Vinogradov's zero-free region, so let $k\geq 2$. We may assume that $n_1,...,n_k$ belong to some dyadic intervals $I_1,...,I_k$ such that $I_k=[a,b]$ with $a\gg N^{\frac{1}{k}},b\ll N$. Now 
\begin{align*}
&\sum_{n_1\in I_1,...,n_{k-1}\in I_{k-1}}g(n_1)\dotsm g(n_{k-1})(n_1\dotsm n_{k-1})^{-1-it}\sum_{n_k\in I_k\atop  n_k\sim \frac{N}{n_1\dotsm n_{k-1}}}g(n_k) n_k^{-1-it}\\
&\ll (\log N)^{O(1)}\sum_{n_1\in I_1,...,n_{k-1}\in I_{k-1}}(n_1\dotsm n_{k-1})^{-1}\cdot \exp\left(-\frac{\log N^{\frac{1}{k}}}{(\log t)^{\frac{2}{3}+\varepsilon}}\right)\\
&\ll\exp\left(-(\log N)^{\frac{1}{10}}\right),
\end{align*}
as wanted.\end{proof}

\subsection{Moments of Dirichlet polynomials}

We need Watt's result on the twisted fourth moment of zeta sums (see Subsection \ref{subsec:notation} for the definition of zeta sums). This bound comes into play when we estimate the mean square of a product of Dirichlet polynomials where one of the polynomials is a long zeta sum.

\begin{lemma}\label{16} (Watt). Let $T\geq T_0\geq T^{\varepsilon}, T^{1+o(1)}\gg M,N\geq 1$. Define the Dirichlet polynomials  $N(s)=\sum_{n\sim N}n^{-s}$ or $N(s)=\sum_{n\sim N}(\log n)n^{-s}$ and $M(s)=\sum_{m\sim M}\frac{a_m}{m^s}$ with $a_m$ any complex numbers. We have
\begin{align*}
\int_{T_0}^{T}|N(1+it)|^4 |M(1+it)|^2dt\ll \left(\frac{T}{MN^2}(1+M^2T^{-\frac{1}{2}})+\frac{1}{T_0^3}\right)T^{o(1)}\max_{m\sim M}|a_m|^2.
\end{align*}
\end{lemma}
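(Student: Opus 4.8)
The engine of the proof is Watt's bound on the fourth power moment of the Riemann zeta function (equivalently, of zeta sums) twisted by a Dirichlet polynomial \cite{watt}. We will use it in the form: for $B(s)=\sum_{m\le M}\beta_m m^{-s}$ with $M\ll T^{1+o(1)}$,
\begin{align*}
\int_0^T|\zeta(\tfrac12+it)|^4|B(\tfrac12+it)|^2\,dt\ll\bigl(T+M^2T^{1/2}\bigr)T^{o(1)}\sum_{m\le M}\frac{|\beta_m|^2}{m}.
\end{align*}
The plan is to reduce Lemma~\ref{16} to this, disposing of certain degenerate ranges of $M,N$ directly by the mean value theorem (Lemma~\ref{12}). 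Writing $|N(1+it)|^4=|N(s)^2|^2$, the polynomial $N(s)^2$ has length $\asymp N^2$ with coefficients $d_N(k)$ obeying $0\le d_N(k)\le d(k)$, so Lemma~\ref{12} applied to $N(s)^2M(s)$ gives the bound $\ll(1+\tfrac{T}{N^2M})T^{o(1)}\max_{m\sim M}|a_m|^2$, which is admissible whenever $N^2M\ll T$ or $N^2\ll MT^{1/2+o(1)}$. A short computation shows that in the remaining range one has $N\gg T^{1/4}$, and there I would run the following argument.

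Transplant everything to the critical line: since the coefficients of $N(s)^2$ are supported on $k\asymp N^2$ and those of $M(s)$ on $m\sim M$, one has $N(1+it)^2=\sum_k d_N(k)k^{-1/2}\cdot k^{-1/2-it}$ and similarly for $M$, and a summation by parts (or an application of Perron's formula followed by a shift of the contour of integration) converts $|N(1+it)^2|^2|M(1+it)|^2$ into the fourth moment of (a partial sum of) $\zeta$ on the line $\Re s=\tfrac12$, twisted by a Dirichlet polynomial of length $M$ whose coefficients have the same moduli as the $a_m$; the pole of $\zeta$ at $1+it+w=1$ contributes only a rapidly decreasing term, negligible since $t\ge T_0\ge T^{\varepsilon}$, and after applying Hölder's inequality one may appeal to Watt's bound above, with $\sum_m|\beta_m|^2/m\asymp M^{-1}\max_{m\sim M}|a_m|^2$. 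The factors $k^{-1/2}\asymp N^{-1}$ produce the overall $N^{-2}$, so that the main contribution is
\begin{align*}
\ll N^{-2}\bigl(T+M^2T^{1/2}\bigr)M^{-1}T^{o(1)}\max_{m\sim M}|a_m|^2=\frac{T}{MN^2}\bigl(1+M^2T^{-1/2}\bigr)T^{o(1)}\max_{m\sim M}|a_m|^2,
\end{align*}
together with error terms that are absorbed either here or into the term $\frac{1}{T_0^3}T^{o(1)}\max_{m\sim M}|a_m|^2$. The variant in which $N(s)$ carries the weight $\log n$ is handled identically, for instance by writing $\log n=\int_1^n u^{-1}\,du$ and using Hölder's inequality, losing only a factor $(\log N)^{O(1)}=T^{o(1)}$.

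The step I expect to be the main obstacle is carrying out the transplantation without losing more than $T^{o(1)}$ and uniformly in $N$ over the whole relevant range. The delicate points are: sharp cutoffs have slowly decreasing Mellin transforms, so one must smooth them (or otherwise truncate) while controlling the mean square of the resulting error polynomial supported near $n\asymp N$; and when $N^2$ falls below the conductor $\asymp t$ of $\zeta(1+it)$, the conversion really involves a proper segment of the approximate functional equation, so the reflected (dual) sum of length $\asymp t/N^2$ enters, and its contribution to $\int_{T_0}^T|N(1+it)|^4|M(1+it)|^2\,dt$ must be estimated separately, again by the mean value theorem, and checked against the claimed bound. Granting this bookkeeping, the whole point is that Watt's theorem delivers the crucial saving of the term $M^2T^{1/2}$ in place of the $N^2M$ that the trivial mean value estimate would force.
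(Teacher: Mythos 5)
Your proposal is correct and follows essentially the same route as the paper: both reduce the estimate to Watt's twisted fourth-moment theorem after disposing of degenerate ranges by mean-value or pointwise bounds. The transplantation step you flag as the main obstacle is precisely what the paper outsources to Lemma 2 of Baker--Harman--Pintz, and the $\frac{1}{T_0^3}$ term is obtained there more directly from the pointwise bound $|N(1+it)|\ll \frac{1}{t}$ for $T_0\leq t\leq N$ (your ``pole term'', which decays only like $\frac{1}{t}$ rather than rapidly, is exactly this contribution).
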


\begin{proof} An easy partial summation argument shows that we may assume $N(s)=\sum_{n\sim N}n^{-s}$. The lemma will be reduced to Watt's original twisted moment result \cite{watt-thm}, where $N(s)$ is replaced with $\zeta(s)$. It is well-known that $|N(1+it)|\ll \frac{1}{t}$ for $N\geq t\geq 1$ (see \cite[Chapter 8]{iwaniec-kowalski}), so
\begin{align*}
\int_{T_0}^{N}|N(1+it)|^4|M(1+it)|^2dt&\ll \max_{m\sim M}|a_m|^2\int_{T_0}^{T}\frac{1}{t^4}dt\cdot T^{o(1)}\\
&\ll \frac{T^{o(1)}}{T_0^3}\max_{m\sim M}|a_m|^2.
\end{align*}
Now it suffices to consider the integrals over dyadic intervals $[U,2U]$ with $N\leq U\leq T.$ These are bounded as in Lemma 2 of \cite{baker-harman-pintz} (using Watt's result and simple considerations), since translating the results there from the line $\Re(s)=\frac{1}{2}$ to the line $\Re(s)=1$ is an easy matter (and the bound in \cite{baker-harman-pintz} should be multiplied by $\max_{m\sim M}|a_m|^2$, as we do not assume $|a_m|\leq 1$).\end{proof}

\subsection{Sieve estimates}\label{subsec:sieve}

There are occasions in the proofs of Theorems \ref{t4} and \ref{t5} where our Dirichlet polynomials are too long, and we need a device for splitting them into shorter ones. This is enabled by Heath-Brown's identity and the decomposition resulting from it, which tells that either our Dirichlet polynomial can be replaced with a product of many polynomials, which is desirable, or it can be replaced with products of zeta sums, in which case we can make use of Watt's theorem.

\begin{definition}
A Dirichlet polynomial $M(s)=\sum_{m\sim M}\frac{a_m}{m^s}$ with $|a_n|\ll d_r(n)$ for fixed $r$ is called \textnormal{prime-factored} if, for each $A>0$, we have $|M(1+it)|\ll_A (\log M)^{-A}$ for $\exp((\log M)^{\frac{1}{3}})\leq t\leq M^{A\log \log M}$.
\end{definition}

\begin{lemma} (Heath-Brown's decomposition) \label{11} Let an integer $k\geq 1$ and a real number $\delta>0$ be fixed, and let $T\geq 2$. Define $P(s)=\sum_{P\leq p<P'}p^{-s}$ with $P\gg T^{\delta},P'\in \left[P+\frac{P}{\log T},2P\right]$. There exist Dirichlet polynomials $G_1(s),...,G_{L}(s)$ and a constant $C>0$ such that
\begin{align*}
|P(1+it)|\ll (\log^C X)(|G_1(1+it)|+\dotsm +|G_{L}(1+it)|)\quad \text{for all}\quad t\in [-T,T],
\end{align*}
with $L\leq \log^C X$, each $G_j(s)$ being of the form
\begin{align*}
G_j(s)=\prod_{i\leq J_j}M_i(s),\quad J_j\leq 2k,
\end{align*}
with $M_i(s)$ prime-factored Dirichlet polynomials (which depend on $j$), whose lengths satisfy $M_1\dotsm M_J=X^{1+o(1)},M_i\gg \exp\left(\frac{\log P}{\log \log P}\right)$. Additionally, each $M_i(s)$ with $M_i>X^{\frac{1}{k}}$ is a zeta sum.
\end{lemma}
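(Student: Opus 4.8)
The plan is to deduce Heath-Brown's decomposition (Lemma \ref{11}) from the classical Heath-Brown identity for $\Lambda(n)$ applied to a long Dirichlet polynomial over primes, followed by a dyadic splitting of the resulting variables. First I would recall Heath-Brown's identity: for $n\leq 2P$ and any fixed $k\geq 1$,
\begin{align*}
\Lambda(n)=\sum_{j=1}^{k}(-1)^{j-1}\binom{k}{j}\sum_{\substack{m_1\dotsm m_j n_1\dotsm n_j=n\\ m_i\leq (2P)^{1/k}}}\mu(m_1)\dotsm \mu(m_j)(\log n_1),
\end{align*}
so that $\sum_{P\leq n<P'}\Lambda(n)n^{-s}$ is a sum of $O_k(1)$ Dirichlet polynomials, each a product of at most $2k$ factors, each factor being either a Möbius sum $\sum_{m\sim M_i}\mu(m)m^{-s}$ with $M_i\leq (2P)^{1/k}$, a logarithmic sum $\sum_{n\sim M_i}(\log n)n^{-s}$, or a plain sum $\sum_{n\sim M_i}n^{-s}$; the product of the lengths is $P^{1+o(1)}$. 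Since $P(s)=\sum_{P\leq p<P'}p^{-s}$ differs from $\sum_{P\leq n<P'}\Lambda(n)n^{-s}$ only by the contribution of prime powers, which is $\ll P^{-1/2+o(1)}$ on the line $\Re(s)=1$ and hence entirely negligible (it is absorbed into, say, the trivial single polynomial $G_1$), it suffices to treat the von Mangoldt sum. Applying partial summation to convert $\sum\Lambda(n)n^{-s}$ to $\sum_p p^{-s}$ weights is not quite what is needed; instead I would keep the von Mangoldt decomposition and note the coefficient bound $|a_n|\ll d_{2k}(n)$, which is of the required shape $|a_n|\ll d_r(n)$.

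The next step is to split each of the $O_k(1)$ product polynomials dyadically in every variable: writing each $M_i(s)=\sum_{M_i\leq m<2M_i}(\cdots)m^{-s}$ with $M_i$ running over powers of $2$, the number of resulting tuples $(M_1,\dotsc,M_{J})$ with $M_1\dotsm M_J\asymp P$ is $\ll \log^{C}P\ll \log^C X$, which accounts for the bound $L\leq \log^C X$ and the loss of $(\log^C X)$ out front (from the triangle inequality over these $\ll \log^C X$ terms). For each dyadic tuple I must verify the stated length conditions. The lower bound $M_i\gg \exp(\log P/\log\log P)$ requires a combinatorial adjustment: whenever a factor $M_i$ in a given product is shorter than $\exp(\log P/\log\log P)$, I absorb it into an adjacent longer factor (Dirichlet convolution of the two coefficient sequences), reducing the number of factors; since each product starts with at most $2k$ factors whose lengths multiply to $P^{1+o(1)}$, after absorbing all the short ones the surviving factors all have length $\gg \exp(\log P/\log\log P)$, and the number of them is still $\leq 2k$. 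The zeta-sum condition is automatic from the identity: any factor arising from one of the $n_i$ variables (the non-Möbius variables) that has length $>X^{1/k}$ is literally of the form $\sum_{n\sim M_i}n^{-s}$ or $\sum_{n\sim M_i}(\log n)n^{-s}$, hence a zeta sum in the paper's terminology, while the Möbius factors all have length $\leq (2P)^{1/k}\leq X^{1/k}$ by construction, so no Möbius factor is ever required to be a zeta sum.

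Finally I must check that each factor $M_i(s)$ is \emph{prime-factored}, i.e. satisfies $|M_i(1+it)|\ll_A (\log M_i)^{-A}$ for $\exp((\log M_i)^{1/3})\leq t\leq M_i^{A\log\log M_i}$. For this I invoke Lemma \ref{17}: each $M_i$, before any absorption, is a sum of one of the four allowed types (Möbius, prime indicator, identity, logarithm), and Lemma \ref{17} gives $|M_i(1+it)|\ll \exp(-(\log M_i)^{1/10})$, which is stronger than any negative power of $\log M_i$, in exactly the stated range of $t$ (here I use $M_i\gg \exp(\log P/\log\log P)$, so $\log M_i \gg \log P/\log\log P \gg \delta \log T/\log\log T$, hence the range $\exp((\log M_i)^{1/3})\leq t\leq M_i^{A\log\log M_i}$ comfortably contains $[\exp((\log T)^{1/3}),\,T]$ after possibly enlarging $A$, and the $|t|\leq N^{A'\log\log N}$ hypothesis of Lemma \ref{17} is met). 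For a factor $M_i$ produced by absorbing shorter pieces, its coefficient is a convolution of the allowed types, and Lemma \ref{17} as stated handles exactly such products $n_1\dotsm n_k$ of the four types, so the pointwise bound persists. The main obstacle is bookkeeping rather than depth: one must organize the absorption of short factors so that simultaneously (i) the number of surviving factors stays $\leq 2k$, (ii) every surviving factor has length $\gg \exp(\log P/\log\log P)$, (iii) every surviving factor of length $>X^{1/k}$ is a pure zeta sum (which forces one to never absorb a zeta sum into a Möbius factor in a way that would create a long non-zeta-sum — but since absorbing only ever shortens nothing and Möbius factors are already short, one absorbs short pieces into long ones and a long surviving factor inherits the zeta-sum structure from its long constituent), and (iv) the total length stays $X^{1+o(1)}$; verifying that a consistent such scheme exists, and that Lemma \ref{17} applies uniformly across all $\ll\log^C X$ cases, is the crux.
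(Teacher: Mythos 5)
Your overall route is the paper's: Heath-Brown's identity for $\Lambda$, dyadic splitting of the variables, the observation that the constraint $m_i\leq (2P)^{1/k}$ on the M\"obius variables forces every factor of length $>X^{1/k}$ to come from an unweighted (or $\log$-weighted) variable and hence be a zeta sum, and Lemma \ref{17} for the prime-factored property. The one place where you genuinely diverge is the treatment of factors shorter than $\exp(\log P/\log\log P)$, and there your mechanism fails as described. You propose to absorb such a short factor into an adjacent long one by Dirichlet convolution, asserting that a long surviving factor ``inherits the zeta-sum structure from its long constituent.'' It does not: the convolution of $\sum_{n\sim N}n^{-s}$ with even one short M\"obius factor is no longer of the form $\sum_{n\sim N'}n^{-s}$ or $\sum_{n\sim N'}(\log n)n^{-s}$, and the zeta-sum property is not cosmetic here --- it is exactly what licenses the later applications of Watt's theorem (Lemma \ref{16}), which requires the fourth-moment factor to be literally a partial sum of $\zeta$ or $\zeta'$. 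Nor can you always route the absorption away from the zeta sums: there are terms of the identity in which every factor except a single long zeta sum is short, so there is nothing else to absorb into. You flag this bookkeeping as ``the crux'' but leave it unresolved, and as stated it is a genuine gap.

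The paper's resolution is simpler and avoids convolution altogether: a factor of length $M_i\ll\exp(\log P/\log\log P)$ is estimated trivially on the line $\Re(s)=1$ (its coefficients are bounded, so $|M_i(1+it)|\ll\sum_{n\sim M_i}(\log n)n^{-1}\ll\log X$) and then simply deleted from the product $G_j$; since there are at most $2k$ such factors, their total length is $P^{o(1)}$, so the product of the surviving lengths is still $X^{1+o(1)}$, and the multiplicative loss is absorbed into the $\log^C X$ prefactor. With that substitution your argument goes through: the surviving factors are untouched single-type polynomials, so Lemma \ref{17} applies to them directly and the long ones remain pure zeta sums. Two smaller points: you still need the partial summation you explicitly set aside, since Heath-Brown's identity produces $\sum_p(\log p)p^{-s}$ rather than $\sum_p p^{-s}$ (the paper performs this reduction first); and prime powers, as you say, contribute $O(P^{-1/2+o(1)})$ on the $1$-line and are harmless.
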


\begin{proof} For a similar bound, see Harman's book \cite[Chapter 7]{harman-sieves}. It suffices to prove an analogous result for the polynomial $\sum_{P\leq n<P'}\Lambda(n)n^{-s}$ and use summation by parts. We take $f(n)=n^{-1-it}1_{[P,P']}(n)$ in the general Heath-Brown identity \cite{heath-brown-vaughan} for $\sum_{n\leq N}f(n)\Lambda(n)$, splitting each resulting variables into dyadic intervals, and  separating the variables with Perron's formula. The summation condition in Heath-Brown's identity guarantees that of the arising polynomials only the zeta sums can have length $>X^{\frac{1}{k}}$. If there are any polynomials of length $\ll \exp\left(\frac{\log P}{\log \log P}\right)$, these can simply be estimated trivially. The fact that the remaining polynomials of length $\gg \exp\left(\frac{\log P}{\log \log P}\right)$ are prime-factored follows from the fact that they have as their coefficients one of the sequences $(1), (\log n)$ and $(\mu(n))$, so that Lemma \ref{17} gives a pointwise saving of $\ll_A (\log P)^{-A}$. \end{proof}

There is one more lemma that we need on the coefficients of Dirichlet polynomials arising from almost primes. We need to bound the following quantities that are related to the quantities occurring in the improved mean value theorem for Dirichlet polynomials.

\begin{definition} For any sequence $(a_n)$ of complex numbers, set $X_1=\exp(\frac{\log X}{(\log \log X)^4})$ and
\begin{align*}
S_1(X,(a_n))&=\max_{\frac{X}{X_1}\leq Y\leq 4X\atop 1\leq H\leq \log^{10} X}H\sum_{Y\leq n\leq Y+\frac{Y}{H}}\frac{|a_n|^2}{n},\\
S_2(X,(a_n))&=\max_{\frac{X}{X_1}\leq Y\leq 4X\atop 1\leq H\leq \log^{10}X}H\sum_{1\leq h\leq \frac{X}{T}}\sum_{Y\leq n\leq Y+\frac{Y}{H}}\frac{|a_n||a_{n+h}|}{n}.
\end{align*}
\end{definition}

We get bounds of size essentially $\frac{1}{\log X}$ and $\frac{X}{T\log^2 X}$ for $S_1(X,(a_n))$ and $S_2(X,(a_n))$, respectively, under the assumptions of the next lemma.

\begin{lemma}\label{15}
Let $Z_r\geq \dotsm \geq Z_1\geq 1$ for a fixed $r$ with $Z_r\geq \exp(\frac{\log X}{(\log \log X)^3})$, $Z_r\leq z\leq 4X$, and
\begin{align*}
\mathcal{Q}=\left\{n\leq 4X:n=p_1\dotsm p_rm,\,\,p_i\in [Z_i,Z_i^2],\,\,(m,\mathcal{P}(z))=1)\right\}.
\end{align*}
Let $|a_n|\leq 1_{\mathcal{Q}}(n)$, and let $S_1(X,(a_n))$ and $S_2(X,(a_n))$ be as defined above. Then
\begin{align*}
S_1(X,(a_n))\ll \frac{1}{\log z}\quad \text{and}\quad S_2(X,(a_n))\ll \frac{1}{\log^2 z}\cdot \frac{X}{T}.
\end{align*}
\end{lemma}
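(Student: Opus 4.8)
The plan is to reduce both $S_1$ and $S_2$ to standard upper-bound sieve estimates for the count of integers in $\mathcal{Q}$ lying in a short interval, and then to exploit the specific multiplicative structure of $\mathcal{Q}$ — namely that every element has a prescribed prime $p_r\in[Z_r,Z_r^2]$ together with an unsifted factor coprime to $\mathcal{P}(z)$ — to extract the two (respectively one) logarithmic savings. For $S_1$, since $|a_n|^2\le 1_{\mathcal{Q}}(n)$, it suffices to bound $H\sum_{Y\le n\le Y+Y/H}1_{\mathcal{Q}}(n)/n$ uniformly for $X/X_1\le Y\le 4X$ and $1\le H\le\log^{10}X$; replacing $1/n$ by $\asymp 1/Y$ and pulling out the factor $H/Y$, I need $\#\{n\in\mathcal{Q}\cap[Y,Y+Y/H]\}\ll \tfrac{Y}{H\log z}$. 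Writing $n=p_1\cdots p_r m$, I fix the primes $p_1,\dots,p_r$ (there are $O(1)$ dyadic choices and the sums over them converge, contributing a bounded constant after one uses $\sum_{p\in[Z_i,Z_i^2]}1/p\ll 1$), which forces $m$ into an interval of length $\ll Y/(H p_1\cdots p_r)$ and requires $(m,\mathcal{P}(z))=1$; the fundamental lemma of the sieve (or Brun/Selberg; cf. Harman's book \cite[Chapter 1]{harman-sieves}) gives $\#\{m\}\ll \tfrac{Y}{H p_1\cdots p_r}\cdot\prod_{p<z}(1-1/p)\ll \tfrac{Y}{H p_1\cdots p_r\log z}$. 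Summing over the $O(1)$ prime tuples and using $\sum 1/(p_1\cdots p_r)\ll 1$ yields $S_1\ll 1/\log z$. One subtlety to check: the sieve is applied with sifting level $z$ against an interval of length $\gg Y/(Hp_1\cdots p_r)$, and since $Hp_1\cdots p_r\le \log^{10}X\cdot\prod Z_i^2$ while $z\ge Z_r\ge\exp(\log X/(\log\log X)^3)$, the interval length is comfortably a large power of $z$, so the fundamental lemma applies with the claimed main term and a negligible error.

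For $S_2$ the strategy is the same but the correlation sum $\sum_{Y\le n\le Y+Y/H}|a_n||a_{n+h}|/n$ must be bounded by $\ll \tfrac{1}{H\log^2 z}$ for each fixed $h\le X/T$, so that the outer sum over $1\le h\le X/T$ contributes the factor $X/T$. Here I again drop $1/n\asymp 1/Y$, factor $n=p_1\cdots p_r m$, and observe that $1_{\mathcal{Q}}(n)1_{\mathcal{Q}}(n+h)$ forces both $m$ and $m'$ with $n+h=p_1'\cdots p_r'm'$, $(m',\mathcal{P}(z))=1$, to avoid all primes below $z$; after fixing the $O(1)$ choices of the two prime tuples and reducing to a linear relation $p_1\cdots p_r m - p_1'\cdots p_r'm' = -h$ (or, in the diagonal case where the tuples coincide, $m-m'=-h/(p_1\cdots p_r)$, which is the genuinely two-dimensional sieve problem), I apply a two-dimensional sieve — Selberg's sieve for the linear form, or the standard bound for $\#\{m\le Y/(p_1\cdots p_r): m\equiv 0\ (\text{sieved}),\ m+h'\ \text{sieved}\}$ — which produces the expected density $\prod_{p<z}(1-1/p)^2\ll 1/\log^2 z$, times the extra singular-series factor $\prod_{p\mid h'}(1-1/p)^{-1}=h'/\varphi(h')$. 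The point is that $\sum_{1\le h\le X/T}\tfrac{h'/\varphi(h')}{\text{(denominators)}}\ll X/T$ (the divisor-type factor $h/\varphi(h)$ averages to a constant), giving $S_2\ll \tfrac{1}{\log^2 z}\cdot\tfrac{X}{T}$. I would handle the off-diagonal case (distinct prime tuples, so $p_1\cdots p_r m\ne p_1'\cdots p_r' m'$ identically) either by absorbing it similarly, noting it only gains extra savings from the two independent sieve conditions, or by remarking that such terms are even smaller.

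The main obstacle I anticipate is purely bookkeeping: verifying uniformly over all admissible $Y,H,h$ — and over all $O(1)$ configurations of the prime tuples, including whether some $Z_i$ coincide or whether $p_i=p_i'$ — that the sieve hypotheses (interval/progression long enough relative to the sifting level $z$, error terms under control, Bombieri–Vinogradov not needed since $z$ is tiny compared to $Y$) genuinely hold, and that the constants do not secretly depend on $r$ in a way that breaks the claim — but since $r$ is fixed this is fine. The one place requiring a little care is ensuring that after fixing the prime variables the residual interval for $m$ still has length exceeding a fixed power of $z$; this is where the hypothesis $Z_r\ge\exp(\log X/(\log\log X)^3)$ together with $H\le\log^{10}X$ and $Y\ge X/X_1=X\exp(-\log X/(\log\log X)^4)$ is used, and it is exactly the balance that makes the fundamental lemma applicable with a power-saving error. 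No genuinely new idea is needed beyond the classical sieve toolbox.
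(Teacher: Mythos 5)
Your approach is genuinely different from the paper's, and unfortunately it has a gap. You propose to write $n = p_1\cdots p_r m$, fix the prime tuple $(p_1,\dots,p_r)$, sieve the residual variable $m$ over an interval of length $\asymp Y/(Hp_1\cdots p_r)$, and then sum over the prime tuples using $\sum_{p\in[Z_i,Z_i^2]}1/p\ll 1$. The obstruction you dismissed in your "one subtlety" paragraph is real: the interval for $m$ need not be "a large power of $z$." Indeed, $\prod Z_i$ can be as large as $4X$ (the only constraint from $\mathcal{Q}\neq\emptyset$ is $\prod Z_i\le 4X$), and since $p_i$ may be close to $Z_i^2$, the product $p_1\cdots p_r$ can be comparable to $X$, making the interval for $m$ shorter than $1$. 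Even when this does not happen, the bookkeeping breaks because the fundamental lemma's error term (something like $z^{O(1)}$ or $2^{O(\log z/\log\log z)}$, independent of the interval length) must be summed over all admissible prime tuples $(p_1,\dots,p_r)$, and there are far too many of them — you can control the main terms via $\sum 1/p_i\ll 1$, but the accumulated error terms swamp the target bound $Y/(H\log z)$. Your $S_2$ argument inherits the same issue, plus the additional worry of applying a two-dimensional sieve on intervals that may be empty.

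The paper's proof avoids all of this with a single observation you did not make: every $n\in\mathcal{Q}$ is automatically coprime to $\mathcal{P}(z)/\Pi$, where $\Pi=\prod\{p<z:\ p\in\bigcup_i[Z_i,Z_i^2]\}$. (The $p_i<z$ all divide $\Pi$, and $m$ contributes no primes below $z$.) Since $r$ is fixed, removing $\Pi$ from the sieving modulus only costs a bounded factor in the sieve density. The paper then applies Brun's sieve \emph{once}, directly to $n$ over $[Y,Y+Y/H]$ (for $S_1$) or to the pair $n,n+h$ (for $S_2$, producing the expected $h/\varphi(h)$ singular factor), and the sieve error is a single $z^{1/2}\le(4X)^{1/2}$, which is comfortably dominated by the main term because $Y/H\gg X^{1-o(1)}$. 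To repair your proof you would essentially have to rediscover this reformulation; without it, the decomposition-and-sum strategy does not close.
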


\begin{remark}
Notice that we could also take as the set $\mathcal{Q}$ the set 
\begin{align*}
\mathcal{Q}'=\left\{n\leq 4X:n=p_1\dotsm p_rm,\,\,p_i\in [Z_i,Z_i^2],\,\,(m,\mathcal{P}(p_r))=1\right\}
\end{align*}
or the set
\begin{align*}
\mathcal{Q}''=\left\{n\leq 4X:n=p_1\dotsm p_r,\,\,p_i\in [Z_i,Z_i^2]\right\}.
\end{align*}
Indeed, the sizes of $\mathcal{Q}'$ and $\mathcal{Q}''$ can be bounded by sizes of sets of the form given in the lemma (with the parameter $z=Z_r$ or $z=X^{\frac{1}{r-1}}$). This observation will be used subsequently.
\end{remark}

\begin{proof} Let $S(A,\mathbb{P},z)$ count the numbers in $A$ having no prime factors below $z$, and let $\Pi$ be the product of all primes in $\bigcup_{i=1}^{r}[Z_i,Z_i^2]\cap[1,z]$. Brun's sieve yields
\begin{align*}
S_1(X,(a_n))&\ll \max_{\frac{X}{X_1}\leq Y\leq 4X\atop 1\leq H\leq \log^{10}X} \frac{H}{Y}\cdot \left|\left[Y,Y+\frac{Y}{H}\right]\cap \mathcal{Q}\right|\\
&\ll \max_{\frac{X}{X_1}\leq Y\leq 4X\atop 1\leq H\leq \log^{10}X} \frac{H}{Y}\cdot \left|\left\{n\in \left[Y,Y+\frac{Y}{H}\right]:\,\left(n,\frac{\mathcal{P}(z)}{\Pi}\right)=1\right\}\right|\\
&\ll \max_{\frac{X}{X_1}\leq Y\leq 4X\atop 1\leq H\leq \log^{10}X}\frac{H}{Y}\cdot\left(\frac{Y}{H\log z}+z^{\frac{1}{2}}\right)\\
&\ll \frac{1}{\log z},
\end{align*}
since $z^{\frac{1}{2}}\leq (4X)^{\frac{1}{2}}\ll \frac{Y}{H\log^2 z}$.\\

Furthermore, Brun's sieve also yields
\begin{align*}
S_2(X,(a_n))&\ll \max_{X_1\leq Y\leq 4X\atop 1\leq H\leq \log^{10}X}\frac{H}{Y}\sum_{1\leq h\leq \frac{X}{T}} \left|\left\{n\in \left[Y,Y+\frac{Y}{H}\right]:\left(n(n+h),\frac{\mathcal{P}(z)}{\Pi}\right)=1\right\}\right|\\
&\ll \max_{X_1\leq Y\leq 4X\atop 1\leq H\leq \log^{10}X}\frac{H}{Y}\cdot \sum_{1\leq h\leq \frac{X}{T}}\frac{h}{\varphi(h)}\left(\frac{Y}{H\log^2 z}+z^{\frac{1}{2}}\right)\\
&\ll \frac{1}{\log^2 z}\cdot\frac{X}{T},
\end{align*}
by the elementary bound $\sum_{m\leq M}\frac{m}{\varphi(m)}\ll M$. This proves the statement.\end{proof}

\section{Mean squares of Dirichlet polynomials}

With all the necessary lemmas available, we are ready to present the propositions that quickly lead to Theorem \ref{t4} and are also necessary in proving Theorem \ref{t5}.

\begin{proposition} \label{p1} Let $X\geq 1, T\geq T_0=X^{0.01},0\leq \alpha_1\leq 1$ and $1\leq P\ll X^{o(1)}$, where $P$ is a function of $X$. Define 
\begin{align*}
K(s)=\sum_{n\sim \frac{X}{P}}\frac{a_n}{n^s}\quad\text{and}\quad P(s)=\sum_{p\sim P}\frac{b_p}{p^s},
\end{align*}
where $a_n$ and $b_p$ are arbitrary complex numbers. Denoting
\begin{align*}
\mathcal{T}_1=\{t\in [T_0,T]:|P(1+it)|\leq P^{-\alpha_1}\}
\end{align*}
we have
\begin{align*}
\int_{\mathcal{T}_1}|K(1+it)P(1+it)|^2dt\ll \frac{T}{X}\cdot P^{1-2\alpha_1}\left(S_1\left(\frac{X}{P},(a_n)\right)+S_2\left(\frac{X}{P},(a_n)\right)\right).
\end{align*}
\end{proposition}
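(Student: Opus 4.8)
The plan is to split the integration variable $t$ over $\mathcal{T}_1$ into dyadic (or sub-dyadic) pieces and, on each piece, combine the smallness of $|P(1+it)|$ coming from the definition of $\mathcal{T}_1$ with a mean-value bound for $|K(1+it)|^2$. Since $P\ll X^{o(1)}$, the polynomial $K(s)$ has length $\tfrac{X}{P} = X^{1+o(1)}$, which is comparable to or larger than $T$ only when $T$ is not too large; in the regime $T \geq T_0 = X^{0.01}$ we should expect $\tfrac{X}{P}$ to dominate $T$ for the ranges that matter, so the improved mean value theorem (Lemma~\ref{2}) is the right tool rather than the crude Lemma~\ref{12}. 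Concretely, the first step is to write
\begin{align*}
\int_{\mathcal{T}_1}|K(1+it)P(1+it)|^2\,dt \leq P^{-2\alpha_1}\int_{\mathcal{T}_1}|K(1+it)|^2\,dt \leq P^{-2\alpha_1}\int_{T_0}^{T}|K(1+it)|^2\,dt,
\end{align*}
using $|P(1+it)|\leq P^{-\alpha_1}$ on $\mathcal{T}_1$. So everything reduces to bounding the mean square of $K$ on $[T_0,T]$.

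For the mean square of $K$, I would apply Lemma~\ref{2} with $N = \tfrac{X}{P}$. After shifting to the line $\Re(s)=0$ (absorbing the $n^{-1}$ weights, since the coefficients are $a_n/n$ on the line $\Re(s)=1$, i.e.\ $a_n \cdot n^{-1} \cdot n^{-it}$), Lemma~\ref{2} gives
\begin{align*}
\int_{-T}^{T}|K(1+it)|^2\,dt \ll T\sum_{n\sim X/P}\frac{|a_n|^2}{n^2} + T\sum_{1\leq h\leq \frac{X/P}{T}}\sum_{\substack{m-n=h\\ m,n\sim X/P}}\frac{|a_m||a_n|}{mn}.
\end{align*}
The first sum is $\ll \tfrac{P}{X}\sum_{n\sim X/P}\tfrac{|a_n|^2}{n}$, which is $\ll \tfrac{P}{X} S_1(\tfrac{X}{P},(a_n))$ after choosing $H=1$ (or the relevant $H$) in the definition of $S_1$; the second is $\ll \tfrac{P}{X}\sum_{h}\sum_{n}\tfrac{|a_m||a_n|}{n}$, which matches the shape of $S_2(\tfrac{X}{P},(a_n))$ with $H=1$ and the right dyadic value of $Y$. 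Combining the two displays and using $P^{-2\alpha_1}\cdot \tfrac{P}{X} = \tfrac{P^{1-2\alpha_1}}{X}$, then multiplying by $T$, gives exactly the claimed bound $\ll \tfrac{T}{X}P^{1-2\alpha_1}\bigl(S_1(\tfrac{X}{P},(a_n)) + S_2(\tfrac{X}{P},(a_n))\bigr)$.

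The main subtlety — and the only place requiring care — is matching the summation ranges so that they genuinely fall under the maxima defining $S_1$ and $S_2$. The definition of $S_1,S_2$ takes a maximum over $\tfrac{X'}{X'_1}\leq Y\leq 4X'$ and $1\leq H\leq \log^{10}X'$ (with $X' = \tfrac{X}{P}$ here), while Lemma~\ref{2} produces sums over $n\sim \tfrac{X}{P}$, i.e.\ dyadic blocks, with inner sums over $1\leq h\leq \tfrac{X/P}{T}$. Since $n\sim\tfrac{X}{P}$ is a single dyadic range it can be covered by $O(1)$ intervals of the form $[Y,Y+\tfrac{Y}{H}]$ with $H\asymp 1$ (or split into $\log^{10} X$ pieces if one wants $H$ large, which only helps), and the condition $T\geq T_0 = X^{0.01}$ guarantees $\tfrac{X/P}{T}\leq \tfrac{X}{T}$, so the $h$-range is contained in the one used in $S_2$. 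One should also check $\tfrac{X}{P}$ is large enough that $X'_1 = \exp(\tfrac{\log(X/P)}{(\log\log(X/P))^4})$ makes sense and the dyadic block lies in $[\tfrac{X'}{X'_1},4X']$; this is immediate since $P = X^{o(1)}$. With these bookkeeping points dispatched, no genuine analytic difficulty remains — the content is entirely in Lemma~\ref{2} plus the pointwise bound on $\mathcal{T}_1$.
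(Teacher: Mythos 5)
Your proposal is correct and follows exactly the paper's own argument: pull out the pointwise bound $|P(1+it)|\leq P^{-\alpha_1}$ on $\mathcal{T}_1$, apply the improved mean value theorem (Lemma~\ref{2}) to $K$, and match the resulting diagonal and off-diagonal sums with $S_1(\frac{X}{P},(a_n))$ and $S_2(\frac{X}{P},(a_n))$. The bookkeeping you flag (the extra $n^{-1}$ weight and the $h$-range $1\leq h\leq \frac{X/P}{T}$ matching the one in $S_2$) is exactly the right thing to check, and it goes through just as you say.
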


\begin{proof} The improved mean value theorem (Lemma \ref{2}) and definition of $\mathcal{T}_1$ give
\begin{align*}
\int_{\mathcal{T}_1}|K(1+it)P(1+it)|^2dt&\ll P^{-2\alpha_1}\int_{\mathcal{T}_1}|K(1+it)|^2dt\\
&\ll P^{-2\alpha_1}\left(T\sum_{k\sim  \frac{X}{P}}|a_k|^2+T\sum_{1\leq h\leq \frac{X}{PT}}\sum_{k,k'\sim \frac{X}{P}\atop k-k'=h}|a_k||a_{k'}|\right)\\
&\ll P^{-2\alpha_1}\left(\frac{TP}{X}S_1\left(\frac{X}{P},(a_n)\right)+\frac{TP}{X}S_2\left(\frac{X}{P},(a_n)\right)\right)\\
&= \frac{T}{X}\cdot P^{1-2\alpha_1}\left( S_1\left(\frac{X}{P},(a_n)\right)+S_2\left(\frac{X}{P},(a_n)\right)\right),
\end{align*}
which was the claim.\end{proof}

\begin{proposition} \label{p2} Let $X\geq 1, T\geq T_0=X^{0.01}$ and $1\leq P\ll X^{o(1)}$. Also let $0\leq \alpha_1,\alpha_2\leq 1$ and let the Dirichlet polynomials $K(s)$ and $M(s)$ with $K=\frac{X}{M}\gg X^{\varepsilon}$ be
\begin{align*}
K(s)=\sum_{n\sim K}\frac{a_n}{n^s}\quad\text{and}\quad M(s)=\sum_{m\sim M} \frac{c_m}{m^s},
\end{align*}
where $|c_m|\leq d_r(m)$ for fixed $r$, and $|a_n|= 1_{S}(n)$ for some set $S$ whose elements have at most $r$ prime factors from $[P,2P]$ and have no prime factors in $[1,X^{0.01}]\setminus \bigcup_{i=1}^{r}[Z_i,Z_i^2]$ for some $Z_i\geq 1$. Write 
\begin{align*}
P(s)&=\sum_{p\sim P}\frac{b_p}{p^s}\quad\text{with}\quad |b_p|\leq 1\quad \text{and}\\
\mathcal{T}&=\{t\in [T_0,T]:\,\,|P(1+it)|\geq P^{-\alpha_1}\,\, \text{and}\,\, |M(1+it)|\leq M^{-\alpha_2}\}.
\end{align*}
We have
\begin{align*}
\int_{\mathcal{T}}|K(1+it)M(1+it)|^2dt\ll M^{-2\alpha_2}P^{(2+10\varepsilon)\alpha_1\ell}\cdot (\ell!)^{1+o(1)}\cdot \left(\frac{T}{X}\cdot \frac{1}{\log X}+\frac{1}{\log^2 X}\right),
\end{align*}
where $\ell=\lceil\frac{\log \frac{X}{K}}{\log P}\rceil$. 
\end{proposition}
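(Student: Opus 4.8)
The plan is to strip the two pointwise constraints defining $\mathcal{T}$ one after the other, and then invoke the improved mean value theorem. On $\mathcal{T}$ we have $|M(1+it)|\le M^{-\alpha_2}$, so at once
\begin{align*}
\int_{\mathcal{T}}|K(1+it)M(1+it)|^2\,dt\le M^{-2\alpha_2}\int_{\mathcal{T}}|K(1+it)|^2\,dt .
\end{align*}
For the constraint $|P(1+it)|\ge P^{-\alpha_1}$ I would use the standard device of raising $P(s)$ to the power $\ell$: on $\mathcal{T}$ one has $1\le P^{2\alpha_1\ell}|P(1+it)^{\ell}|^2$, hence
\begin{align*}
\int_{\mathcal{T}}|K(1+it)|^2\,dt\le P^{2\alpha_1\ell}\int_{T_0}^{T}|K(1+it)P(1+it)^{\ell}|^2\,dt ,
\end{align*}
where I have also enlarged the domain of integration to all of $[T_0,T]$. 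The exponent $\ell=\lceil\log(X/K)/\log P\rceil$ is the smallest one for which $P^{\ell}\ge X/K$, i.e.\ for which $G(s):=K(s)P(s)^{\ell}$ has length $KP^{\ell}\ge X$; this lower bound on the length of $G$ is exactly what forces the coefficient sums below to come out of size $\asymp\frac{1}{X\log X}$ rather than larger.

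It remains to bound $\int_{T_0}^{T}|G(1+it)|^2\,dt$ via Lemma \ref{2}. Writing $G(s)=\sum_n g_n n^{-s}$, the coefficient $g_n=\sum_{n=kp_1\cdots p_\ell}a_k b_{p_1}\cdots b_{p_\ell}$ ($k$ in the support of $K$, $p_i\sim P$) satisfies $|g_n|\le g_n^{\ast}$, where $g_n^{\ast}$ counts the ``marked factorisations'' $n=kp_1\cdots p_\ell$ with $|a_k|=1$ and $p_i\sim P$. Since every element of $S$ carries at most $r$ prime factors from $[P,2P]$, every such $n$ has at most $\ell+r$ prime factors there, so $g_n^{\ast}\le\ell!\binom{\ell+r}{r}$; moreover $g_n^{\ast}=0$ unless $n$ lies in a set $\mathcal{Q}^{\ast}$ which, like the sets $\mathcal{Q}$ of Lemma \ref{15}, consists of integers coprime to every prime below $X^{0.01}$ outside a bounded union of short intervals, so the sieve estimates underlying Lemma \ref{15} (Brun's sieve) apply to $g^{\ast}$ and to $1_{\mathcal{Q}^{\ast}}$. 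After splitting $G$ into the $O(\ell)$ dyadic blocks covering $[KP^{\ell},2^{\ell+1}KP^{\ell})$ — a harmless loss — Lemma \ref{2} bounds the integral, for each block of length $N_i\ge X$, by a diagonal term $T\sum_{n\sim N_i}|g_n|^2n^{-2}$ plus an off-diagonal term $T\sum_{1\le h\le N_i/T}\sum_{m-n=h,\,m,n\sim N_i}|g_m||g_n|(mn)^{-1}$.

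The crux is to spend only one factor $\ell!$ on each of these terms. For the diagonal term I would bound $|g_n|^2\le(\max_m g_m^{\ast})\,g_n^{\ast}\le\ell!\binom{\ell+r}{r}\,g_n^{\ast}$ and then factorise over the defining variables:
\begin{align*}
\sum_{n\sim N_i} g_n^{\ast}n^{-2}\le N_i^{-1}\sum_{n}g_n^{\ast}n^{-1}=N_i^{-1}\Big(\sum_{k}|a_k|k^{-1}\Big)\Big(\sum_{p\sim P}p^{-1}\Big)^{\ell}\ll\frac{1}{N_i\log X},
\end{align*}
using Lemma \ref{15}/Brun for the $k$-sum and $\sum_{p\sim P}p^{-1}\le 1$; summing over the $O(\ell)$ blocks and using $N_i\ge X$ gives $T\sum_n|g_n|^2n^{-2}\ll(\ell!)^{1+o(1)}\frac{T}{X\log X}$. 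For the off-diagonal term I would instead bound $|g_m||g_n|\le\ell!\binom{\ell+r}{r}\,1_{\mathcal{Q}^{\ast}}(m)\,g_n^{\ast}$, expand $g_n^{\ast}$ back into its sum over $k$ and $(p_1,\dots,p_\ell)$, and observe that for a fixed choice of the primes $p_i$ the remaining condition ``$k\in S$ and $kp_1\cdots p_\ell+h\in\mathcal{Q}^{\ast}$'' is a two-variable sieve in the single long variable $k$, which Lemma \ref{15}/Brun controls with the expected saving of $\frac{h}{\varphi(h)}\frac{1}{\log^2 X}$ — and, crucially, with no further factorial. Summing over $h\le N_i/T$, over the primes (again with $(\sum_p p^{-1})^{\ell}\le 1$), and over the blocks yields $T\sum_h\sum_{m-n=h}|g_m||g_n|(mn)^{-1}\ll(\ell!)^{1+o(1)}\frac{1}{\log^2 X}$. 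Collecting the two bounds, absorbing the remaining losses of shape $O(\ell)$, $\binom{\ell+r}{r}$, $2^{\ell}$ and bounded sieve constants into the slack between $P^{2\alpha_1\ell}$ and $P^{(2+10\varepsilon)\alpha_1\ell}$ (legitimate since in the applications $P\to\infty$, and harmless when $\ell$ stays bounded), and multiplying back by $M^{-2\alpha_2}P^{2\alpha_1\ell}$, gives the claim. (When $N_i/T<1$ the off-diagonal term is absent and Lemma \ref{2} reduces to Lemma \ref{12}, as in the reduction of Subsection \ref{subsec:reduction}.)

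The main obstacle is precisely this factorial bookkeeping: a direct application of Lemma \ref{15} to the sequence $(g_n)$ would cost $(\ell!)^2$, and one recovers the single $\ell!$ only by isolating ``one ordering's worth'' of overcounting in the pointwise bound $\max_m g_m^{\ast}\le\ell!\binom{\ell+r}{r}$ and then routing every remaining average through sieves in a single auxiliary variable over a long interval, where no further factorial is generated. A secondary, more routine point is verifying that $\mathcal{Q}^{\ast}$ genuinely falls under the scope of Lemma \ref{15} (so that the small-prime windows coming from the $Z_i$ and from $[P,2P]$ cost only bounded factors) and checking the sieve error terms against the lengths $N_i\ge X$.
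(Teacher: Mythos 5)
Your proposal follows essentially the same route as the paper's proof: lift out $M$ via its pointwise bound, raise $P$ to the power $\ell$ so the resulting polynomial has length $\ge X$, apply the improved mean value theorem (Lemma \ref{2}), and then spend only one factor of $\ell!$ by bounding one coefficient pointwise by $(\ell+r)!\le (\ell!)^{1+o(1)}$ and expanding the other into a Brun sieve over a single long variable. The bookkeeping you flag — absorbing $2^\ell$ (from $\frac{k}{\varphi(k)}$ with $k=p_1\cdots p_\ell$), $\binom{\ell+r}{r}$, and the $O(\ell^2)$ loss from splitting — is handled in the paper exactly by pushing these into $(\ell!)^{1+o(1)}$ rather than into the $P^{10\varepsilon\alpha_1\ell}$ slack, but either works.
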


\begin{remark}
For products of three primes, our variables are picked so that the bound given by this proposition saves $X^{\varepsilon}$ over the trivial bound. However, for products of $k\geq 4$ primes, our savings are much more modest, and the factor $\frac{T}{X}\cdot \frac{1}{\log X}+\frac{1}{\log^2 X}$ becomes necessary.
\end{remark}

\begin{proof}  This result is inspired by Lemma 13 in \cite{matomaki}. Using the fact that $|M(1+it)|^2\leq M^{-2\alpha_2}(P^{\alpha_1}|P(1+it)|)^{2\ell}$ for $t\in \mathcal{T}$ and splitting polynomials into shorter ones, we have
\begin{align}\label{eq34}
\int_{\mathcal{T}}|K(1+it)M(1+it)|^2dt&\ll  M^{-2\alpha_2}P^{2\alpha_1\ell}\int_{\mathcal{T}}|K(1+it)P(1+it)^{\ell}|^2dt\nonumber\\
&\ll M^{-2\alpha_2}P^{2\alpha_1\ell}\ell^2\int_{ \mathcal{T}}|A(1+it)|^2dt,
\end{align}
where
\begin{align*}
A(s)=\sum_{n\sim Y}\frac{A_n}{n^s}
\end{align*}
for some $KP^{\ell}\leq Y\leq 2K(2P)^{\ell}$ (so $X\leq Y\leq 2^{\ell}PX$), the coefficients $A_n$ satisfying
\begin{align*}
|A_n|\leq\sum_{\substack{n=p_1\dotsm p_{\ell}m\\p_i\sim P\\m\sim K}}|a_m|.
\end{align*}
By the improved mean value theorem (Lemma \ref{2}), we see that $\eqref{eq34}$ is bounded by
\begin{align*}
\ll M^{-2\alpha_2}P^{2\alpha_1\ell}\ell^2\left(T\sum_{n\sim Y}\left|\frac{A_n}{n}\right|^2+T\sum_{1\leq h\leq \frac{Y}{T}}\sum_{m-n=h}\frac{|A_m||A_n|}{mn}\right)
\end{align*}
Note that $A_n\neq 0$ implies that $n$ has at most $\ell+r$ prime factors from $[P,2P]$ and that $n$ is coprime to
\begin{align*}
\Pi=\prod_{\substack{p\leq X^{0.01}\\p\not \in \bigcup_{i=1}^r[Z_i,Z_i^2]\cup [P,2P]}}p.
\end{align*}
Consequently, $|A_n|\leq (\ell+r)!$, and so
\begin{align*}
\sum_{n\sim Y}\left|\frac{A_n}{n}\right|^2&\leq \frac{1}{Y}\cdot (\ell+r)!\sum_{n\sim Y}\frac{|A_n|}{n}\\&\ll \frac{1}{Y}(\ell!)^{1+o(1)}\sum_{m\sim K}\frac{|a_m|}{m}\sum_{p_1,...,p_\ell\sim P}\frac{1}{p_1\dotsm p_{\ell}}\\
&\ll (\ell!)^{1+o(1)}\cdot \frac{1}{Y}\sum_{m\sim K\atop (m,\Pi)=1}\frac{|a_m|}{m}\\
&\ll (\ell!)^{1+o(1)}\cdot \frac{1}{X\log X},
\end{align*}
where the last step comes from Brun's sieve and the facts that $Y\geq X$ and $K\gg X^{\varepsilon}$.\\

To deal with the second sum arising from the improved mean value theorem, notice that by Brun's sieve the number of $n\leq y$ with $(n(kn+h),\Pi)=1$ is $\ll \frac{y}{\log^2 y}\frac{hk}{\varphi(hk)}$ with an absolute implied constant. Since $\varphi(ab)\geq \varphi(a)\varphi(b)$ and $\frac{k}{\varphi(k)}\leq 2^{\ell}$ when $k$ has $\ell$ prime factors, we have
\begin{align*}
&\sum_{1\leq h\leq \frac{Y}{T}}\sum_{n\sim Y}\frac{|A_n||A_{n+h}|}{n(n+h)}\\
&\leq  \frac{1}{Y^2}\cdot (\ell+r)!\sum_{1\leq h\leq \frac{Y}{T}}\sum_{p_1,...,p_{\ell}\sim P}\sum_{\substack{(m,\Pi)=1\\(p_1\dotsm p_{\ell}m+h,\Pi)=1\\m\leq \frac{2Y}{p_1\dotsm p_{\ell}}}}1\\
&\ll \frac{1}{Y^2}\cdot (\ell!)^{1+o(1)}\sum_{1\leq h\leq \frac{Y}{T}}\sum_{p_1,...,p_{\ell}\sim P}\frac{Y}{p_1\dotsm p_{\ell}\log^2 \frac{Y}{p_1\dotsm p_{\ell}}}\frac{p_1\dotsm p_{\ell}h}{\varphi(p_1\dotsm p_{\ell}h)}\\
&\ll \frac{1}{Y\log^2 Y}(\ell!)^{1+o(1)}\sum_{1\leq h\leq \frac{Y}{T}}\frac{h}{\varphi(h)}\sum_{p_1,...,p_{\ell}\sim P}\frac{1}{p_1\dotsm p_{\ell}}\\
&\ll \frac{1}{T}(\ell!)^{1+o(1)}\frac{1}{\log^2 X},
\end{align*}
as desired.\end{proof}

\begin{proposition}\label{p3} Let  $X^{1+o(1)}\geq T\geq T_0=X^{0.01}$ and $0\leq \alpha_1\leq 1$. Furthermore, let
\begin{align*}
P(s)=\sum_{p\sim P}\frac{a_p}{p^s},\quad \text{and}\quad M(s)=\sum_{M\leq q\leq M'}\frac{1}{q^s},
\end{align*}
with  $|a_p|\leq 1$, $M'\in [M+\frac{M}{\log P},2M]$, $\log X\leq P\ll X^{o(1)}$ and $PM=X^{1+o(1)}$, and 
let 
\begin{align*}
\mathcal{U}=\{t\in [T_0,T]:|P(1+it)|\geq P^{-\alpha_1}\}.
\end{align*}
Then, for $\ell=\lfloor \varepsilon\frac{\log X}{\log P}\rfloor$, 
\begin{align*}
&\int_{\mathcal{U}}|P(1+it)M(1+it)|^2dt\\
&\ll (P^{2\alpha_1-1}\log^2 X)^{(1+o(1))\ell}X^{o(1)}+(\log X)^{-100}\left(1+\frac{|\mathcal{U}'|T^{\frac{1}{2}}}{X^{\frac{2}{3}-o(1)}}\right)
\end{align*}
for some well-spaced set $\mathcal{U}'\subset \mathcal{U}$.
\end{proposition}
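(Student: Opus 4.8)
The plan is to dispose of the long zeta sum $M(s)$ by raising $P(s)$ to a power, exactly as in Proposition \ref{p2}, but now keeping the zeta sum intact so that we can later feed it to Watt's theorem (Lemma \ref{16}) or, alternatively, bound it by a large-values argument using Jutila's theorem (Lemma \ref{13}). Concretely, since $|P(1+it)|\geq P^{-\alpha_1}$ on $\mathcal{U}$, for the chosen $\ell=\lfloor \varepsilon\frac{\log X}{\log P}\rfloor$ we have $1\leq (P^{\alpha_1}|P(1+it)|)^{2\ell}$ there, so
\begin{align*}
\int_{\mathcal{U}}|P(1+it)M(1+it)|^2\,dt\ll P^{2\alpha_1\ell}\int_{\mathcal{U}}|P(1+it)^{\ell}M(1+it)|^2\,dt.
\end{align*}
The polynomial $P(s)^{\ell}$ has length $\asymp P^{\ell}=X^{\varepsilon+o(1)}$ and coefficients bounded by $\ell!\,d_{\ell}(n)\ll n^{o(1)}$, while $M(s)$ has length $M=X^{1-\varepsilon+o(1)}$; their product has length $X^{1+o(1)}$. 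The idea is then to split $P(s)^{\ell}$ into short intervals via Lemma \ref{6} (factorization), so that the main term becomes $|I|^2\int_{\mathcal S}|A_{v_0,H}(1+it)M(1+it)|^2dt$ for some short piece $A_{v_0,H}$ of $P(s)^{\ell}$ together with the zeta sum, plus diagonal error terms of size $\ll T\sum|c_n|^2+T\sum_h\sum_{m-n=h}|c_m||c_n|$ which, by the coprimality/divisor structure and Brun's sieve (as in the proof of Proposition \ref{p2}), contribute $\ll X^{o(1)}\cdot(P^{2\alpha_1-1})^{(1+o(1))\ell}$ after accounting for the $P^{2\alpha_1\ell}$ prefactor, giving the first term in the claimed bound.

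For the remaining integral $\int_{\mathcal S}|A(1+it)M(1+it)|^2dt$, where now $A$ is short (length $X^{o(1)\cdot}$, say $\leq X^{\varepsilon}$) and $M$ is a zeta sum of length $M=X^{1-\varepsilon+o(1)}$, I would pass to a well-spaced subset $\mathcal{U}'\subset\mathcal U$ of points realizing the supremum on unit intervals (losing only $X^{o(1)}$), and then bound $|A(1+it)|$ by a pointwise estimate. Here the key point is that after the factorization the short piece still has prime-supported structure, so Lemma \ref{17} (from Vinogradov's zero-free region) gives $|A(1+it)|\ll \exp(-(\log X)^{1/10})\ll (\log X)^{-C}$ for any $C$, on the relevant range $\exp((\log X)^{1/3})\leq t\leq X^{A\log\log X}$, and $T_0=X^{0.01}$ already exceeds $\exp((\log X)^{1/3})$. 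Thus on $\mathcal{U}'$,
\begin{align*}
\sum_{t\in\mathcal{U}'}|A(1+it)M(1+it)|^2\ll (\log X)^{-200}\sum_{t\in\mathcal{U}'}|M(1+it)|^2,
\end{align*}
and one bounds $\sum_{t\in\mathcal U'}|M(1+it)|^2$ by the Halász–Montgomery inequality (Lemma \ref{18}): since $M=X^{1-\varepsilon+o(1)}$ and $\sum_{m\sim M}|m^{-1}|^2\ll M^{-1}$, this is $\ll (M+|\mathcal U'|T^{1/2})(\log T)M^{-1}\ll X^{o(1)}(1+|\mathcal U'|T^{1/2}X^{-1+\varepsilon})$. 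Converting $\int_{\mathcal U'}$ to $\sum_{t\in\mathcal U'}$ costs only a factor $(\log X)^{O(1)}$ via a standard mean-value-of-$|F(1+it)|^2$-on-unit-intervals argument, and absorbing the $(\log X)^{-200}$ against all accumulated $X^{o(1)}$ and $(\log X)^{O(1)}$ losses leaves $(\log X)^{-100}(1+|\mathcal U'|T^{1/2}X^{-2/3+o(1)})$ — here I have used $X^{1-\varepsilon}\geq X^{2/3}$ for small $\varepsilon$, which is why $X^{2/3}$ (rather than $X^{1-\varepsilon}$) appears in the statement, giving room to spare. Combining the two contributions yields the proposition.

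The main obstacle is bookkeeping the $X^{o(1)}$ and $(\log X)^{O(1)}$ factors so that the final clean exponents $(1+o(1))\ell$ and $(\log X)^{-100}$ come out as stated: one must check that the factorization loss $|I|^2\ll H^2\log^2(P^{\ell})$ and the $\ell^2$, $(\ell!)^{o(1)}$, $d_r$-type losses are all $X^{o(1)}$ (true since $\ell\ll \log X/\log P$ and $P\geq \log X$, so $\ell\ll\log X/\log\log X$ and $\ell!\ll X^{o(1)}$), and that on $\mathcal{U}$ one genuinely has the pointwise lower bound on $|P(1+it)|$ available to perform the power-raising. A secondary subtlety is ensuring that after raising to the $\ell$-th power and factorizing, the resulting short polynomial $A$ is still covered by Lemma \ref{17} — this is fine because $P(s)^{\ell}$ has coefficients supported on $\ell$-fold products of primes $\sim P$, so each short piece is a (non-negative-coefficient) combination to which the pointwise bound applies after at most a harmless divisor-bounded rescaling; alternatively, one could avoid Lemma \ref{17} here and instead bound $\int_{\mathcal U'}|A M|^2$ directly by Lemma \ref{16} (Watt) after noting $A$ has length $\ll T^{1+o(1)}$, which also produces an admissible bound, but the Vinogradov route is cleaner since it wins arbitrarily many logarithms.
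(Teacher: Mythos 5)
Your proposal does not work as written, and the failure is structural rather than a matter of bookkeeping. First, $M(s)=\sum_{M\leq q\leq M'}q^{-s}$ is \emph{not} a zeta sum: by the paper's conventions $q$ denotes a prime, so $M(s)$ is a prime-supported polynomial of length $X^{1-o(1)}$. Watt's theorem (Lemma \ref{16}) requires its long factor to be a zeta sum, and Halász--Montgomery applied directly to $M(s)$ with $\sum|a_q|^2\asymp (M\log M)^{-1}$ only yields $\ll 1+|\mathcal{U}'|T^{1/2}/X^{1-o(1)}$ with no power of $\log X$ saved. The missing idea is Heath-Brown's decomposition (Lemma \ref{11}) with $k=3$: it replaces $M(s)$ by either a product $N_1(s)N_2(s)$ of genuine zeta sums with $N_1\geq X^{1/2-o(1)}$, to which one applies Cauchy--Schwarz and Watt's theorem against $|P(1+it)|^{4\ell}$ (this, not the diagonal terms of Lemma \ref{6}, is what produces the first term $(P^{2\alpha_1-1}\log^2X)^{(1+o(1))\ell}X^{o(1)}$, via $T_1/(N_1^2P^{2\ell})\ll X^{o(1)}P^{-2\ell}$ and $(2\ell)!\ll(\log^2X)^{(1+o(1))\ell}$), or a product $M_1M_2M_3$ of prime-factored polynomials with $\exp(\frac{\log X}{2\log\log X})\leq M_3\leq X^{1/3+o(1)}$. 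In the latter case one bounds $M_3$ pointwise by its prime-factored property and applies Halász--Montgomery to $P(s)M_1(s)M_2(s)$, whose length is $\gg X/M_3\gg X^{2/3-o(1)}$; that is the true origin of the exponent $\tfrac23$ in the statement, not slack from $X^{1-\varepsilon}\geq X^{2/3}$.

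Second, the step where you bound the short piece $A$ of $P(s)^{\ell}$ pointwise via Lemma \ref{17} is invalid. That lemma requires a \emph{fixed} number $k$ of factors, and its proof extracts the saving from one factor of length $\geq N^{1/k}$; here the number of factors is $\ell=\lfloor\varepsilon\log X/\log P\rfloor$, which tends to infinity, and each factor has length only $P\ll X^{o(1)}$ (possibly as small as $\log X$), for which Vinogradov's zero-free region gives a saving of $\exp(-c\log P/(\log t)^{2/3+\varepsilon})=1+o(1)$ at heights $t\asymp X$. Short prime sums of length $X^{o(1)}$ admit no nontrivial pointwise bound up to height $X$ --- that is precisely why the paper controls them through the hypothesis $|P(1+it)|\geq P^{-\alpha_1}$ on $\mathcal{U}$ together with the large values estimate of Lemma \ref{7} (used later, in Proposition \ref{p4}, to bound $|\mathcal{T}'|$), rather than through any zero-free region. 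Without Heath-Brown's decomposition there is no polynomial in your argument to which either Watt's theorem or a legitimate pointwise $\log$-power saving applies, so neither term of the claimed bound is reached.
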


\begin{proof} Heath-Brown's decomposition (Lemma \ref{11}) with $k=3$ allows us to write, for some $C>0$,
\begin{align*}
|M(1+it)|\ll (\log^{C} X)(|G_1(1+it)|+\dotsm +|G_L(1+it)|)
\end{align*}
with $L\leq \log^C X$. Here each $G_j(s)$ is either of the form
\begin{align*}
G_j(s)=M_1(s)M_2(s)M_3(s),\,\, M_1M_2M_3=X^{1+o(1)},\,\, M_1\geq M_2\geq M_3,\,\, M_3\geq \exp\left(\frac{\log X}{2\log \log X}\right)
\end{align*}
with $M_i(s)$ prime-factored polynomials, or of the form
\begin{align*}
G_j(s)=N_1(s)N_2(s),\,\, N_1N_2=X^{1+o(1)},\,\, N_1\geq N_2
\end{align*}
with $N_i(s)$ zeta sums (it is possible that $N_2(s)$ is the constant polynomial $1^{-s}$). It suffices to bound the contributions of the zeta sums and the prime-factored polynomials separately.\\

We look at the zeta sums first. We split the integration domain into dyadic intervals $[T_1,2T_1]$ with $T_0\leq T_1\leq T$. Keeping in mind that $N_1\geq X^{\frac{1}{2}-o(1)}$, $P^{\ell}=X^{\varepsilon+o(1)},$ and $|P(1+it)P^{\alpha_1}|^{2\ell}\geq 1$ for $t\in \mathcal{U}$, Cauchy-Schwarz and Watt's theorem (Lemma \ref{16}) yield
\begin{align*}
&\int_{\mathcal{U}\cap [T_1,2T_1]}|P(1+it)N_1(1+it)N_2(1+it)|^2dt\\
&\ll P^{2\alpha_1\ell}\int_{\mathcal{U}\cap[T_1,2T_1]}|N_1(1+it)N_2(1+it)P(1+it)^{\ell}|^2dt\\
&\ll P^{2\alpha_1\ell}\left(\int_{T_1}^{2T}|N_1(1+it)|^4|P(1+it)|^{4\ell}dt\right)^{\frac{1}{2}}\cdot\left(\int_{T_1}^{2T_1}|N_2(1+it)|^4 dt\right)^{\frac{1}{2}}\\
&\ll P^{2\alpha_1 \ell}X^{o(1)}\left(\left(\frac{T_1+T_1^{\frac{1}{2}}P^{4\ell}}{N_1^2P^{2\ell}}+\frac{1}{T_1^3}\right)(2\ell)!^2\right)^{\frac{1}{2}}\cdot \left(\frac{T_1+N_2^2}{N_2^2}\right)^{\frac{1}{2}}\\
&\ll  P^{(2\alpha_1-1) \ell}X^{o(1)}\cdot (\ell!)^{2+o(1)}+\frac{P^{2\alpha_1\ell}X^{o(1)}(\ell!)^{2+o(1)}}{T_0}\\
&\ll (P^{2\alpha_1-1}\log^2 X)^{(1+o(1))\ell}X^{o(1)}+X^{-\varepsilon}.
\end{align*}
Combining the contributions of the dyadic intervals simply multiplies this bound by $\log X$.\\

To bound the contribution of the prime-factored polynomials, we first observe that
\begin{align*}
\int_{\mathcal{U}}|P(1+it)M(1+it)|^2 dt\ll \sum_{t\in \mathcal{U}'}|P(1+it)M(1+it)|^2
\end{align*}
for some well-spaced $\mathcal{U}'\subset \mathcal{U}$. We make use of the Halász-Montgomery inequality (Lemma \ref{18}), and of the prime-factored property applied to the polynomial $M_3$ with length $M_3\in \left[\exp\left(\frac{\log X}{2\log \log X}\right), X^{\frac{1}{3}+o(1)}\right]$, finding that
\begin{align*}
 &\sum_{t\in\mathcal{U}'}|P(1+it)M_1(1+it)M_2(1+it)M_3(1+it)|^2\\
 &\ll (\log X)^{-100-D}\sum_{t\in \mathcal{U}'}|P(1+it)M_1(1+it)M_2(1+it)|^2\\
 &\ll (\log X)^{-100-2C}\left(1+\frac{T^{\frac{1}{2}}|\mathcal{U}'|}{X^{\frac{2}{3}-o(1)}}\right),
\end{align*}
where $D$ is so large that $D-2C-1$ exceeds the power of logarithm arising from the mean square of the coefficients of the divisor-bounded polynomial $P(s)M_1(s)M_2(s)$. Now the statement is proved.\end{proof}

\section{Proof of Theorem 4}

The following proposition yields Theorem \ref{t4} (and hence Theorems \ref{t1} and \ref{t2}) immediately, in view of the remarks of Subsection \ref{subsec:reduction}

\begin{proposition}\label{p4} Let $k\geq 3$ be a fixed integer, $\varepsilon>0$ be small enough and $T_0=X^{0.01}$, as before. Define 
\begin{align*}
F(s)=\sum_{\substack{p_1\dotsm p_k\sim X\\P_i\leq p_i\leq P_i^{1+\varepsilon}\\
i\leq k-1}}(p_1\dotsm p_k)^{-s},
\end{align*}
where $P_i$ are as in Theorem \ref{t4}. Then, for $T\geq T_0$, we have
\begin{align}
\int_{T_0}^{T}|F(1+it)|^2dt\ll \left(\frac{TP_1\log X}{X}+1\right)\cdot \frac{1}{(\log^2 X)(\log_k X)^{3}}.
\end{align}
\end{proposition}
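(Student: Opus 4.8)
The plan is to reduce the estimate for $\int_{T_0}^{T}|F(1+it)|^2\,dt$ to the three propositions just proved, by peeling off the prime variables $p_1,\dots,p_{k-1}$ one at a time. Write $F(s)=P_1(s)\cdots P_{k-1}(s)K(s)$, where $P_i(s)=\sum_{P_i\le p\le P_i^{1+\varepsilon}}p^{-s}$ and $K(s)=\sum_{n\sim X/(p_1\cdots p_{k-1})}n^{-s}$ is supported on primes $p_k$ (so it is a zeta-type sum, or a sum over primes, of length $\gg X^{1-o(1)}$ since the $P_i$ are powers of iterated logarithms). The key point is a dichotomy for each factor $P_i(1+it)$: either it is small, $|P_i(1+it)|\le P_i^{-\alpha_i}$, in which case Proposition~\ref{p1} (applied with the remaining product playing the role of the long polynomial $K$) gives a power saving $P_i^{1-2\alpha_i}$; or it is large, $|P_i(1+it)|\ge P_i^{-\alpha_i}$, in which case we move to the next variable. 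After going through all $k-1$ variables, the set of $t$ on which every $P_i$ is ``large'' is handled by Proposition~\ref{p2} (with the shortest $P_i$, namely $P_{k-2}=(\log\log X)^{6+10\sqrt\varepsilon}$ or the even shorter $P_j$'s, in the role of the polynomial $M$ whose small values we exploit) together with the pointwise bound Lemma~\ref{17} or Proposition~\ref{p3} for the long zeta sum $K$.

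More concretely, I would set up a decomposition of $[T_0,T]$ into at most $2^{k-1}$ ranges according to the sign of each inequality $|P_i(1+it)|\gtrless P_i^{-\alpha_i}$ for suitable thresholds $\alpha_i\in(0,1]$ to be optimized. On a range where $P_{i_0}$ is the first small factor, bound $|P_1\cdots P_{i_0-1}|\le \prod_{j<i_0}P_j^{\varepsilon+o(1)}$ trivially (these are negligible since $\sum_j \log P_j \ll \varepsilon \log_{k-2} X$), apply Proposition~\ref{p1} to $K(s)P_{i_0}(s)$ with the coefficient sums $S_1,S_2$ of the remaining primes $p_{i_0+1},\dots,p_{k-1},p_k$ estimated by Lemma~\ref{15} (the set $\mathcal Q$ there is exactly a product of primes from dyadic-type ranges times a rough-number factor), giving $S_1\ll 1/\log X$ and $S_2\ll (X/T)/\log^2 X$, hence a bound $\ll (T/X+1)\cdot P_{i_0}^{1-2\alpha_i}/\log^2 X \cdot (\text{tiny})$. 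Choosing $\alpha_i$ bounded away from $1/2$ — which is possible because $P_i\to\infty$, though slowly — makes $P_{i_0}^{1-2\alpha_i}$ a genuine (if slowly growing) saving; one must check this saving beats the required $(\log_k X)^{3}$, which is where the precise calibration $P_j=(\log P_{j+1})^{\varepsilon^{-1}}$ in Theorem~\ref{t4} is used, since each $P_j$ is a large power of the next iterated logarithm. On the ``all large'' range, apply Proposition~\ref{p2} to $K(s)$ times, say, $M(s)=P_{k-2}(s)$ (the smallest factor, with $\ell=\lceil \log(X/K)/\log P\rceil$ controlled since $K\gg X^{1-o(1)}$, so $\ell$ is bounded), absorbing the other $P_i$ into $K$'s coefficient support; the factor $M^{-2\alpha_2}P^{(2+10\varepsilon)\alpha_1\ell}(\ell!)^{1+o(1)}$ must be arranged to be $\ll (\log_k X)^{-3}$, and again the tower structure of the $P_j$ is what makes this work.

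The main obstacle is the bookkeeping of savings across the iterated-logarithm scales: each application of Propositions~\ref{p1} and~\ref{p2} gains only a small power of the corresponding $P_i$, while we need to beat a fixed negative power of $\log_k X$, and we cannot afford to lose even $\log^\varepsilon X$ anywhere — this is exactly why the improved mean value theorem (Lemma~\ref{2}) and the sharp coefficient bounds of Lemma~\ref{15} are built into Propositions~\ref{p1}--\ref{p2}, and why the factorization lemma (Lemma~\ref{6}) is stated with a factor $T$ rather than $X$. A secondary technical point is the treatment of the long polynomial $K(s)$ over $p_k$: for $T$ in the range $T_0\le T\le X^{1/3}$ or so the mean value theorem suffices, but for larger $T$ up to $X$ one must invoke the pointwise bound of Lemma~\ref{17} (valid since $\exp((\log X)^{1/3})\le t\le X^{A\log\log X}$ on the relevant range) to get the power-of-log saving from the zeta sum, and one should also record that for $T\ge X$ the last term of Lemma~\ref{1} is already acceptable by the mean value theorem, as noted in Subsection~\ref{subsec:reduction}. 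Assembling the $O_k(1)$ pieces and noting each is $\ll (T P_1\log X/X + 1)(\log^2 X)^{-1}(\log_k X)^{-3}$ up to the trivially small prefactors then yields the claimed bound.
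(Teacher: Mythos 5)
Your overall scheme — an iterated dichotomy over the scales $P_1,\dots,P_{k-1}$, with Propositions~\ref{p1},~\ref{p2},~\ref{p3} applied to the resulting pieces of $[T_0,T]$, calibrated against the tower structure of the $P_j$'s and the factorization Lemma~\ref{6} — is the right skeleton, but the propositions are assigned to the wrong ranges, and this breaks the numerology. You propose to use Proposition~\ref{p1} on every range where some $P_{i_0}$ is the first small factor, bounding the earlier ``large'' factors trivially. This works for $i_0=1$, but fails for $i_0\ge 2$: after multiplying by the $H^2\log^2 P_{i_0}$ from the factorization lemma, the $T$-part of the Proposition~\ref{p1} bound is $\asymp \frac{T}{X\log X}P_{i_0}^{1-2\alpha_{i_0}}H^2\log^2 P_{i_0}$, and this must beat $\frac{TP_1}{X\log X(\log_k X)^3}$. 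Since $P_{i_0}\gg \exp(P_1^{\varepsilon})$ already for $i_0=2$, this forces $\alpha_{i_0}=\tfrac{1}{2}-O\!\left(\frac{\log P_1}{\log P_{i_0}}\right)=\tfrac12-o(1)$, i.e.\ essentially full square-root cancellation in $Q_{v,H}$. That is not an assumption one gets to make: the complementary set (where $|Q_{v,H}(1+it)|>P_{i_0}^{-\alpha_{i_0}}$) is then controlled by no large-values theorem — Lemma~\ref{7} and Jutila's theorem give nothing when $\alpha$ is that close to $1/2$. The paper instead keeps $\alpha_j=10j\varepsilon$ small for $j\le k-2$ and on $\mathcal{T}_j$ (with $j\ge 2$) applies Proposition~\ref{p2}, whose whole point is the bootstrap $|M(1+it)|^2\le M^{-2\alpha_2}\bigl(P^{\alpha_1}|P(1+it)|\bigr)^{2\ell}$: it \emph{uses} the largeness of the previous-scale polynomial $P=Q_{v_{j-1},H}$ rather than discarding it, and the cost $P_{j-1}^{O(\alpha_{j-1}\ell)}(\ell!)^{1+o(1)}$ is exactly what the calibration $P_j=(\log P_{j+1})^{\varepsilon^{-1}}$ is designed to absorb. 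Dropping the large factors trivially throws away the only available gain.

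The second structural error is symmetric: you propose to apply Proposition~\ref{p2} on the ``all large'' range with $M(s)=P_{k-2}(s)$, but Proposition~\ref{p2} requires $|M(1+it)|\le M^{-\alpha_2}$ on the set in question — there is no small factor there to play the role of $M$, so it simply does not apply. That range is the job of Proposition~\ref{p3}: separate the $p_{k-1}$ variable and the long $p_k$-sum by Perron, apply Heath-Brown's decomposition to the $p_k$-polynomial to split into either prime-factored pieces (handled via Hal\'asz--Montgomery together with Lemma~\ref{17}) or zeta sums (handled via Watt's theorem, Lemma~\ref{16}), and control the size of the exceptional set by Lemma~\ref{7} — which is precisely why the paper fixes $\alpha_{k-1}=\tfrac{1}{12}-\varepsilon$ and $P_{k-1}=(\log X)^{\varepsilon^{-2}}$. (As a minor point, the opening identity $F(s)=P_1(s)\cdots P_{k-1}(s)K(s)$ is false because of the joint constraint $p_1\cdots p_k\sim X$; Lemma~\ref{6} is the correct, nearly lossless replacement, and you do cite it later, so this is a wording slip rather than a gap.) With the role of Proposition~\ref{p2} moved from the ``all large'' range to the intermediate ranges $\mathcal{T}_2,\dots,\mathcal{T}_{k-1}$, and Proposition~\ref{p3} installed on $\mathcal{T}$, the rest of your bookkeeping (Lemma~\ref{15} for $S_1,S_2$, the $H=(\log_k X)^3$ choice, the $T\le X^{1+o(1)}$ reduction) is in line with the paper.
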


\begin{proof} We make use of the ideas introduced in the paper \cite{matomaki} by Matom\"aki and Radziwi\l{}\l{}. Trivially, we may assume $T\leq X^{1+o(1)}$. Let $H=(\log_k X)^{3}$,
\begin{align*}
Q_{v,H}(s)=\sum_{e^{\frac{v}{H}}\leq p<e^{\frac{v+1}{H}}}p^{-s},
\end{align*}
and for each $j=1,...,k$,
\begin{align*}
F_{v,H,j}(s)=\sum_{\substack{p_1\dotsm p_{j-1}p_{j+1}\dotsm p_k\sim Xe^{-\frac{v}{H}}\\P_i\leq p_i\leq P_i^{1+\varepsilon},\,i\neq j,\,i\leq k-1}}(p_1\dotsm p_{j-1}p_{j+1}\dotsm p_k)^{-s}.
\end{align*}
Define $\alpha_1,...,\alpha_{k-1}$ by $\alpha_{j}=10j\varepsilon$ for $j\leq k-2$, and $\alpha_{k-1}=\frac{1}{12}-\varepsilon$, with $\varepsilon$ so small that $\alpha_{k-2}\leq \frac{\sqrt{\varepsilon}}{10}$. We split the domain of integration as $[T_0,T]=\mathcal{T}_1\cup \mathcal{T}_2\cup\dotsm \cup \mathcal{T}_{k-1} \cup \mathcal{T}$. We write $t\in \mathcal{T}_1$ if
\begin{align*}
|Q_{v,H}(1+it)|\leq e^{-\frac{\alpha_1 v}{H}}
\end{align*}
for all $v\in I_1=[H\log P_1,(1+\varepsilon)H\log P_1]$. We define recursively $t\in \mathcal{T}_j$ for $j=2,...,k-1$ if $t\not \in \bigcup_{j'\leq j-1} \mathcal{T}_{j'}$ but
\begin{align*}
|Q_{v,H}(1+it)|\leq e^{-\frac{\alpha_j v}{H}}
\end{align*} 
for all $v\in  I_j=[H\log P_j,(1+\varepsilon)H\log P_j]$. Finally, we write
\begin{align*}
\mathcal{T}=[T_0,T]\setminus \bigcup_{j=1}^{k-1}\mathcal{T}_j.
\end{align*}
Lemma \ref{6}, with the notation of Subsection \ref{subsec:sieve}, yields
\begin{align}\label{eq35}
\int_{\mathcal{S}}|F(1+it)|^2dt&\ll H^2(\log^2 P_j)\int_{\mathcal{S}}|Q_{v_j,H}(1+it)F_{v_j,H,j}(1+it)|^2dt\nonumber\\
&+\frac{T}{HX}(S_1(X,(c_n))+S_2(X,(c_n)))
\end{align}
for some $v_j\in I_j$, and any $\mathcal{S}\subset [T_0,T]$. The coefficients $c_n$ in the definitions of $S_1$ and $S_2$ are naturally the convolution of the absolute values of the coefficients of the polynomials $Q_{v_j,H}(s)$ and $F_{v_j,H,j}(s).$ By Lemma \ref{15} and the remark related to it, the last two terms above contribute 
\begin{align*}
&\ll \frac{T}{X}\cdot \frac{1}{H\log X}+\frac{1}{H\log^2 X}\\
&\ll \left(\frac{TP_1\log X}{X}+1\right)\cdot \frac{1}{H\log^2 X}.
\end{align*}

We choose $\mathcal{S}=\mathcal{T}_1,...,\mathcal{T}_{k-1},\mathcal{T}$ in \eqref{eq35}. Summarizing, it suffices to estimate for each $j=1,...,k-1$ the quantity
\begin{align*}
B_j:=H^2(\log^2 P_j)\int_{\mathcal{T}_j}|Q_{v_j,H}(1+it)F_{v_j,H,j}(1+it)|^2dt,
\end{align*}
where $v_j\in [H\log P_j,(1+\varepsilon)H\log P_j]$ is chosen so that the integral is maximal, and additionally the quantity
\begin{align*}
B:=H^2(\log^2 X)\int_{\mathcal{T}}|Q_{v_k,H}(1+it)F_{v_k,H,k}(1+it)|^2 dt,
\end{align*}
where $v_k\in [H\log \frac{X}{(P_1\dotsm P_{k-1})^{1+\varepsilon}},H\log \frac{2X}{P_1\dotsm P_{k-1}}]$ is also picked so that the integral is maximized.\\

The integral over $\mathcal{T}_1$ is bounded with the help of Proposition \ref{p1}. We take $K(s)=F_{v_1,H,1}(s)$ and $P(s)=Q_{v_1,H}(s)$. Now Lemma \ref{15} and Proposition \ref{p1} result in
\begin{align*}
B_1&\ll H^2(\log^2 P_1) P_1^{1+\varepsilon-2\alpha_1}\frac{T}{X}\left(\frac{1}{\log X}+\frac{X}{P_1T}\cdot \frac{1}{\log^2 X}\right)\\
&\ll\left(\frac{TP_1\log X}{X}+1\right)\cdot \frac{P_1^{10\varepsilon-2\alpha_1}}{\log^2 X},
\end{align*}
and this is an admissible bound, since $\alpha_1=10\varepsilon$ and $P_1\gg(\log_{k} X)^{\varepsilon^{-1}}$.\\

For the integral over $\mathcal{T}_j$ with $2\leq j\leq k-1$ we use Proposition \ref{p2}, with $K(s)=F_{v_j,H,j}(s), M(s)=Q_{v_j,H}(s)$ and $P(s)=Q_{v_{j-1},H}(s)$, and for $\ell=\lceil \frac{\log P_j}{\log P_{j-1}}\rceil$ deduce
\begin{align}\label{eq44}
B_{j}&\ll H^2(\log^2 P_{j})P_{j}^{-2\alpha_{j}}\cdot P_{j-1}^{(2+10\varepsilon) \alpha_{j-1} \ell}\nonumber\\ 
&\quad\cdot (\ell!)^{1+o(1)}\cdot \left(\frac{T}{X\log X}+\frac{1}{\log^2 X}\right)\nonumber\\
&\ll P_{j-1}^{10}P_{j}^{2(\alpha_{j-1}-\alpha_{j})+10\varepsilon+(1+\varepsilon)\frac{\log \log P_{j}}{\log P_{j-1}}}\left(\frac{TP_1\log X}{X}+1\right)\frac{1}{\log^2 X}.
\end{align}
For $2\leq j\leq k-2$, we have $\frac{\log \log P_j}{\log P_{j-1}}\leq 2\varepsilon$ and $\alpha_j-\alpha_{j-1}=10\varepsilon$, so the definitions of $P_{j-1}$ and $P_j$ result in
\begin{align*}
B_j\ll \left(\frac{TP_1\log X}{X}+1\right)\frac{1}{\log^2 X}(\log_k X)^{-3},
\end{align*}
as wanted. For $j=k-1$, we have $\alpha_{k-2}\leq \frac{\sqrt{\varepsilon}}{10}$, $\alpha_{k-1}=\frac{1}{12}-\varepsilon$ and $P_{k-1}=(\log X)^{\varepsilon^{-2}}$, so taking $j=k-1$ in the above computation gives
\begin{align*}
B_{k-1}&\ll P_{k-1}^{-\frac{1}{6}+\frac{1}{4}\sqrt{\varepsilon}+\frac{1+\varepsilon}{6+10\sqrt{\varepsilon}}}\ll P_{k-1}^{-\varepsilon}\ll (\log X)^{-\varepsilon^{-1}},
\end{align*}
and therefore the case of $\mathcal{T}_{k-1}$ has been dealt with.\\

Finally, the integral over $\mathcal{T}$ is estimated using Proposition \ref{p3} with $P(s)=Q_{v_{k-1},H}(s)$ and $M(s)=Q_{v_k,H}(s)$. Denoting $\ell=\lfloor \varepsilon\frac{\log X}{\log P_{k-1}}\rfloor$ and separating by Perron's formula the variable $p_{k-1}$ from the rest of the variables in $F_{v_k,H,k}(s)$ (and bounding the polynomial corresponding to the variables $p_1,...,p_{k-2}$ by $\ll 1$), we see that
\begin{align*}
B&\ll H^2(\log^4 X)\int_{\mathcal{T}}|Q_{v_{k-1},H}(1+it)Q_{v_k,H}(1+it)|^2dt\\
&\ll H^2(\log^4 X)(P_{k-1}^{-\frac{5}{6}+2\varepsilon}\log^2 X)^{(1+o(1))\ell}X^{o(1)}+(\log X)^{-95}\left(1+\frac{|\mathcal{T'}|T^{\frac{1}{2}}}{X^{\frac{2}{3}-o(1)}}\right)
\end{align*}
for some well-spaced set $\mathcal{T}'\subset \mathcal{T}$. Since $P_{k-1}=(\log X)^{\varepsilon^{-2}}$, the first term is $\ll X^{-\frac{\varepsilon}{3}}$. In addition, Lemma \ref{7} allows us to bound the size of $\mathcal{T}'$ by
\begin{align*}
|\mathcal{T}'|\ll T^{2\alpha_{k-1}}P_{k-1}^2 X^{(\varepsilon^2+o(1))}\ll X^{\frac{1}{6}-\frac{\varepsilon}{2}}, 
\end{align*}
because $\alpha_{k-1}=\frac{1}{12}-\varepsilon$. Therefore, the integral over $\mathcal{T}$ is $\ll (\log X)^{-95}$. In conclusion, we deduced the bound 
\begin{align*}
B_1+\dotsm +B_{k-1}+B\ll \left(\frac{TP_1\log X}{X}+1\right)\cdot \frac{1}{H\log^2 X},
\end{align*}
which finishes the proof of this proposition and of Theorem \ref{t4}.\end{proof}

\subsection{A corollary on products of two primes}

As a byproduct of the methods above, we arrive at the exponent $c=5+\varepsilon$ for products of two primes, which already replicates Mikawa's exponent for $P_2$ numbers\footnote{Adding to the argument a small refinement from Subsection \ref{subsec:Exp}, as well as Proposition \ref{p8}, which is rather similar to Proposition \ref{p3}, would already give $c$ somewhat smaller than $5$.} . Similarly as for products of three or more primes, it suffices to prove 
\begin{align*}
\int_{T_0}^{T}|F(1+it)|^2dt=o\left(\left(\frac{TP_1\log X}{X}+1\right)\cdot \frac{1}{(\log X)^{2+\varepsilon}}\right),
\end{align*}
where
\begin{align*}
F(s)=\sum_{\substack{p_1p_2\sim X\\P_1\leq p_1<P_1^{1+\varepsilon}}} (p_1p_2)^{-s},
\end{align*}
and $P_1=\log^a X$ with $a=4+\varepsilon.$ We may again suppose $T\leq X^{1+o(1)}$.\\ 

We can redefine the set $\mathcal{T}_1$ in the proof of Proposition \ref{p4} with the new values $P_1=\log^{a} X$, $H=(\log X)^{3\varepsilon}$, keeping the value $\alpha_1=10\varepsilon$, and we see again from Proposition \ref{p1} that the mean square of $F(1+it)$ over $\mathcal{T}_1$ is suitably small. For applying Propositions \ref{p2} and \ref{p3}, we need more polynomials than the two that correspond to the variables $p_1$ and $p_2$ in \eqref{eq25}. Indeed, Heath-Brown's decomposition (Lemma \ref{11}) enables splitting the polynomial corresponding to $p_2$ as $(\log X)^{O(1)}$ sums of the form $|M_1(s)M_2(s)|+|N_1(s)N_2(s)|$, where $M_1(s)$ and $M_2(s)$ are prime-factored Dirichlet polynomials with $M_1M_2=X^{1+o(1)}$, $\exp\left(\frac{\log X}{2\log \log X}\right)\ll M_1 \ll X^{\frac{1}{3}+o(1)}$ and $N_1(s)$ and $N_2(s)$ zeta sums with $N_1N_2=X^{1+o(1)}$. The contribution of the zeta sums over the complement of $\mathcal{T}_1$ can be managed easily with Watt's theorem, similarly as in the proof of Proposition \ref{p3}.\\

To estimate the contribution of the prime-factored polynomials $M_i(s)$, we redefine the set $\mathcal{T}_2$ as  $\{t\in [T_0,T]: |M_1(1+it)|\leq M_1^{-\alpha_2}\}\setminus \mathcal{T}_1$, and Proposition \ref{p2} (with $P(s)$ corresponding to $p_1$ and $K(s)=M_1(s)M_2(s)$) produces a valid bound\footnote{This bound for $a$ arises by inserting $P_{j-1}=\log^a X$ and $P_j=X^{1+o(1)}$ into formula \eqref{eq44}.}\,\, in the $\mathcal{T}_2$ case, as long as $a\geq \frac{1}{2(\alpha_2-\alpha_1)}+100\varepsilon$. We take $\alpha_2=\frac{1}{8}-\varepsilon$, which turns out to be the best choice here.\\

Finally, when considering the integral over the complement of $\mathcal{T}_1\cup \mathcal{T}_2$, instead of Proposition \ref{p3} we apply the simple inequality
\begin{align*}
\int_{\mathcal{T}}|M_1(1+it)M_2(1+it)|^2dt\ll (\log X)^{-100}\left(1+\frac{|\mathcal{T}'|T^{\frac{1}{2}}}{M_2}\right)
\end{align*}
for some well-spaced $\mathcal{T}'\subset \mathcal{T}$, with $\mathcal{T}\subset [T_0,T]$ arbitrary. This inequality follows just from the prime-factored property of $M_1(s)$ combined with the Halász-Montgomery inequality (Lemma \ref{18}). Now, denoting $M_1=X^{\nu+o(1)}$, we need to have $|\mathcal{T}'|\ll X^{\frac{1}{2}-\nu-\varepsilon^2}$ whenever
\begin{align*}
\mathcal{T}'\subset \{t\in [T_0,T]: |M_1(1+it)|\geq M_1^{-\alpha_2}\}
\end{align*}
is well spaced. Jutila's large values theorem (Lemma \ref{13}) applied with $F(s)=M_1(s)^{\ell}$, $V=M_1^{-(\frac{1}{8}-\varepsilon)\ell}$ and $k=2$, $\ell\in \{2,3\}$ tells that
\begin{align*}
|\mathcal{T'}|\ll \begin{cases}X^{\max\{\frac{\nu}{2},\,\,-\frac{11}{4}\nu+1,\,\,1-4\nu\}-2\varepsilon^{2}} \\
X^{\max\{\frac{3}{4}\nu,\,\,-\frac{33}{8}\nu+1,\,\,1-6\nu\}-2\varepsilon^{2}}.\end{cases}
\end{align*}
We know that $\nu\leq \frac{1}{3}+o(1)$, and for $\frac{2}{7}\leq \nu\leq \frac{1}{3}$ the first bound is $\ll X^{\frac{1}{2}-\nu-\varepsilon^2}$, while for $\frac{4}{25}\leq \nu\leq \frac{2}{7}$ the second bound is small enough.\\

In the case $\nu\leq \frac{4}{25}$, we may simply appeal to Lemma \ref{7} to bound $|\mathcal{T}'|$ (with $V=M_1^{-\alpha_2}$), and get
\begin{align*}
|\mathcal{T}'|\ll T^{2\alpha_2}X^{2\nu \alpha_2+o(1)}\ll X^{0.29+100\varepsilon}\ll X^{\frac{1}{2}-\nu-\varepsilon}
\end{align*}
for $\alpha_2=\frac{1}{8}-\varepsilon$. This proves that $\alpha_2=\frac{1}{8}-\varepsilon$ was permissible, leading to $a=\frac{1}{2\alpha_2}+C_1\varepsilon$, so the admissible exponent becomes $c=a+1\leq 5+2C_1\varepsilon$ (and $\varepsilon>0$ was arbitrary). The rest of the paper therefore deals with improving the value $c=5+\varepsilon$ to $c=3.51$, which will require several further ideas, along with the ones already introduced.

\section{Lemmas for Theorem 5}

\subsection{Exponent pairs}\label{subsec:Exp}

In the proof of Theorem \ref{t5}, several zeta sums arise, and in some instances it is useful to have a smallish, pointwise power saving in these sums. This is given by the theory of exponent pairs. We could compute a long list of exponent pairs and choose the optimal estimate depending on the length of the zeta sum, but it turns out that using a single suitable exponent pair improves the exponent $c$ for $E_2$ numbers by approximately $0.02$, while having more of them would have very little additional advantage, and would complicate the calculations. Therefore, instead of formulating the general definition of exponent pairs (found in \cite[Chapter 3]{montgomery}), we write down the estimate coming from this specific pair.

\begin{lemma} Let
\begin{align*}
\sigma(\nu)=-\min\left\{\frac{1-\nu}{126}-\frac{\nu}{21},0\right\}.
\end{align*}
Then we have
\begin{align*}
\sum_{n\in I}n^{-1-it}\ll t^{-\sigma(\nu)+o(1)}
\end{align*}
for each $I=[N_1,N_2]$ with $t^{\nu}\leq N_1\leq N_2\ll t^{\nu+o(1)}$. 
\end{lemma}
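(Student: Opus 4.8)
The plan is to derive this estimate from the classical theory of exponent pairs by verifying that $\left(\tfrac{1}{126},\tfrac{20}{21}\right)$ is an exponent pair and then converting the resulting bound for exponential sums into a bound for the Dirichlet-type sum $\sum_{n\in I}n^{-1-it}$ via partial summation.

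First I would recall that, by the standard processes $A$ and $B$ for generating exponent pairs (see \cite[Chapter 3]{montgomery}), starting from the trivial pair $(0,1)$ and applying a suitable finite word in $A$ and $B$ one obtains the exponent pair $(k,\ell)=\left(\tfrac{1}{126},\tfrac{20}{21}\right)$; one checks $k+\ell=\tfrac{1}{126}+\tfrac{20}{21}=\tfrac{1}{126}+\tfrac{120}{126}=\tfrac{121}{126}<1$, and $0\le k\le \tfrac12\le \ell\le 1$, so this is a legitimate exponent pair. By definition this means that for any $N_1\le N_2\le 2N_1$ and any $t$ with $N_1\asymp t^{\nu}$,
\begin{align*}
\sum_{N_1\le n\le N_2} n^{-it}\ll \left(\frac{t}{N_1}\right)^{k} N_1^{\ell}\,t^{o(1)} = t^{k\nu^{-1}\cdot\text{(scaling)}}\cdots,
\end{align*}
more precisely $\sum_{n\sim N_1} n^{-it}\ll t^{o(1)}\bigl(t^{k}N_1^{\ell-k}\bigr)$, and substituting $N_1\asymp t^{\nu}$ gives an exponent $k+\nu(\ell-k) = \tfrac{1}{126}+\nu\bigl(\tfrac{20}{21}-\tfrac{1}{126}\bigr)=\tfrac{1}{126}+\nu\cdot\tfrac{119}{126}$. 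Since we want a \emph{saving} over the trivial bound $N_1^{o(1)}$, the relevant exponent is $k+\nu(\ell-k)-\nu = k-\nu(1-\ell+k)$; with $k=\tfrac{1}{126}$ and $1-\ell = \tfrac{1}{21}$ this becomes $\tfrac{1}{126}-\nu\bigl(\tfrac{1}{21}+\tfrac{1}{126}\bigr)=\tfrac{1-\nu}{126}-\tfrac{\nu}{21}$, matching the definition of $-\sigma(\nu)$ exactly (when this quantity is positive; otherwise the trivial bound gives $\sigma(\nu)=0$).

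Next I would remove the normalization factor $n^{-1}$ and extend from a dyadic block $[N_1,2N_1]$ to a general interval $[N_1,N_2]$ with $t^{\nu}\le N_1\le N_2\ll t^{\nu+o(1)}$. For the first step, partial summation against $n^{-1}$ on $[N_1,\min(N_2,2N_1)]$ costs only a factor $O(N_1^{-1}\cdot 1)$ times the sup of the partial sums of $n^{-it}$, yielding $\sum n^{-1-it}\ll N_1^{-1}\cdot N_1^{1+o(1)}t^{-\sigma(\nu)} = t^{-\sigma(\nu)+o(1)}$; the boundary terms are of the same size. For the second step, since $N_2\ll t^{\nu+o(1)}$ and $N_1\ge t^{\nu}$, the interval $[N_1,N_2]$ is covered by $O(\log t)=t^{o(1)}$ dyadic blocks, each still satisfying $N_1'\asymp t^{\nu'}$ with $\nu'\in[\nu,\nu+o(1)]$, and $\sigma$ is continuous, so summing the $t^{o(1)}$ contributions absorbs into the $t^{o(1)}$ error. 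This completes the argument.

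The main obstacle is purely bookkeeping: exhibiting the explicit word in the operators $A,B$ that produces the pair $\left(\tfrac{1}{126},\tfrac{20}{21}\right)$, and checking that the exponent arithmetic genuinely collapses to $\tfrac{1-\nu}{126}-\tfrac{\nu}{21}$. There is no analytic difficulty — everything follows from the black-box definition of an exponent pair together with trivial partial summation — so I would present the derivation of $\sigma(\nu)$ from $(k,\ell)$ in one or two lines and relegate the verification that $(k,\ell)$ is an exponent pair to a citation of \cite[Chapter 3]{montgomery}.
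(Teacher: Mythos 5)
Your proposal is correct and follows exactly the paper's route: the paper's proof is the one-line observation that $\left(\tfrac{1}{126},\tfrac{20}{21}\right)$ is an exponent pair (citing Montgomery, Chapter 3), and your exponent arithmetic $k(1-\nu)-\nu(1-\ell)=\tfrac{1-\nu}{126}-\tfrac{\nu}{21}$ together with the partial-summation and dyadic-decomposition steps just makes explicit what the paper leaves implicit.
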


\begin{proof} This follows immediately from the fact that $(\frac{1}{126},\frac{20}{21})$ is an exponent pair. For the proof of this, see Montgomery's book \cite[Chapter 3]{montgomery}.\end{proof}

\subsection{Lemmas on sieve weights}

For finding products of two primes on short intervals, we need some lemmas concerning sieve weights. In the cases of sums $\Sigma_1(h)$ and $\Sigma_2(h)$ in Subsection \ref{subsec:Sigma1}, there will be too few variables for finding cancellation in the mean square of the corresponding Dirichlet polynomials. However, introducing sieve weights to these sums, we get an additional variable which is summed over all integers in a certain range, and separating that variable gives a long zeta sum (because there are few variables), and Watt's theorem can be applied to this sum. Also in the case of these sums, we need to make use of an additional saving of a logarithm in the mean value theorem. However, here the coefficients are not supported on almost primes but are closely related to the Dirichlet convolution $\lambda_n* 1$, where $\lambda_n$ are the sieve weights. The sieve weights $\lambda_n$ can be taken to be those of Brun's pure sieve. Specifically, we take
\begin{align*}
\lambda_d^{+}=\begin{cases}\mu(d),\quad \nu(d)\leq R,d\mid \mathcal{P}(w)\\ 0\quad\quad\quad \text{otherwise}\end{cases}\quad\quad \lambda_d^{-}=\begin{cases}\mu(d),\quad \nu(d)\leq R+1,d\mid \mathcal{P}(w)\\ 0\quad\quad\quad \text{otherwise}\end{cases}
\end{align*} 
where the notations are as in Subsection \ref{subsec:notation}, and 
\begin{align*}
 w=\exp\left(\frac{\log X}{(\log \log X)^3}\right)\quad \text{and}\quad R=2\left\lfloor (\log \log X)^{\frac{3}{2}}\right\rfloor.
\end{align*}
Since the support of $\lambda_n*1$ contains in addition to almost primes only numbers having exceptionally many prime factors, we are able to save one logarithm factor in the mean values. This is done in the following lemma.

\begin{lemma} \label{5}Let $\lambda_d^{+}$ and $\lambda_d^{-}$ be the sieve weights of Brun's pure sieve with the above notations. Let $k\geq 0$ be a fixed integer, $R_1,...,R_k\geq 1$ and 
\begin{align*}
a_n=\sum_{p_1\dotsm p_k\mid n\atop R_i\leq p_i\leq R_i^{1+\varepsilon}}\left|\sum_{n=p_1\dotsm p_k dm}\lambda_d^{\pm}\right|
\end{align*}
where either the sign $+$ or $-$ is chosen throughout (for $k=0$, we define $p_1\dotsm p_k=1$). Then for $y\gg_A \frac{x}{\log ^{A}x}$ and $x\sim X$ we have
\begin{align}
&\sum_{x\leq n\leq x+y}|a_n|^2\ll_{A} (\log \log X)^{O_k(1)}\frac{y}{\log X}\label{eq6}\\
&\sum_{1\leq h\leq \frac{x}{T}}\sum_{m-n=h\atop m,n\in [x,x+y]}|a_m||a_n|\ll_{A}(\log \log X)^{O_k(1)} \frac{X}{T}\cdot \frac{y^2}{\log^2 X}\label{eq45}.
\end{align}\end{lemma}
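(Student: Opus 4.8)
The plan is to estimate both sums by first expanding the square (respectively the product) and reducing to counting integers in short intervals that lie in the support of $|a_n|$, which consists of integers with $k$ prescribed small prime factors times a number whose remaining prime factorization is constrained by the support of $\lambda_d^{\pm}* 1$. The key structural observation is that $\lambda_n^{\pm}$ is supported on squarefree $d\mid\mathcal P(w)$ with $\nu(d)\le R+1$, and since $R=2\lfloor(\log\log X)^{3/2}\rfloor$ is much larger than the typical number $\log\log X$ of prime factors of $n$ below $w$, the convolution $\lambda_n^{\pm}* 1$ agrees with $\mu* 1$ away from integers with an abnormally large number of prime factors below $w$. Concretely, $a_n\ne 0$ forces $n$ to have $k$ prescribed prime factors $p_i\in[R_i,R_i^{1+\varepsilon}]$ and, on the complementary part $n/(p_1\cdots p_k)$, either to have no prime factor below $w$ (the "good" case, where $\lambda_d^{\pm}* 1$ is just $1_{(n,\mathcal P(w))=1}$ up to sign) or to have more than $R$ prime factors below $w$ (the "bad" case).

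\textbf{Main line of argument.} For \eqref{eq6}, I would split $|a_n|\le b_n + c_n$ where $b_n$ is supported on the good case (so that, after fixing the $p_i$, $b_n$ is bounded by $1_{\mathcal Q}(n)$ for a set $\mathcal Q$ of the type handled in Lemma \ref{15}, or directly by Brun's sieve giving $\ll \frac{y}{\log X}\prod_i\frac1{p_i}$ on summing), and $c_n$ is supported on integers with $\ge R$ prime factors below $w$. For the $b_n$ part one sums over the $O((\log\log X)^{O_k(1)})$ choices — actually $\prod_i\sum_{R_i\le p_i\le R_i^{1+\varepsilon}}\frac1{p_i}\ll(\log\log X)^{O_k(1)}$ via Mertens, since each inner sum is $\ll\log(1+\varepsilon)\ll 1$ but there is also the $p_1\cdots p_k\mid n$ condition contributing at most $d_k(n)$ — and applies a Brun/Selberg sieve upper bound for the count of $n\le x+y$, $n>x$ in the relevant residue structure, using $y\gg_A x/\log^A x$ so the sieve main term dominates the level-of-distribution error. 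For the $c_n$ part, the trivial bound $|c_n|\le d(n)^{O(1)}$ together with the rarity estimate $\#\{n\le x: \omega_{\le w}(n)\ge R\}\ll x\cdot 2^{-R}\log^{O(1)}x$ (from $\sum 2^{\omega_{\le w}(n)}\ll x\log\log X$, say, or a Rankin bound) kills it, since $2^{-R}$ beats any power of $\log X$; one must check the short-interval version, which again follows from the sieve for $y\gg_A x/\log^A x$, or simply from monotonicity comparing $[x,x+y]$ to $[1,2X]$ at the cost of $\log$ factors absorbed into $(\log\log X)^{O_k(1)}$ — here I would use that $|a_n|^2\le |a_n|\cdot d(n)^{O_k(1)}$ is itself dominated by a sieve-majorizable sequence.

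\textbf{The correlation sum.} For \eqref{eq45} the approach is parallel but now I expand $|a_m||a_n|$ with $m-n=h$ and, after fixing the two tuples of small primes, bound the count of $n\in[x,x+y]$ with $n$ and $n+h$ both coprime to $\mathcal P(w)$ (good$\times$good) by Brun's sieve for the linear form pair, giving $\ll \frac{y}{\log^2 X}\cdot\frac{h}{\varphi(h)}$ times the $\prod 1/p_i$ factors; summing over $1\le h\le x/T$ uses $\sum_{h\le H}\frac{h}{\varphi(h)}\ll H$ exactly as in the proof of Lemma \ref{15}, producing the $\frac{X}{T}\cdot\frac{y^2}{\log^2 X}$ shape after noting $y\cdot\frac{x}{T}\asymp \frac{Xy^2}{T}\cdot\frac{1}{y}$ — more carefully, the sum over $h$ of $\frac{y}{\log^2 X}$ is $\frac{y}{\log^2 X}\cdot\frac{x}{T}\ll\frac{X}{T}\cdot\frac{y^2}{\log^2 X}\cdot\frac{x}{Xy}$, and since $y\le x\asymp X$ this is $\ll\frac{X}{T}\cdot\frac{y^2}{\log^2 X}$. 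The good$\times$bad and bad$\times$bad cross terms are again handled by the rarity of $\ge R$ prime factors below $w$, with the $2^{-R}$ saving overwhelming everything.

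\textbf{Main obstacle.} The principal technical point is controlling the "bad" case uniformly: one needs that integers $n\le 2X$ with more than $R=2\lfloor(\log\log X)^{3/2}\rfloor$ prime factors below $w=\exp(\log X/(\log\log X)^3)$ are so sparse that even after multiplying by divisor-type weights $d(n)^{O_k(1)}$ coming from $|a_n|^2$ and by the level-of-distribution losses in a short interval of length $y\gg_A x/\log^A x$, their contribution is $o(y/\log X)$ — this requires a Rankin-type bound $\sum_{n\le x,\,\omega_{\le w}(n)>R} d(n)^{C}\ll x(\log x)^{O(1)}\rho^{-R}$ for some $\rho>1$ and its short-interval analogue via a sieve majorant, with the parameters $R,w$ exactly balanced so that $\rho^{-R}=X^{-o(1)}$ still beats $\log^A x$. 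Everything else is a routine, if lengthy, invocation of Brun's sieve and Mertens' theorem, with the combinatorial factors from the $k$ fixed prime factors absorbed into the harmless $(\log\log X)^{O_k(1)}$.
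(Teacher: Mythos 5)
Your proposal is correct and follows essentially the same route as the paper's proof. The paper decomposes $\theta_n = 1*\lambda^{\pm}_n = 1_{(n,\mathcal P(w))=1} + O\bigl(2^{\nu(n)}1_{\nu(n)>R}\bigr)$, bounds $|a_n|\le \nu(n)^k\bigl(1_{(n,\mathcal P(w)/\Pi)=1}+2^{\nu(n)}1_{\nu(n)>R}\bigr)$, handles the first ("good") piece by Brun's sieve exactly as you propose (with $\nu(n)\ll(\log\log X)^3$ absorbing the combinatorial factor), and disposes of the "bad" piece via Lemma \ref{8}, which is the Hardy--Ramanujan route to the same rarity estimate you obtain by a Rankin moment; the correlation sum is handled identically via Brun's sieve for pairs and $\sum_{h\le H}h/\varphi(h)\ll H$. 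Your description is a little loose in a couple of places (the Mertens computation $\prod_i\sum_{p_i}1/p_i\ll 1$, not $(\log\log X)^{O_k(1)}$; and the "absorbing $\log$ factors" phrase should really be "take $A$ large so $X/\log^A X\ll y/\log X$"), but these are presentational, not gaps.
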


For the proof of this lemma, we need a couple of other lemmas.

\begin{lemma}\label{4} For $x\geq 2$ and positive integer $\ell$, let 
\begin{align*}
\pi_{\ell}(x)=|\{n\in [1,x]:\nu(n)=\ell\}|.
\end{align*}
There exist absolute constants $K$ and $C$ such that
\begin{align*}
\pi_{\ell}(x)<\frac{Kx}{\log x}\frac{\left(\log \log x+C\right)^{\ell-1}}{(\ell-1)!}
\end{align*}
for all $\ell$ and $x\geq 2$.\end{lemma}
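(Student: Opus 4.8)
The statement to prove is Lemma~\ref{4}: the Hardy--Ramanujan--type uniform upper bound $\pi_\ell(x) < \frac{Kx}{\log x}\frac{(\log\log x + C)^{\ell-1}}{(\ell-1)!}$ for all $\ell \geq 1$ and $x \geq 2$, with absolute constants $K, C$.

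\medskip

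\textbf{Approach.} This is a classical result going back to Hardy and Ramanujan; the cleanest route is the inductive argument of Hardy--Ramanujan (or the streamlined version due to Tenenbaum). The plan is to induct on $\ell$. For the base case $\ell = 1$, $\pi_1(x)$ counts prime powers $\leq x$, which is $\ll x/\log x$ by Chebyshev's estimate, so the bound holds for suitable $K$. For the inductive step, write every $n$ with $\nu(n) = \ell$ as $n = p^a m$ where $p$ is (say) the largest prime factor, or more conveniently sum over \emph{all} prime power factors: one has the combinatorial identity $\ell \cdot \pi_\ell(x) = \sum_{p^a \leq x} \pi_{\ell-1}^{(p)}(x/p^a)$, where $\pi_{\ell-1}^{(p)}(y)$ counts integers $\leq y$ with exactly $\ell - 1$ distinct prime factors, none equal to $p$ (this overcounts slightly but in the right direction, or one can be careful). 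Bounding $\pi_{\ell-1}^{(p)}(y) \leq \pi_{\ell-1}(y)$ and inserting the inductive hypothesis, we get
\begin{align*}
\ell\,\pi_\ell(x) \leq \sum_{p^a \leq x} \frac{K(x/p^a)}{\log(x/p^a)}\cdot \frac{(\log\log(x/p^a) + C)^{\ell - 2}}{(\ell - 2)!}.
\end{align*}
One then needs to sum the right-hand side over prime powers $p^a \leq x$.

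\medskip

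\textbf{Key steps in carrying out the sum.} Split the range of $p^a$ at $\sqrt{x}$ (or at $x^{1-\delta}$). For $p^a \leq \sqrt{x}$, the factor $\log(x/p^a) \geq \frac12\log x$ and $\log\log(x/p^a) \leq \log\log x$, so the contribution is $\leq \frac{2Kx}{\log x}\cdot\frac{(\log\log x + C)^{\ell-2}}{(\ell-2)!}\sum_{p^a \leq \sqrt x}\frac{1}{p^a}$. The prime-power sum $\sum_{p^a \leq y}\frac1{p^a} = \log\log y + B + o(1) \leq \log\log x + C'$ for an absolute constant (absorbing prime powers with $a\geq 2$, which contribute $O(1)$). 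This produces the factor $\frac{(\log\log x + C)^{\ell - 2}(\log\log x + C')}{(\ell - 2)!}$, and to close the induction one wants this $\leq \frac{(\log\log x + C)^{\ell-1}}{(\ell-1)!}$ after dividing by $\ell$; this works provided $C$ is chosen large enough relative to $C'$ and the leftover constants, using $\ell - 1 \leq \ell$ to handle the factorial ratio. For the range $\sqrt x < p^a \leq x$, the quantity $x/p^a < \sqrt x$ is small, $\log\log(x/p^a)$ could be negative or small, so one bounds $\pi_{\ell-1}(x/p^a)$ more crudely --- e.g. by $x/p^a$ itself times an $\ell$-dependent factor, or one simply notes $\pi_{\ell - 1}(y) \leq y$ always and $\sum_{\sqrt x < p^a \leq x} \frac{1}{p^a} \ll 1$; this tail contributes a manageable error that is absorbed into the main term by enlarging $K$. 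A cleaner variant: induct on the statement only for $x$ exceeding a fixed $x_0$, handle $x \leq x_0$ by choosing $K$ large, and for $x > x_0$ use the bound with $x/p^a \geq x^{1/2} > x_0^{1/2}$ throughout the main range.

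\medskip

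\textbf{Main obstacle.} The delicate point is keeping the constants \emph{uniform in $\ell$} --- the whole content of the lemma over the trivial Hardy--Ramanujan bound is that $K$ and $C$ do not grow with $\ell$. The induction closes cleanly only because the inequality $\frac{(t + C)^{\ell-2}(t + C')}{\ell \cdot (\ell - 2)!} \leq \frac{(t+C)^{\ell-1}}{(\ell-1)!}$ (with $t = \log\log x$) reduces, after cancellation, to $\frac{\ell - 1}{\ell}\cdot\frac{t + C'}{t + C} \leq 1$, which holds for \emph{all} $\ell \geq 2$ as soon as $C \geq C'$. So the real work is bookkeeping: tracking that the error terms from (i) prime powers with exponent $\geq 2$, (ii) the Mertens-sum remainder $B$, (iii) the range $p^a > \sqrt x$, and (iv) the base case, can all be simultaneously absorbed by one choice of $K$ (for the multiplicative errors) and one choice of $C$ (for the additive shift inside the logarithm), independently of $\ell$ and $x$. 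I would also need to double-check the combinatorial identity/inequality relating $\ell\,\pi_\ell(x)$ to the sum over $\pi_{\ell-1}$, being careful about whether one counts $n = p_1^{a_1}\cdots$ with the constraint that the removed prime is distinct from the remaining ones; the inequality $\pi_{\ell-1}^{(p)}(x/p^a) \le \pi_{\ell-1}(x/p^a)$ goes the correct way and suffices.
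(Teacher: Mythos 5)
The paper offers no proof of this lemma at all --- it simply cites Hardy and Ramanujan --- so you are reconstructing the classical inductive argument from scratch, which is the right thing to attempt. However, your sketch has a genuine gap in the inductive step, exactly at the point you flag as delicate. In the range $p^a\le\sqrt x$ you replace $\frac{1}{\log(x/p^a)}$ by $\frac{2}{\log x}$, and that uniform factor $2$ is fatal: the induction would then require $\frac{2(\ell-1)}{\ell}\cdot\frac{\log\log x+C'}{\log\log x+C}\le 1$, which already fails at $\ell=3$ once $\log\log x$ is large, no matter how $C$ and $K$ are chosen (a multiplicative loss cannot be absorbed into $K$ in an induction on $\ell$). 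Your closing verification ``$\frac{\ell-1}{\ell}\cdot\frac{t+C'}{t+C}\le 1$'' silently drops this factor of $2$. The standard repair is to keep the weight inside the Mertens-type sum and prove
\begin{align*}
\sum_{p^a\le\sqrt x}\frac{1}{p^a}\cdot\frac{\log x}{\log(x/p^a)}\le\log\log x+C',
\end{align*}
which holds because $\frac{\log x}{\log(x/p^a)}\le 1+\frac{2\log p^a}{\log x}$ on this range and $\sum_{p^a\le\sqrt x}\frac{\log p^a}{p^a}\ll\log x$; with this the induction closes with a single choice $C\ge C'$ and no loss in $K$.

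The second problem is the tail $\sqrt x<p^a\le x$: bounding its contribution by $\sum_{\sqrt x<p^a\le x}\frac{x}{p^a}\asymp x$ and ``absorbing it by enlarging $K$'' cannot work uniformly in $\ell$, because for $\ell\gg\log\log x$ the target $\frac{Kx}{\log x}\cdot\frac{(\log\log x+C)^{\ell-1}}{(\ell-1)!}$ is far smaller than $x$. The clean fix, which also removes the tail entirely, is to observe that an integer $n\le x$ has at most one exact prime-power divisor $p^a\,\|\,n$ exceeding $\sqrt x$, so at least $\ell-1$ of its $\ell$ such divisors lie below $\sqrt x$; your exact identity then gives $(\ell-1)\,\pi_\ell(x)\le\sum_{p^a\le\sqrt x}\pi_{\ell-1}(x/p^a)$ for $\ell\ge 2$, and the factor $\ell-1$ on the left is precisely what converts $\frac{1}{(\ell-2)!}$ into $\frac{1}{(\ell-1)!}$. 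With these two corrections (and Chebyshev for the base case $\ell=1$), your argument does yield the lemma with absolute $K$ and $C$.
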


\begin{proof} This is an elementary result of Hardy and Ramanujan from \cite{hardy}.\end{proof}

\begin{lemma}\label{8} Let $a\geq 1$ be fixed, and let $R=2\lfloor (\log \log X)^{\frac{3}{2}}\rfloor$ as before. Then for any $A>0$
\begin{align*}
\sum_{n\sim X\atop \nu(n)\geq R}a^{\nu(n)}\ll_{a,A} \frac{X}{\log^{A} X}.
\end{align*}\end{lemma}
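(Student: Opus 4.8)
The statement to prove is Lemma \ref{8}: for fixed $a\geq 1$ and $R=2\lfloor(\log\log X)^{3/2}\rfloor$, the sum $\sum_{n\sim X,\ \nu(n)\geq R}a^{\nu(n)}$ is $\ll_{a,A}X/\log^A X$ for any $A>0$. The plan is to decompose the sum according to the exact value $\ell=\nu(n)$ and apply the Hardy--Ramanujan bound of Lemma \ref{4} to each dyadic count, and then sum the resulting series in $\ell$, exploiting that the series converges super-geometrically once $\ell$ is a constant multiple larger than $\log\log X$.

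\emph{Step 1: Reduce to a sum over $\ell$.} Writing the sum as $\sum_{\ell\geq R}a^{\ell}\,\bigl(\pi_\ell(2X)-\pi_\ell(X)\bigr)\leq \sum_{\ell\geq R}a^{\ell}\pi_\ell(2X)$, Lemma \ref{4} gives
\begin{align*}
\sum_{n\sim X,\ \nu(n)\geq R}a^{\nu(n)}\ \ll\ \frac{X}{\log X}\sum_{\ell\geq R}a^{\ell}\frac{(\log\log X+C)^{\ell-1}}{(\ell-1)!}.
\end{align*}
Here I should also note that $\nu(n)\leq \log n/\log 2\ll \log X$, so the sum over $\ell$ is in fact finite; but it is cleanest to just bound the full tail of the series.

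\emph{Step 2: Estimate the tail of the Poisson-type series.} Set $\lambda=a(\log\log X+C)$, so the series is $a^{-1}\sum_{\ell\geq R}\lambda^{\ell-1}/(\ell-1)!\ =\ a^{-1}\sum_{m\geq R-1}\lambda^m/m!$. Since $R-1\sim 2(\log\log X)^{3/2}$, while $\lambda\asymp \log\log X$, we are summing the Poisson tail far beyond the mean. The standard tail bound $\sum_{m\geq M}\lambda^m/m!\leq (e\lambda/M)^M$ (valid for $M\geq \lambda$, proved by comparing with a geometric series once $m\geq M$, using $\lambda/(m+1)\leq \lambda/M\leq 1$) gives, with $M=R-1$, a bound of the shape $(e\lambda/M)^M$. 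Now $e\lambda/M\ll (\log\log X)/(\log\log X)^{3/2}=(\log\log X)^{-1/2}$, so
\begin{align*}
\sum_{m\geq R-1}\frac{\lambda^m}{m!}\ \ll\ \bigl((\log\log X)^{-1/2}\bigr)^{2(\log\log X)^{3/2}(1+o(1))}\ =\ \exp\bigl(-(\log\log X)^{3/2}\log\log\log X\,(1+o(1))\bigr).
\end{align*}
This is smaller than $(\log X)^{-A}$ for every fixed $A$, since $(\log\log X)^{3/2}\log\log\log X$ grows faster than $A\log\log X$. Combining with the factor $X/\log X$ from Step 1 yields the claimed bound, with room to spare.

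\emph{Main obstacle.} There is no serious obstacle here; the only points requiring a little care are (i) making sure the constant $C$ from Lemma \ref{4} is harmless --- it just shifts $\lambda$ by a bounded multiplicative amount and does not affect the asymptotics --- and (ii) justifying the Poisson tail inequality $\sum_{m\geq M}\lambda^m/m!\leq (e\lambda/M)^M$ for $M\geq \lambda$; this follows from $\lambda^m/m!\leq (\lambda^M/M!)(\lambda/M)^{m-M}$ for $m\geq M$ together with $M!\geq (M/e)^M$. Everything else is a routine comparison of orders of magnitude, exactly of the type that makes the choice $R=2\lfloor(\log\log X)^{3/2}\rfloor$ (with exponent $3/2>1$) work: it forces $M$ to exceed any constant multiple of $\lambda$, which is precisely what is needed for the tail to beat every power of $\log X$.
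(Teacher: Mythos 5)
Your proof is correct and follows essentially the same route as the paper: decompose by $\ell=\nu(n)$, apply the Hardy--Ramanujan bound (Lemma \ref{4}), and then use a Stirling-type bound $(\ell-1)!\geq((\ell-1)/e)^{\ell-1}$ to show the tail decays super-polynomially. The paper simply observes that once $\ell\geq R$ each ratio $ae(\log\log X+C)/(\ell-1)\leq\tfrac12$, giving a crude geometric bound $\ll 2^{-R}$, whereas you compute the sharper Poisson-tail rate; both are more than enough.
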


\begin{proof} The sum in question can be written as
\begin{align*}
\sum_{\ell\geq R}a^{\ell}|\{n\sim X:\nu(n)=\ell\}|,
\end{align*}
and by Lemma \ref{4} this is
\begin{align*}
&\ll \frac {X}{\log X}\sum_{\ell\geq R}\left(\frac{ae(\log \log X+C)}{\ell-1}\right)^{\ell-1}\\
&\ll_{a} X\cdot 2^{-R}\ll_A \frac{X}{\log^A X}
\end{align*}
by the definition of $R$.\end{proof}

We can now proceed to proving Lemma \ref{5}.\\

\begin{proof}[Proof of Lemma \ref{5}.] It suffices to consider the lower bound sieve weights. We assume $k\geq 1$, as the case $k=0$ is similar but a little simpler. Define $\theta_n=1*\lambda_n^{-}$. We have 
\begin{align*}
\theta_n&=\sum_{\substack{d\mid n\\\nu(d)\leq R\\d\mid \mathcal{P}(w)}}\mu(d)\nonumber\\
&=\sum_{d\mid (n,\mathcal{P}(w))}\mu(d)+O\left(\sum_{d\mid n\atop \nu(d)>R}|\mu(d)|\right)\nonumber\\
&=1_{(n,\mathcal{P}(w))=1}+O(2^{\nu(n)}1_{\nu(n)>R}).
\end{align*}
Using this, we bound the sum \eqref{eq6}. Denoting by $\Pi$ the product of all the primes in $\bigcup_{i=1}^{k}[R_i,R_i^{1+\varepsilon}]\cap[1,w]$, we observe that
\begin{align}\label{eq37}
a_n&=\sum_{p_1\dotsm p_k\mid n\atop R_i\leq p_i\leq R_i^{1+\varepsilon}}|\theta_{\frac{n}{p_1\dotsm p_k}}|\leq \nu(n)^k(1_{\left(n,\frac{\mathcal{P}(w)}{\Pi}\right)=1}+2^{\nu(n)}1_{\nu(n)>R}).
\end{align}
The contribution of the first term on the right-hand side of \eqref{eq37} to the sum \eqref{eq6} is
\begin{align*}
&\ll \sum_{x\leq n\leq x+y\atop \left(n,\frac{\mathcal{P}(w)}{\Pi}\right)=1} \nu(n)^{2k}\ll (\log \log X)^{O_k(1)}\sum_{x\leq n\leq x+y\atop \left(n,\frac{\mathcal{P}(w)}{\Pi}\right)=1}1\ll (\log \log X)^{O_k(1)}\frac{y}{\log X}
\end{align*}
by Brun's sieve and the fact that $\nu(n)\ll (\log \log X)^3$ when $(n,\mathcal{P}(w))=1.$ On the other hand, the the second term on the right-hand side of \eqref{eq37} contributes to \eqref{eq6} at most
\begin{align}
&\ll \sum_{x\leq n\leq x+y\atop \nu(n)\geq R}\nu(n)^{2k}4^{\nu(n)}\ll_k \sum_{x\leq n\leq x+y\atop \nu(n)\geq R}5^{\nu(n)}\ll_{A,k}\frac{X}{\log^A X}\label{eq46}
\end{align}
by Lemma \ref{8}. This proves the first bound in Lemma \ref{5}.\\

The second bound in Lemma \ref{5} is proved analogously. The two terms in \eqref{eq37} can be combined in four ways into products of two terms (two of these are symmetric). One of the cases contributes to \eqref{eq45} at most
\begin{align*}
\ll \sum_{1\leq h\leq \frac{x}{T}}\sum_{m-n=h\atop m,n\in [x,x+y]}\nu(m)^k\nu(n)^k1_{\left(m,\frac{\mathcal{P}(w)}{\Pi}\right)=1}1_{\left(n,\frac{\mathcal{P}(w)}{\Pi}\right)=1}\ll (\log \log X)^{O_k(1)} \frac{X}{T}\cdot \frac{y^2}{\log^2 X}
\end{align*}
by Brun's sieve. The two symmetric terms obtained by multiplying terms in \eqref{eq37} have an impact of
\begin{align*}
\ll \sum_{1\leq h\leq \frac{x}{T}}\sum_{m-n=h\atop m,n\in [x,x+y]}\nu(m)^k\nu(n)^k1_{\left(m,\frac{\mathcal{P}(w)}{\Pi}\right)=1}2^{\nu(n)}1_{\nu(n)>R},
\end{align*}
where the coefficients depending on $m$ can be bounded trivially, while the coefficients depending on $n$ save an arbitrary power of logarithm, as in formula \eqref{eq46}. Finally, the fourth term arising from multiplication of  \eqref{eq37} also saves an arbitrary power of logarithm by the same argument.\end{proof}

\section{Proof of Theorem 5}

Before proving Theorem \ref{t5}, we need some preparation. Define
\begin{align*}
S_h(x)=\sum_{x\leq p_1p\leq x+h\atop P_1\leq p_1\leq P_1^{1+\varepsilon}}1,\quad S_X=S_X(X),
\end{align*}
and set
\begin{align*}
w=\exp\left(\frac{\log X}{(\log \log X)^3}\right).
\end{align*}
We use Buchstab's identity twice to decompose 
\begin{align*}
S_h(x)&=\sum_{\substack{x\leq p_1n\leq x+h\\P_1\leq p_1\leq P_1^{1+\varepsilon}\\(n,\mathcal{P}(w))=1\\n>1}}1-\sum_{\substack{x\leq p_1q_1n\leq x+h\\ P_1\leq p_1\leq P_1^{1+\varepsilon}\\w\leq q_1<\sqrt {x}\\(n,\mathcal{P}(q_1))=1\\n>1}}1\\
&=\sum_{\substack{x\leq p_1n\leq x+h\\P_1\leq p_1\leq P_1^{1+\varepsilon}\\(n,\mathcal{P}(w))=1\\n>1}}1-\sum_{\substack{x\leq p_1q_1n\leq x+h\\ P_1\leq p_1\leq P_1^{1+\varepsilon}\\w\leq q_1<\sqrt {x}\\(n,\mathcal{P}(w))=1\\n>1}}1+\sum_{\substack{x\leq p_1q_1q_2n\leq x+h\\ P_1\leq p_1\leq P_1^{1+\varepsilon}\\w\leq q_2<q_1<\sqrt {x}\\(n,\mathcal{P}(q_2))=1\\n>1}}1.
\end{align*}
Call these sums $\Sigma_1(h),\Sigma_2(h)$ and $\Sigma_3(h)$, respectively, and call the corresponding dyadic sums $\Sigma_1(X),\Sigma_2(X)$ and $\Sigma_3(X)$, respectively. We will divide $\Sigma_3(h)$ into two parts $\Sigma_3'(h)$ and $\Sigma_3''(h)$ in such a way that $\Sigma_1(h), \Sigma_2(h)$ and $\Sigma_3'(h)$ can be evaluated asymptotically, while the error from $\Sigma_3''(h)$ is manageable. To be precise, we will prove that
\begin{align}
\frac{1}{h}S_h(x)&=\frac{1}{h}(\Sigma_1(h)-\Sigma_2(h)+\Sigma_3'(h)+\Sigma_3''(h))\nonumber\\
&=\frac{1}{X}(\Sigma_1(X)-\Sigma_2(X)+\Sigma_3'(X))+\frac{1}{h}\Sigma_3''(h)+o\left(\frac{1}{\log X}\right)\label{eq38}\\
&=\frac{1}{X}S_X+\frac{1}{h}\Sigma_3''(h)-\frac{1}{X}\Sigma_3''(X)+o\left( \frac{1}{\log X}\right)\nonumber\\
&\geq \frac{1}{X}S_X-\frac{1}{X}\Sigma_3''(X)+o\left( \frac{1}{\log X}\right)\nonumber\\
&\geq \varepsilon\cdot \frac{1}{X}S_X\label{eq39}
\end{align}
almost always, with the steps \eqref{eq38} and \eqref{eq39} being the nontrivial ones. This estimate will then immediately  lead to Theorem \ref{t5}. To prove these statements, we require some auxiliary results for the cases of $\Sigma_1(h),\Sigma_2(h)$ and $\Sigma_3(h)$.

\subsection{Mean square bounds related to Theorem 5}

We need three additional mean square bounds to deal with the sums $\Sigma_1(h),\Sigma_2(h)$ and $\Sigma_3(h)$. The first is a relative of Proposition \ref{p3} and would already improve slightly the exponent $c=5+\varepsilon$ obtained from the proof of Theorem \ref{t4}. It will not be applied directly in the proof of Theorem \ref{t5}, but instead as an ingredient in the proof of Proposition \ref{p6}. 

\begin{proposition}\label{p8} Let  $X^{1+o(1)}\geq T\geq T_0=X^{0.01}$, and $0\leq \alpha_1\leq 1$. Furthermore, let
\begin{align*}
P(s)=\sum_{P\leq p\leq P'}\frac{1}{p^s},\quad M(s)=\sum_{m\sim M}\frac{b_m}{m^s},
\end{align*}
with $P=X^{\nu+o(1)}$, $P'\in \left[P+\frac{P}{\log X},2P\right]$, $0< \nu\leq \frac{1}{2}$, $|b_m|\leq d_r(m)$ for fixed $r$, and $PM=X^{1+o(1)}$. Also let 
\begin{align*}
\mathcal{U}=\{t\in [T_0,T]:|P(1+it)|\geq P^{-\alpha_1}\}.
\end{align*}
Then, 
\begin{align*}
\int_{\mathcal{U}}|P(1+it)M(1+it)|^2dt\ll (\log X)^{-100}+X^{\frac{1}{2}-\min\{2\sigma(\nu),\frac{\nu}{2}\}+o(1)}\cdot \frac{|\mathcal{U}'|P}{X}
\end{align*}
for some well-spaced $\mathcal{U}'\subset \mathcal{U}$.
\end{proposition}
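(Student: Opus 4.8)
The idea is to run the same two-step argument as in the proof of Proposition~\ref{p3}: first pass to a well-spaced set of points where the size of $P(1+it)$ is large, then bound the Dirichlet polynomial $M(s)$ at these points by splitting it via Heath-Brown's decomposition (Lemma~\ref{11}). The difference from Proposition~\ref{p3} is that now $P(s)$ is a long zeta sum supported on a prime range of length $P=X^{\nu+o(1)}$ with $\nu\leq \tfrac12$, and instead of applying Lemma~\ref{7} or Watt's theorem we exploit the pointwise saving coming from the exponent pair estimate of Subsection~\ref{subsec:Exp} together with the trivial zeta-sum bound $|P(1+it)|\ll 1/t$. Concretely, the first move is to note that
\begin{align*}
\int_{\mathcal{U}}|P(1+it)M(1+it)|^2\,dt\ll \sum_{t\in \mathcal{U}'}|P(1+it)M(1+it)|^2
\end{align*}
for some well-spaced $\mathcal{U}'\subset \mathcal{U}$ (standard reduction via a net of unit-length intervals, as in Proposition~\ref{p3}).

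\textbf{Key steps.} Having reduced to a discrete sum, I would apply Heath-Brown's decomposition (Lemma~\ref{11}) with $k=2$ to the polynomial $M(s)$, writing $|M(1+it)|\ll (\log^C X)(|G_1|+\cdots+|G_L|)$, $L\ll \log^C X$, where each $G_j$ is a product of at most $4$ prime-factored polynomials, and any factor of length exceeding $X^{1/2}$ is a zeta sum. Since $\nu\leq \tfrac12$ we have $PM=X^{1+o(1)}$ with $M\geq X^{1/2-o(1)}$, so in every term of the decomposition there is always a factor of length $\gg X^{1/4-o(1)}$ that is prime-factored. For the terms all of whose factors are prime-factored (or more precisely, having at least one prime-factored factor of length in the appropriate mid-range), the prime-factored property gives a pointwise saving of an arbitrary power of logarithm on that factor, and combining with the Halász–Montgomery inequality (Lemma~\ref{18}) applied to the remaining factors times $P(s)$ absorbs these terms into the $(\log X)^{-100}$ error, exactly as in the prime-factored part of Proposition~\ref{p3}. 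For the terms that contain a long zeta sum $N(s)$ of length $\gg X^{1/2-o(1)}$, I would instead use that $|P(1+it)|$ and $|N(1+it)|$ are themselves zeta sums, so we can bound the discrete sum over $\mathcal{U}'$ by applying the exponent-pair estimate $|P(1+it)|\ll t^{-\sigma(\nu)+o(1)}$ (valid on the range $t^{\nu}\leq P\ll t^{\nu+o(1)}$, which holds for $t$ up to $X^{1/\nu}$, covering all $t\leq T\leq X^{1+o(1)}$ once $\nu\leq 1$) and the trivial bound $|P(1+it)|\ll 1/t\ll P^{-1/2}$ for $t\geq P^{1/2}$ — whichever is smaller, giving $|P(1+it)|^2\ll X^{-\min\{2\sigma(\nu),\nu/2\}+o(1)}$; the remaining zeta-sum factor $|N(1+it)|^2$ summed over the well-spaced set $\mathcal{U}'$ is handled by the mean value / Halász–Montgomery bound, contributing $\ll (N+|\mathcal{U}'|T^{1/2})(\log X)^{o(1)}\cdot \frac{1}{N}$, and using $N\geq X^{1/2-o(1)}$ together with $P^{-1/2}\ll X^{-\nu/2+o(1)}$ (recall $P=X^{\nu+o(1)}$) one collects the stated bound $X^{1/2-\min\{2\sigma(\nu),\nu/2\}+o(1)}\cdot\frac{|\mathcal{U}'|P}{X}$ plus the $(\log X)^{-100}$ term. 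One should take care that the factor of $P/X$ in the final bound is exactly what comes from writing $X^{-\nu/2+o(1)} = X^{1/2+o(1)}\cdot P^{1/2}/X$ and then $P^{1/2}\ll P/X^{o(1)}\cdot X^{o(1)}$ combined with the length relation $N\gg X^{1/2}$; I would track these exponents carefully but they are all of the type already handled in Proposition~\ref{p3}.

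\textbf{Main obstacle.} The delicate point is the case analysis in Heath-Brown's decomposition: one must verify that in \emph{every} surviving term, either a prime-factored factor of suitable length is available (so the $(\log X)^{-100}$ saving kicks in), or else the long zeta-sum factor has length $\gg X^{1/2-o(1)}$ \emph{and} the complementary part (together with $P(s)$) can be bounded using the exponent-pair saving on $P$ without the need for a separate large-values input. Because $M$ can be as large as $X^{1-\nu+o(1)}$ and $\nu$ can be close to $0$, the zeta sums appearing in the decomposition can be quite long, and one has to confirm that there is never a configuration of two medium-length zeta sums multiplying to $X^{1-\nu}$ that evades both mechanisms; this is exactly where choosing $k=2$ (rather than a larger $k$) in Lemma~\ref{11} is important, since it forces any factor above $X^{1/2}$ to be a genuine zeta sum while keeping the total number of factors small. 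The bookkeeping of the $o(1)$ exponents and of the condition $\min\{2\sigma(\nu),\nu/2\}$ is routine once this structural point is settled.
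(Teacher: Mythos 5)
Your proposal inverts the roles of the two polynomials, and this creates a genuine gap. You propose to apply Heath-Brown's decomposition (Lemma \ref{11}) to $M(s)$; but $M(s)=\sum_{m\sim M}b_m m^{-s}$ has \emph{arbitrary} divisor-bounded coefficients, and Lemma \ref{11} only applies to polynomials of the form $\sum_{P\leq p<P'}p^{-s}$ (it rests on the combinatorial identity for $\Lambda$). In the intended application (Proposition \ref{p6}), $M(s)$ is a complicated polynomial involving several prime variables and a rough variable, so no such decomposition is available. Symmetrically, you propose to use the exponent-pair estimate $|P(1+it)|\ll t^{-\sigma(\nu)+o(1)}$ and the trivial bound $|P(1+it)|\ll 1/t$ directly on $P(1+it)$; both of these are estimates for \emph{zeta sums} $\sum_{n\in I}n^{-1-it}$, i.e.\ sums over consecutive integers, and neither is valid for a sum restricted to primes. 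The paper's proof does exactly the opposite of what you suggest: it applies Heath-Brown's decomposition to $P(s)$ (which is of the required form), obtaining either a single zeta sum $N(s)$ of length $P^{1-o(1)}$ --- to which the exponent-pair bound and, for $N\ll t$, the bound $\ll 1/t$ legitimately apply, producing the $2\sigma(\nu)$ term --- or a product $M_1(s)M_2(s)$ of prime-factored polynomials, where the pointwise $(\log X)^{-A}$ saving on $M_2$ and the Hal\'asz--Montgomery inequality applied to $M(s)M_1(s)$ (of length $\gg X/P^{1/2}$) produce the $\nu/2$ term. Throughout, $M(s)$ is treated as a black box via Lemma \ref{18}.

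A secondary issue is your final bookkeeping: the identity you write, $X^{-\nu/2+o(1)}=X^{1/2+o(1)}\cdot P^{1/2}/X$, is false in general ($P^{1/2}X^{-1/2}=P^{-1/2}$ only when $P=X^{1/2}$), and the factor $P/X$ in the stated bound actually arises because the Hal\'asz--Montgomery step is applied to a polynomial of length $X/P$ (zeta-sum case) or $X/P^{1/2}$ (prime-factored case), not from manipulating $P^{-1/2}$. The discretization to a well-spaced set $\mathcal{U}'$ and the overall two-case architecture are right in spirit, but the proof does not go through as written because the two key estimates are applied to objects for which they do not hold.
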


\begin{proof}  Note that Heath-Brown's decomposition (Lemma \ref{11}) gives
\begin{align*}
|P(1+it)| \ll (\log^C X)(|G_1(1+it)|+\dotsm+|G_L(1+it)|)
\end{align*}
with $L\leq \log^C X$ and each $G_j(s)$ either of the form $G_j(s)=N(s)$ with $N(s)$ a zeta sum of length $P^{1-o(1)}$, or $G_j(s)=M_1(s)M_2(s)$ with $M_1$ and $M_2$ prime-factored polynomials of length $M_1\geq M_2\geq \exp(\frac{\log X}{\log \log X}),M_1M_2=P^{1-o(1)}.$ To bound the contribution of the zeta sum, we divide the integral over $\mathcal{U}$ into integrals over dyadic intervals $[T_1,2T_1]$ with $T_1\in [T_0,T]$, and write $N=T_1^{\mu+o(1)}$ with $\mu\geq \nu$. If $\mu>1$, we know that $|N(1+it)|\ll \frac{\log t}{t}$ and $M(1+it)\ll (\log X)^{O(1)}$, so
\begin{align*}
\int_{\mathcal{U}\cap [T_1,2T_1]}|M(1+it)N(1+it)|^2dt\ll \frac{(\log X)^{O(1)}}{T_0}.
\end{align*}
If $\mu\leq 1$, we first pick a well-spaced $\mathcal{U}'\subset \mathcal{U}$ such that
\begin{align*}
\int_{\mathcal{U}}|M(1+it)N(1+it)|^2dt\ll \sum_{t\in \mathcal{U}'}|M(1+it)N(1+it)|^2.
\end{align*}
Now the Halász-Montgomery inequality and the the fact that $N(s)$ is a zeta sum give
\begin{align*}
\sum_{t\in\mathcal{U}'\cap [T_1,2T_1]}|M(1+it)N(1+it)|^2&\ll T^{-2\sigma(\nu)+o(1)}\sum_{t\in\mathcal{U}'\cap [T_1,2T_1]}|M(1+it)|^2\\
&\ll T^{-2\sigma(\nu)+o(1)}\left(1+\frac{|\mathcal{U}'|T_1^{\frac{1}{2}+o(1)}}{\frac{X}{P}}\right).
\end{align*}

To deal with the contribution of the prime-factored polynomials $M_i(s)$, we may use the Halász-Montgomery inequality in a manner analogous to the above to obtain the estimate
\begin{align*}
&\int_{\mathcal{U}}|M(1+it)M_1(1+it)M_2(1+it)|^2dt\ll (\log X)^{-100}\left(1+\frac{|\mathcal{U}'|T^{\frac{1}{2}+o(1)}}{\frac{X}{P^{\frac{1}{2}}}}\right),
\end{align*}
since $MM_1\gg \frac{X^{1+o(1)}}{P^{\frac{1}{2}}}$
Taking the maximum of these two results produces the claimed bound. \end{proof} 

Our second mean square bound is a type I estimate where we exploit a long zeta sum with the help of Watt's theorem. In the cases of $\Sigma_1(h)$ and $\Sigma_2(h)$, this is necessary, and in the case $\Sigma_3(h)$ it improves our exponent for Theorem \ref{t5}. A closely related estimate can be found for example in \cite[Chapter 9]{harman-sieves}.

\begin{proposition} \label{p5} Let  $X^{1+o(1)}\gg T\geq T_0$, and let $M(s), N(s),P(s)$ be Dirichlet polynomials with coefficients bounded by $X^{o(1)}$ and supported on the intervals $[M,2M],[N,2N]$,$[P,2P]$, respectively. Denote $Q(s)=\sum_{m\sim Q}\frac{a_m}{m^s}$, and let $N(s)$ be a zeta sum. Suppose in addition that
\begin{align*}
MNP=X^{1+o(1)},\,\, PQ^2\leq X^{\frac{1}{4}},\,\,M^2P\ll X^{1+o(1)}.
\end{align*}
Then 
\begin{align*}
\int_{T_0}^{T}|M(1+it)N(1+it)P(1+it)Q(1+it)|^2dt\ll X^{o(1)} \left(Q^{-1}+\frac{1}{T_0}\right)\max_{m\sim Q}|a_m|^2.
\end{align*}
\end{proposition}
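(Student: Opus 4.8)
\textbf{Proof plan for Proposition \ref{p5}.}

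The plan is to reduce the fourth-moment-type integral to an application of Watt's theorem (Lemma \ref{16}) with $N(s)$ playing the role of the long zeta sum and a suitable short polynomial playing the role of $M(s)$ in that lemma. First I would use the mean value theorem (Lemma \ref{12}) to control the short factors $P(s)$ and $Q(s)$ pointwise-in-mean, but more precisely I would combine $M(s),P(s),Q(s)$ into a single polynomial $R(s) = M(s)P(s)Q(s)$ supported on $[R,8R]$ with $R \asymp MPQ$, whose coefficients are bounded by $d_3$-type divisor bounds times $\max_{m\sim Q}|a_m|$. Since $MNP = X^{1+o(1)}$ and the zeta sum $N(s)$ has length $N$, I would like to apply Lemma \ref{16} in the shape $\int_{T_0}^{T} |N(1+it)|^4 |R(1+it)|^2 \, dt$ — but $N$ appears only to the second power here, not the fourth. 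So instead the right move is to split: write $|M N P Q|^2 = |N|^2 \cdot |M P Q|^2$ and apply Cauchy–Schwarz, or better, observe that by Cauchy–Schwarz in the form $\int |N M P Q|^2 \le \left(\int |N|^4 |PQ|^2\right)^{1/2}\left(\int |N|^0 |M|^4 \cdots\right)$ is awkward; the cleaner route is to note $M^2 P \ll X^{1+o(1)}$ and $MNP = X^{1+o(1)}$ force $N \gg X^{o(1)}/\!\sqrt{P}$-type relations, so that $MPQ$ is genuinely short and one applies Watt directly with $\mathcal{M}(s) := M(s)P(s)Q(s)$ of length $\ll MPQ \ll X^{1+o(1)}/N$.

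Concretely: apply Lemma \ref{16} with the zeta sum $N(s)$ in the role of ``$N(s)$'' and $\mathcal{M}(s)=M(s)P(s)Q(s)$ of length $\mathcal{M} \asymp MPQ$ in the role of ``$M(s)$''. The lemma requires $\mathcal{M} \ll T^{1+o(1)}$, which holds since $\mathcal{M} \ll X^{1+o(1)} \ll T^{1+o(1)}$ by the trivial reduction $T\le X^{1+o(1)}$ (wait — we need $T$ not too small; since $T\ge T_0 = X^{0.01}$ and $\mathcal{M}$ could be as large as $X^{1+o(1)}$, this hypothesis must be checked, and indeed $M^2 P \ll X^{1+o(1)}$ together with the other constraints is exactly what keeps $\mathcal M$ under control — this is where those hypotheses get used). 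Watt's theorem then gives
\begin{align*}
\int_{T_0}^{T}|N(1+it)|^4|\mathcal{M}(1+it)|^2 dt \ll X^{o(1)}\left(\frac{T}{\mathcal{M}N^2}\left(1+\mathcal{M}^2 T^{-1/2}\right)+\frac{1}{T_0^3}\right)\max_{m\sim Q}|a_m|^2,
\end{align*}
and since $|N(1+it)|^4 \ge |N(1+it)|^2$ is the wrong direction, I instead bound $|MNPQ|^2 \le |N|^4 |MPQ|^2 \cdot \min(1,|N|^{-2})$ — no; the honest statement is that one first uses Cauchy–Schwarz to split the integrand as $|N(1+it)|^2 \cdot |N(1+it) M(1+it) P(1+it) Q(1+it)|^2 \big/ |N(1+it)|^2$ is circular. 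The actual mechanism: estimate $\int |MNPQ|^2 \ll \left(\int |N|^4|PQ|^2\right)^{1/2}\left(\int |N|^{?}|M|^{?}\right)^{1/2}$, but the clean version used in the literature (cf.\ \cite[Chapter 9]{harman-sieves}) is to put two of the ``short'' polynomials with $N$ to make a fourth power and keep the rest via the mean value theorem; since $N$ is already a zeta sum, one takes $N(s)^2$ essentially for free via $|N(1+it)|\ll 1$ pointwise when... no, $N$ can be long. I will therefore follow the standard route: bound $|N(1+it)|^2 \le |N(1+it)|^4 N^{?}$ is false. The correct and clean statement is simply to combine $N$ with itself: since the integrand has only $|N|^2$, apply Cauchy–Schwarz to get $\int|MNPQ|^2 \le (\int |N|^4 |Q|^2)^{1/2}(\int |M|^4|P|^4|Q|^2\cdot|N|^0 \cdots)$ — too lossy.

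\emph{Main obstacle.} The genuine difficulty, which I would resolve by the trick actually used in such arguments, is that Watt's theorem wants a \emph{fourth} power of the zeta sum, whereas the Dirichlet polynomial $F(s)$ we are estimating supplies only a \emph{second} power of $N(s)$. The resolution is to apply Cauchy–Schwarz as $\int_{T_0}^{T}|MNPQ|^2 \le \Big(\int_{T_0}^{T}|N|^4|Q|^2\Big)^{1/2}\Big(\int_{T_0}^{T}|M|^4|P|^4|Q|^2\Big)^{1/2}$ and then bound the first factor by Watt's theorem (Lemma \ref{16}) with $N$ the zeta sum and $Q$ the short polynomial — here $Q^2 \le PQ^2 \le X^{1/4}$ ensures $Q$ is short enough and the ``$1+M^2T^{-1/2}$''-type term in Watt stays bounded — obtaining $\ll X^{o(1)}(Q^{-1}N^{-2} \cdot N^2 + T_0^{-3})^{1/2}\max|a_m| \cdot (\cdots)$ after using $T \ll X^{1+o(1)} \ll MNP \cdot X^{o(1)}$; for the second factor one uses the mean value theorem (Lemma \ref{12}) together with $M^2 P \ll X^{1+o(1)}$ and $PQ^2 \le X^{1/4}$ to see $\int |M|^4|P|^4|Q|^2 \ll X^{o(1)} (M^2P^2 + T)(\cdots) \ll X^{1+o(1)} \cdot (\cdots)$, while $\int |N|^4 |Q|^2$ carries the saving; multiplying the two square roots and using $MNPQ = X^{1+o(1)}Q$ collapses everything to $X^{o(1)}(Q^{-1} + T_0^{-1})\max_{m\sim Q}|a_m|^2$. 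Verifying that all of Watt's hypotheses ($T^{1+o(1)} \gg \mathcal{M}, N \ge 1$, $T\ge T_0 \ge T^{\varepsilon}$) are met under the stated constraints $PQ^2 \le X^{1/4}$ and $M^2P \ll X^{1+o(1)}$, and that the divisor-bounded coefficients only cost $X^{o(1)}$, is the routine-but-essential bookkeeping I would carry out in detail.
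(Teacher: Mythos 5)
Your eventual Cauchy--Schwarz split
\[
\int |MNPQ|^2 \le \Bigl(\int |N|^4|Q|^2\Bigr)^{1/2}\Bigl(\int |M|^4|P|^4|Q|^2\Bigr)^{1/2}
\]
does not give the claimed bound. The second factor is the mean square of a polynomial of length $M^2P^2Q$ (not $M^2P^2$ as you wrote when invoking the MVT), and the stated hypotheses do not keep this below $X^{1+o(1)}$. Take for example $M=X^{0.45}$, $N=X^{0.45}$, $P=X^{0.1}$, $Q=X^{0.01}$, $T=X$, $T_0=X^{0.01}$: all of $MNP=X^{1+o(1)}$, $PQ^2\le X^{1/4}$, $M^2P\ll X^{1+o(1)}$ hold, yet $M^2P^2Q=X^{1.11}$, so the MVT for the second factor gives only $\ll X^{o(1)}\max|a_m|^2$, while Watt applied to $|N|^4|Q|^2$ gives $\ll (T/(QN^2))\max|a_m|^2 = X^{0.09+o(1)}\max|a_m|^2$. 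The product of square roots is then $\gg X^{0.045}\max|a_m|^2$, far above the target $\ll Q^{-1}\max|a_m|^2 = X^{-0.01}\max|a_m|^2$.

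The correct split, which is the one in the paper, is
\[
\Bigl(\int |N|^4|PQ^2|^2\Bigr)^{1/2}\Bigl(\int |M|^4|P|^2\Bigr)^{1/2}.
\]
Putting one copy of $P$ and both copies of $Q$ with the zeta sum makes Watt's ``short'' polynomial have length exactly $PQ^2$, which the hypothesis $PQ^2\le X^{1/4}$ was tailored to control (so the $1+(PQ^2)^2T^{-1/2}$ factor stays $\ll 1$ once $T\gg X^{1/2}$), and leaves the second factor a mean square of a polynomial of length $M^2P$, which the other hypothesis $M^2P\ll X^{1+o(1)}$ makes cost-free. In short, each hypothesis in the proposition is designed precisely for one side of this particular split; other ways of distributing $M,P,Q$ between the two factors, including the one you settled on, do not match the given exponents. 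As a practical check, before committing to a Cauchy--Schwarz split in this kind of argument it is worth plugging in extremal choices of the parameters permitted by the hypotheses and verifying the exponent budget balances.
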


\begin{remark}
In all our applications, the polynomial $Q(s)$ has length essentially $X^{\varepsilon}$, and it is used to win by $X^{\varepsilon^2}$, say, in our estimates. 
\end{remark}

\begin{proof}  We will reduce the proposition to Watt's theorem (Lemma \ref{16}). Divide the integration domain into dyadic intervals $[T_1,2T_1]$. By Cauchy-Schwarz, the mean value theorem and Watt's theorem, we see that
\begin{align*}
&\int_{T_1}^{2T1}|M(1+it)N(1+it)P(1+it)Q(1+it)|^2dt\\
&\ll \left(\int_{T_1}^{2T1}|N(1+it)|^4|P(1+it)Q(1+it)^2|^2 dt\right)^{\frac{1}{2}}\\
&\quad\cdot\left(\int_{T_1}^{2T_1}|M(1+it)|^4 |P(1+it)|^2 dt\right)^{\frac{1}{2}}\\
&\ll \left(\left(\frac{T_1^{o(1)}(T_1+T_1^{\frac{1}{2}}P^2Q^4)}{N^2PQ^2}+\frac{T^{o(1)}}{T_1^3}\right)\max_{m\sim Q}|a_m|^4\right)^{\frac{1}{2}}\left(\frac{T_1+M^2P}{M^2P}\right)^{\frac{1}{2}}\\
&\ll\left(\left(\frac{T_1^{o(1)}(T_1+T_1^{\frac{1}{2}}P^2Q^4)}{N^2PQ^2}\right)\max_{m\sim Q}|a_m|^4\right)^{\frac{1}{2}}\left(\frac{T_1+M^2P}{M^2P}\right)^{\frac{1}{2}}+\frac{X^{o(1)}}{T_0}\max_{m\sim Q}|a_m|^2.
\end{align*}
Hence, we need 
\begin{align*}
(X+X^{\frac{1}{2}}P^2Q^4)(X+M^2P)\ll(MNPX^{o(1)})^2,
\end{align*}
and this is guaranteed by our conditions.\end{proof}

For the $\Sigma_3(h)$ case in Subsection \ref{subsec:sigma}, we also need the following mean square bound, which is somewhat analogous to Proposition \ref{p4} and is based on Propositions \ref{p1}, \ref{p2} and \ref{p8}, but it will be clear only later how it is crucial for proving Theorem \ref{t5}.

\begin{proposition} \label{p6}Let $0\leq \nu\leq \frac{1}{2},$ $0<\alpha_2\leq 1$, $a=\frac{1}{2\alpha_2}+C_2\varepsilon$, $P_1=\log^a X$, $X^{1+o(1)}\gg T\geq T_0=X^{0.01}$, and $w\leq P_2= X^{\nu+o(1)}$ with $w=\exp\left(\frac{\log X}{(\log \log X)^3}\right)$. Also let
\begin{align*}
G(s)=\sum_{\substack{p_1p_2p_3n\sim X\\P_i\leq p_i\leq P_i^{1+\varepsilon},\,i\leq 2\\p_2<p_3\\(n,\mathcal{P}(p_2))=1\\n>1}}a_n(p_1p_2p_3n)^{-s},
\end{align*}
where $|a_n|\ll (\log X)^{\varepsilon}$. Suppose that for every Dirichlet polynomial $M(s)=\sum_{m\sim M}\frac{b_m}{m^s}$ with $|b_m|\leq d_r(m)$ for fixed $r$ and $M=X^{\nu+o(1)}$ any well-spaced set
\begin{align*}
\mathcal{U}'\subset\{t\in [0,T]:|M(1+it)|\geq  M^{-\alpha_2}\}
\end{align*}
satisfies $|\mathcal{U}'|\ll X^{\frac{1}{2}-\nu+\min\{2\sigma(\nu),\frac{\nu}{2}\}-\varepsilon}$.
Then we have
\begin{align*}
\int_{T_0}^{T}|G(1+it)|^2 dt\ll \left(\frac{TP_1\log X}{X}+1\right)\frac{1}{\log^{2+\varepsilon} X}.
\end{align*}
\end{proposition}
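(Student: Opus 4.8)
The plan is to estimate the mean square of $G(s)$ by first separating the variable $p_1$ (from the short interval $[P_1,P_1^{1+\varepsilon}]$) and then splitting the integration range $[T_0,T]$ according to the size of the Dirichlet polynomial over $p_1$, exactly as in the proof of Proposition \ref{p4}. Write $P(s)=\sum_{p\sim P_1}\frac{b_p}{p^s}$ for the polynomial corresponding to $p_1$ (after dividing $[P_1,P_1^{1+\varepsilon}]$ into short intervals via Lemma \ref{6}), and set
\begin{align*}
\mathcal{T}_1=\{t\in[T_0,T]:|P(1+it)|\leq P_1^{-\alpha_1}\},\qquad \mathcal{T}=[T_0,T]\setminus\mathcal{T}_1,
\end{align*}
with $\alpha_1=10\varepsilon$. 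On $\mathcal{T}_1$, Lemma \ref{6} together with Proposition \ref{p1} and the sieve bounds of Lemma \ref{15} (applied with the remark allowing the set $\mathcal{Q}'$, since $n$ is coprime to $\mathcal{P}(p_2)$) give a saving of $P_1^{10\varepsilon-2\alpha_1}=P_1^{-10\varepsilon}$ over the trivial bound, which is an admissible contribution because $a>1$ so $P_1=\log^aX$ is a large enough power of $\log X$; the error terms $S_1,S_2$ from Lemma \ref{6} contribute $\ll(\frac{TP_1\log X}{X}+1)\frac{1}{H\log^2X}$ just as in Proposition \ref{p4}.

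The substance is the estimate on $\mathcal{T}$, i.e. where $|P(1+it)|\geq P_1^{-\alpha_1}$. Here I would apply Heath-Brown's decomposition (Lemma \ref{11}) to the polynomial corresponding to $p_2$ (which has length $P_2=X^{\nu+o(1)}\gg w\gg T^\delta$ for suitable $\delta$), writing $P_2(s)$ as $(\log X)^{O(1)}$ pieces of the form $|N(s)|$ with $N$ a zeta sum of length $P_2^{1-o(1)}$, or $|M_1(s)M_2(s)|$ with $M_i$ prime-factored and $M_1M_2=P_2^{1-o(1)}$. The zeta-sum contribution is handled with Watt's theorem (Lemma \ref{16}): combining the zeta sum $N(s)$ of length $X^{\nu-o(1)}\gg X^\varepsilon$ with the polynomial in $p_3,n$ (which is long, of length $X^{1-\nu+o(1)}$, and after pairing with $P(s)^\ell$ with $\ell=\lfloor\varepsilon\frac{\log X}{\log P_1}\rfloor$ using $|P(1+it)P_1^{\alpha_1}|^{2\ell}\geq1$) via Cauchy--Schwarz and a fourth-moment bound, exactly as in the zeta-sum part of Proposition \ref{p3} and in Proposition \ref{p5}; here the coefficient bound $|a_n|\ll(\log X)^\varepsilon$ costs only $X^{o(1)}$. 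For the prime-factored pieces $M_1(s)M_2(s)$, I would pass to a well-spaced set $\mathcal{U}'\subset\mathcal{T}$, use the Halász--Montgomery inequality (Lemma \ref{18}) together with the prime-factored property of one of $M_1,M_2$ (whichever has length in the usable range $[\exp(\frac{\log X}{\log\log X}),X^{\text{something}}]$) to gain $(\log X)^{-A}$, and then invoke the hypothesis of the proposition: any well-spaced $\mathcal{U}'\subset\{|M(1+it)|\geq M^{-\alpha_2}\}$ with $M=X^{\nu+o(1)}$ has $|\mathcal{U}'|\ll X^{\frac12-\nu+\min\{2\sigma(\nu),\nu/2\}-\varepsilon}$. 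This is precisely what is needed for the Halász--Montgomery term $1+\frac{|\mathcal{U}'|T^{1/2}}{(X/P_2^{1/2})}$ (or the analogous expression with $P_2$ in place of $P_2^{1/2}$, depending on which $M_i$ is used) to be $\ll(\log X)^{O(1)}$; one must also bring in Proposition \ref{p8} to cover the range of $\nu$ where the zeta-sum exponent pair $\sigma(\nu)$ is active, splitting the integral over dyadic $[T_1,2T_1]$ and writing $N=T_1^{\mu+o(1)}$ as there.

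The main obstacle I anticipate is bookkeeping the case analysis on the relative sizes of $M_1,M_2$ (and of the zeta sum $N$) against the ranges in which Lemma \ref{11} and the exponent-pair bound of Subsection \ref{subsec:Exp} are applicable, and verifying that in every case the products of lengths satisfy the hypotheses $PQ^2\leq X^{1/4}$, $M^2P\ll X^{1+o(1)}$ of Proposition \ref{p5} (or their analogues), so that Watt's theorem yields the required saving. In other words, the difficulty is not any single estimate but confirming that the definition $a=\frac{1}{2\alpha_2}+C_2\varepsilon$ together with the assumed bound on $|\mathcal{U}'|$ is exactly strong enough, uniformly in $\nu\in(0,\frac12]$, to push every term below $(\frac{TP_1\log X}{X}+1)\frac{1}{\log^{2+\varepsilon}X}$; a secondary point is that the coefficient $a_n$, being not perfectly multiplicative but only bounded by $(\log X)^\varepsilon$ and supported on integers coprime to $\mathcal{P}(p_2)$, must be shown to behave like an almost-prime indicator for the purposes of Lemma \ref{15}, which is where the remark after Lemma \ref{15} on the sets $\mathcal{Q}',\mathcal{Q}''$ is used. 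Summing the $\mathcal{T}_1$ and $\mathcal{T}$ contributions then gives the claimed bound and completes the proof.
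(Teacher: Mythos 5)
Your two-way split of $[T_0,T]$ into $\mathcal{T}_1$ (where the $p_1$-polynomial is small) and $\mathcal{T}$ (its complement) is missing a crucial third case that the paper handles. The hypothesis of the proposition only bounds $|\mathcal{U}'|$ for well-spaced sets contained in $\{t:|M(1+it)|\geq M^{-\alpha_2}\}$ where $M$ has length $X^{\nu+o(1)}$ --- that is, where the $p_2$-polynomial (or, in your Heath--Brown version, the product $M_1M_2$ of length $P_2^{1-o(1)}$) is \emph{large}. But $\mathcal{T}$ as you define it is cut out by the condition $|P(1+it)|\geq P_1^{-\alpha_1}$ on the $p_1$-polynomial; there is no reason why a well-spaced subset of $\mathcal{T}$ should lie inside the large-values set of the $p_2$-polynomial. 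You therefore cannot "invoke the hypothesis of the proposition" on $\mathcal{U}'\subset\mathcal{T}$ as you propose. The paper instead partitions $[T_0,T]=\mathcal{T}_1\cup\mathcal{T}_2\cup\mathcal{T}$, with $\mathcal{T}_2$ the set (outside $\mathcal{T}_1$) where the $p_2$-polynomial satisfies $|Q_{v_2,H,2}(1+it)|\leq P_2^{-\alpha_2}$ and $\mathcal{T}$ the residual set where \emph{both} short polynomials are large; only on the latter does the $\mathcal{U}'$ hypothesis apply, and it is invoked there through Proposition \ref{p8} (used as a black box rather than re-derived).

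The $\mathcal{T}_2$ case is not cosmetic: that is precisely where the condition $a=\frac{1}{2\alpha_2}+C_2\varepsilon$ enters, via Proposition \ref{p2} with $P(s)$ the $p_1$-polynomial and $M(s)$ the $p_2$-polynomial, $\ell=\lceil\frac{\log P_2}{\log P_1}\rceil$, yielding $B_2\ll P_2^{2(\alpha_1-\alpha_2)+O(\varepsilon)+(1+2\varepsilon)/a}\ll(\log X)^{-100}$ only when $a\geq\frac{1}{2(\alpha_2-\alpha_1)}+O(\varepsilon)$. Your writeup nowhere uses Proposition \ref{p2}, never produces an $\ell$ depending on $\frac{\log P_2}{\log P_1}$, and never invokes the arithmetic relation between $a$ and $\alpha_2$ beyond a vague "$a>1$," so the stated value $a=\frac{1}{2\alpha_2}+C_2\varepsilon$ does no work in your argument. (A secondary issue: you assert the zeta sum has length $X^{\nu-o(1)}\gg X^\varepsilon$, but the proposition allows $\nu$ down to $\approx(\log\log X)^{-3}$ since only $P_2\geq w$ is required; this is why the paper routes the zeta-sum contribution through Proposition \ref{p8}, which handles short zeta sums via dyadic $T_1$-intervals and the exponent pair bound, rather than reusing the $N_1\geq X^{1/2-o(1)}$ argument of Proposition \ref{p3}.) To repair the proof you need the three-way split and the Proposition \ref{p2} bound on $\mathcal{T}_2$; after that the remaining $\mathcal{T}$ contribution is exactly Proposition \ref{p8} plus the $\mathcal{U}'$ hypothesis, which is essentially what your final paragraph gestures at.
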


\begin{proof} Let $\alpha_1=100\varepsilon$ and define $H=\log^{10\varepsilon}X$. Let
\begin{align*}
Q_{v,H,1}(s)=\sum_{e^{\frac{v}{H}}\leq p_1<e^{\frac{v+1}{H}}}p_1^{-s},\quad Q_{v,H,2}(s)=\sum_{e^{\frac{v}{H}}\leq p_2<e^{\frac{v+1}{H}}}p_2^{-s}
\end{align*}
and 
\begin{align*}
G_{v,H,1}(s)&=\sum_{\substack{p_2p_3p_4m\sim Xe^{-\frac{v}{H}}\\P_{2}\leq p_{2}\leq P_{2}^{1+\varepsilon}\\p_2<p_3,\,p_2\leq p_4\\(m,\mathcal{P}(p_4))=1}}a_{p_4m}(p_2p_3p_4m)^{-s},\\ G_{v,H,2}(s)&=\sum_{\substack{p_1p_3p_4m\sim Xe^{-\frac{v}{H}}\\P_{1}\leq p_1\leq P_1^{1+\varepsilon}\\(m,\mathcal{P}(p_4))=1}}a_{p_4m}(p_1p_3p_4m)^{-s}.
\end{align*}

For $j=1,2$, we have
\begin{align*}
\int_{\mathcal{S}}|G(1+it)|^2 dt&\ll \left(\frac{TP_1\log X}{X}+1\right)\frac{1}{\log^{2+\varepsilon} X}\\
&+H^2(\log^2 P_j)(\log^{10(j-1)}X)\int_{\mathcal{S}}|Q_{v_j,H,j}(1+it)G_{v_j,H,j}(1+it)|^2 dt
\end{align*}
for some $v_j\in [H\log P_j,(1+\varepsilon)H\log P_j]$ and any measurable $\mathcal{S}\subset [T_0,T]$. In the case $j=1$, this follows from Lemmas \ref{6} and \ref{15}, while in the case $j=2$, we use Perron's formula to separate the variables in $G(s)$. We partition $[T_0,T]$ as $\mathcal{T}_1\cup \mathcal{T}_2\cup \mathcal{T}$ with 
\begin{align*}
\mathcal{T}_1&=\{t\in [T_0,T]:|Q_{v_1,H,1}(1+it)|\leq P_1^{-\alpha_1}\},\\
\mathcal{T}_2&=\{t\in [T_0,T]:|Q_{v_2,H,2}(1+it)|\leq P_2^{-\alpha_2}\}\setminus \mathcal{T}_1,
\end{align*}
and $\mathcal{T}=[T_0,T]\setminus (\mathcal{T}_1\cup \mathcal{T}_2)$.\\

What remains to be done is estimating the integrals
\begin{align*}
B_j=H^2(\log^2 P_j)(\log^{10(j-1)}X)\int_{\mathcal{T}_j}|Q_{v_j,H,j}(1+it)G_{v_j,H,j}(1+it)|^2 dt
\end{align*}
for $j=1,2$, as well as
\begin{align*}
B=H^2(\log^{10} X)\int_{\mathcal{T}}|Q_{v_2,H,2}(1+it)G_{v_2,H,2}(1+it)|^2 dt.
\end{align*}
We have $B_1\ll \left(\frac{TP_1\log X}{X}+1\right)\frac{P_1^{10\varepsilon-\alpha_1}}{\log^2 X}$ by Proposition \ref{p1} and Lemma \ref{15}, and this is small enough since $\alpha_1= 100\varepsilon$. We also have, by Proposition \ref{p2} with $\ell=\lceil \frac{\log P_2}{\log P_1}\rceil$,
\begin{align*}
B_2&\ll H^2(\log^{20} X)P_2^{-2\alpha_2}P_1^{(2+10\varepsilon)\alpha_1\ell}\ell^{(1+o(1))\ell}\\
&\ll P_2^{2(\alpha_1-\alpha_2)+20\varepsilon+\frac{1+2\varepsilon}{a}}\\
&\ll P_2^{-\varepsilon}\ll (\log X)^{-100},
\end{align*}
as long as $a\geq \frac{1}{2(\alpha_2-\alpha_1)}+\frac{C_2}{2}\varepsilon$, say. Lastly, Proposition \ref{p8} gives, for some well-spaced $\mathcal{U}'$ of the type mentioned in the proposition,
\begin{align*}
B\ll (\log X)^{-50}+X^{\frac{1}{2}-\min\{2\sigma(\nu),\frac{\nu}{2}\}+o(1)}\frac{|\mathcal{U}'|X^{\nu+o(1)}}{X}\ll (\log X)^{-50}
\end{align*}
by our assumption on $\mathcal{U}'$. Now the proof is complete.\end{proof}

\subsection{Cases of $\Sigma_1(h)$ and $\Sigma_2(h)$}\label{subsec:Sigma1}

Let $\lambda_d^{+}$ and $\lambda_d^{-}$ be the sieve weights of Brun's pure sieve with $R=2\lfloor (\log \log X)^{\frac{3}{2}}\rfloor$ and sieving parameter $w=\exp(\frac{\log X}{(\log \log X)^3})$. We have
\begin{align*}
\sum_{x\leq p_1dn\leq x+h\atop P_1\leq p_1\leq P_1^{1+\varepsilon} }\lambda_d^{-}\leq \sum_{\substack{\frac{x}{p_1}\leq n\leq \frac{x+h}{p_1}\\P_1\leq p_1\leq P_1^{1+\varepsilon}\\(n,\mathcal{P}(w))=1\\n>1}}1=\Sigma_1(h)\leq  \sum_{x\leq p_1dn\leq x+h\atop P_1\leq p_1\leq P_1^{1+\varepsilon}}\lambda_d^{+}.
\end{align*}
We consider the lower bound; the upper bound can be considered similarly.
Letting $X_1=\frac{X}{T_0^3}$ with $T_0=X^{0.01}$, we have 
\begin{align*}
\frac{h}{X_1}\sum_{P_1\leq p_1\leq P_1^{1+\varepsilon}\atop d\mid \mathcal{P}(w)}\lambda_d^{-}\sum_{\frac{X}{p_1d}\leq n\leq \frac{X+X_1}{p_1 d}}1=h\sum_{P_1\leq p_1\leq P_1^{1+\varepsilon}\atop d\mid \mathcal{P}(w)}\frac{\lambda_d^{-}}{p_1d}+O\left(\frac{h}{X_1}w^{R}P_1^{1+\varepsilon}\right),
\end{align*}
so
\begin{align}\label{eq40}
\Sigma_1(h)&\geq \sum_{d\mid \mathcal{P}(w)\atop P_1\leq p_1\leq P_1^{1+\varepsilon}}\lambda_d^{-}\frac{h}{p_1d}+\left(\sum_{x\leq p_1dn\leq x+h\atop P_1\leq p_1\leq P_1^{1+\varepsilon}}\lambda_d^{-}-\frac{h}{X_1}\sum_{X\leq p_1dn\leq X+X_1\atop P_1\leq p_1\leq P_1^{1+\varepsilon}}\lambda_d^{-}\right)\\
&\quad+O\left(\frac{1}{\log^{100} X}\right).\nonumber
\end{align}
By the fundamental lemma of the sieve (see e.g. \cite[Chapter 6]{friedlander}), we further deduce that
\begin{align*}
\sum_{d\mid \mathcal{P}(w)\atop P_1\leq p_1\leq P_1^{1+\varepsilon}}\lambda_d^{-}\frac{h}{p_1d}&=(1+O((\log X)^{-100}))\sum_{d\mid \mathcal{P}(w)\atop P_1\leq p_1\leq P_1^{1+\varepsilon}}\lambda_d^{+}\frac{h}{p_1d}\\
&\geq \frac{h}{X}\Sigma_1(X)+O\left(\frac{h}{\log^{100} X}\right).
\end{align*}
Therefore, we may concentrate on the expression in the parentheses in \eqref{eq40}, which is a difference between a short and long average.  By Lemma \ref{1}, it is $o\left(\frac{h}{\log X}\right)$ for $h\geq P_1\log X$ and for almost all $x\leq X$, provided that
\begin{align*}
\int_{T_0}^{T}|F(1+it)|^2dt=o\left(\left(\frac{TP_1\log X}{X}+1\right)\frac{1}{\log^2 X}\right)
\end{align*}
for all $T\geq T_0$, where $T_0=X^{0.01}$, and
\begin{align*}
F(s)=\sum_{p_1dn\sim X\atop P_1\leq p_1\leq P_1^{1+\varepsilon}}\lambda_d^{-}(p_1dn)^{-s}.
\end{align*}

Such an estimate is given by the following proposition, which is invoked again in the case of the sum $\Sigma_2(h)$.

\begin{proposition}\label{p7}Let  $\varepsilon>0$, $P_1=\log^{a} X$ with $a\geq 2+C_3\varepsilon$ and
\begin{align*}
F(s)=\sum_{p_1dn\sim X\atop P_1\leq p_1\leq P_1^{1+\varepsilon}}\lambda_d^{\pm}(p_1dn)^{-s}\quad \text{or}\quad F(s)=\sum_{\substack{p_1pdn\sim X\\ P_1\leq p_1\leq P_1^{1+\varepsilon}\\M\leq p\leq M^{1+\varepsilon}}}\lambda_d^{\pm}(p_1pdn)^{-s}
\end{align*}
with $M\ll X^{\frac{1}{2}+o(1)}$ , $X^{1+o(1)}\gg T\geq T_0=X^{0.01}$ as before, and either $+$ or $-$ sign chosen throughout. Then,
\begin{align*}
\int_{T_0}^{T}|F(1+it)|^2 dt\ll \left(\frac{TP_1\log X}{X}+1\right)\frac{1}{\log^{2+\varepsilon} X}.
\end{align*}
\end{proposition}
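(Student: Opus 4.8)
\emph{Sketch of proof.} The point of inserting the sieve weights $\lambda_d^{\pm}$ is that the rough cofactor in $\Sigma_1(h),\Sigma_2(h)$ becomes a genuinely free variable: writing an integer counted by $F$ as $p_1\,d\,n$ (resp.\ $p_1\,p\,d\,n$), the factor $n$ ranges over \emph{all} integers in a long interval, so $F$ carries a zeta sum of length $X^{1-o(1)}$ (resp.\ of length $\ge X^{1/2-o(1)}$, using $M\ll X^{1/2+o(1)}$). The plan is to exploit this zeta sum through Watt's theorem (Lemma \ref{16}) and through the convexity estimate $\bigl|\sum_{m\sim N}m^{-1-it}\bigr|\ll|t|^{-1}$, valid for $N\ge|t|$, while all mean value inputs come from Lemma \ref{5}: its coefficients are precisely the convolutions $\lambda_d^{\pm}*1$ (possibly twisted by one extra prime) that occur here, and \eqref{eq6}, \eqref{eq45} are exactly what is needed to gain the extra logarithm.

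First I would peel off the short prime $p_1\in[P_1,P_1^{1+\varepsilon}]$ using Lemma \ref{6}, taking $H$ to be a small fixed power of $\log X$; since $\log(M'/M)=\varepsilon\log P_1$ here, the prefactor $|I|^2$ is only $\log^{O(\varepsilon)}X$, and the diagonal and off‑diagonal terms produced by Lemma \ref{6} are controlled by the quantities $S_1,S_2$ of Subsection \ref{subsec:sieve} attached to the complementary polynomial, which by Lemma \ref{5} are $\ll(\log\log X)^{O(1)}\log^{-1}X$ and $\ll(\log\log X)^{O(1)}\tfrac{X}{T\log^2X}$, hence $\ll\bigl(\tfrac{TP_1\log X}{X}+1\bigr)\log^{-2-\varepsilon}X$. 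This reduces matters to bounding $\int_{T_0}^{T}|A(1+it)B(1+it)|^2\,dt$, where $A$ is the polynomial over $p_1$ (length $\asymp P_1=\log^aX$) and $B$ the complementary one, of length $\asymp X/P_1$, whose coefficients are $\lambda_d^{\pm}*1$ (twisted by $p$ in the second case). Now split $[T_0,T]=\mathcal T_1\cup\mathcal T$ with $\mathcal T_1=\{t\in[T_0,T]:|A(1+it)|\le P_1^{-\alpha_1}\}$, $\alpha_1$ a small fixed multiple of $\varepsilon$.

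On $\mathcal T_1$ I would apply Proposition \ref{p1} with $K=B$, $P=A$: together with the bounds for $S_1(X/P_1,\cdot),S_2(X/P_1,\cdot)$ coming again from Lemma \ref{5} and Subsection \ref{subsec:sieve}, this gives $\int_{\mathcal T_1}|AB|^2\,dt\ll\tfrac{T}{X}P_1^{1-2\alpha_1}(S_1+S_2)\ll(\log\log X)^{O(1)}\bigl(\tfrac{TP_1^{1-2\alpha_1}}{X\log X}+\tfrac{P_1^{-2\alpha_1}}{\log^2X}\bigr)$, which is admissible against $\bigl(\tfrac{TP_1\log X}{X}+1\bigr)\log^{-2-\varepsilon}X$ (and against the $|I|^2$ and $(\log\log X)^{O(1)}$ losses) as soon as $2a\alpha_1$ exceeds a suitable multiple of $\varepsilon$. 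On $\mathcal T$ one has $|A(1+it)|>P_1^{-\alpha_1}$, and here I would bring in the zeta sum. Since the sieve divisor $d\mid\mathcal P(w)$ has at most $R=2\lfloor(\log\log X)^{3/2}\rfloor$ prime factors, all below $w=\exp(\log X/(\log\log X)^3)$, it ranges over a set of multiplicative width $\ll\log X/(\log\log X)^{3/2}$, which is harmless in this regime. For the ranges of $t$ below the length of the zeta sum, the convexity estimate gives a pointwise bound $\ll|t|^{-1}\le X^{-0.01}$ for the zeta sum, and combining this with the mean value theorem for the short factors (summed dyadically with the weight $t^{-2}$) yields a power saving $\ll X^{-\delta}$. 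For the remaining $t$, which necessarily exceed the length of the zeta sum, I would invoke Watt's theorem (Lemma \ref{16}) in the style of the proof of Proposition \ref{p5}: a Cauchy--Schwarz step puts the full square of the zeta sum on one side, so the left factor becomes a twisted fourth moment of a zeta sum of length $\ge X^{1/2-o(1)}$ and the right factor is handled by the mean value theorem (the condition $M\ll X^{1/2+o(1)}$ in the second case being exactly what keeps that right factor of length $\ll X^{1+o(1)}$, cf.\ the hypotheses of Proposition \ref{p5}); the measure of the exceptional $t$-set, where the polynomial over the auxiliary prime is large, is controlled by large‑values arguments exactly as in Propositions \ref{p3}, \ref{p8}, \ref{p6}, and this is the step that forces $a\ge2+C_3\varepsilon$. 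Either way the contribution of $\mathcal T$ is far below the target; and once $T\gg X/\log^{a-\varepsilon}X$ the trivial mean value bound $\ll\log^{-1}X$ for $\int_{T_0}^T|F(1+it)|^2\,dt$ (Lemma \ref{12}) is already enough. Adding the three contributions, and treating the two forms of $F$ the same way, gives the proposition.

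The main obstacle is that with only two prime‑like variables — $p_1$ and the rough cofactor — no factorization of $F$ exhibits any cancellation, so the entire gain must be manufactured from the free variable created by the sieve weights, and that variable is useful only through Watt's theorem and the convexity bound, both of which insist that the accompanying zeta sum be genuinely long. The technical heart is therefore to carry this out while losing at most $\log^{O(\varepsilon)}X$: the removal of $p_1$ must be done with Lemma \ref{6} (whose error term carries the factor $T$ rather than $X$), the mean value estimates must use the improved, convolution‑adapted Lemma \ref{5} rather than crude bounds (in particular one cannot afford to replace $\sum_{d}|\mu(d)|$ by the number of admissible sieve divisors, which is $w^{R}$-sized), and the balance between the available gain $P_1^{2\alpha_1}$ and the permissible size of $\alpha_1$ — limited by the large‑values control of the auxiliary prime in the $\mathcal T$ analysis — is what pins down the hypothesis $a\ge2+C_3\varepsilon$.
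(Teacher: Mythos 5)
Your outline matches the paper's proof up to and including the treatment of $\mathcal T_1$: peel off $p_1$ via Lemma \ref{6} with $H$ a small power of $\log X$, control the error terms from Lemma \ref{6} by Lemma \ref{5}, and on $\mathcal T_1=\{t:|P_{v_0,H}(1+it)|\le P_1^{-\alpha_1}\}$ apply Proposition \ref{p1} together with Lemma \ref{5} to obtain an admissible bound once $\alpha_1 a$ is a suitable multiple of $\varepsilon$. You also correctly identify that on the complement of $\mathcal T_1$ one must exploit the long zeta sum through Watt's theorem (Proposition \ref{p5}), and that the constraint $M\ll X^{1/2+o(1)}$ is there precisely to keep the non-zeta factor of length $\ll X^{1+o(1)}$.

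However, your account of what \emph{forces} $a\ge 2+C_3\varepsilon$ is wrong, and the step you invent to explain it does not occur in the paper. You write that the measure of the exceptional set on $\mathcal T$ ``is controlled by large-values arguments exactly as in Propositions \ref{p3}, \ref{p8}, \ref{p6}, and this is the step that forces $a\ge 2+C_3\varepsilon$.'' The paper's proof of Proposition \ref{p7} never bounds $|\mathcal T|$ and never invokes Lemma \ref{7}, Lemma \ref{13}, or the Hal\'asz--Montgomery inequality. Instead, on all of $[T_0,T]\setminus\mathcal T_1$ it multiplies the integrand by $\bigl|P_1^{\alpha_1}P_{v_0,H}(1+it)\bigr|^{2\ell}\ge 1$ with $\ell=\lfloor\varepsilon\log X/\log P_1\rfloor$, separates variables by Perron's formula, and feeds the full integral into Proposition \ref{p5} with $Q(s)=P_{v_0,H}(s)^{\ell}$, $P(s)\equiv 1$, $M(s)=M_{v_1,H}(s)$ and $N(s)$ the zeta sum. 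Proposition \ref{p5} returns the bound $X^{o(1)}(Q^{-1}+T_0^{-1})\max_{m\sim Q}|a_m|^2$ with no residual exceptional set; the gain is the factor $Q^{-1}=P_1^{-\ell(1+o(1))}$, the loss is $\max|a_m|^2\le(\ell!)^2=(\log^2 X)^{(1+o(1))\ell}$ coming from the divisor-type coefficients of $P_{v_0,H}(s)^{\ell}$, plus the further loss $P_1^{2\alpha_1\ell}$. Collecting these gives $(P_1^{2\alpha_1-1}\log^2 X)^{(1+o(1))\ell}$, which tends to zero exactly when $P_1^{1-2\alpha_1}>\log^2 X$, i.e.\ $a(1-200\varepsilon)>2$; this is the true source of the hypothesis $a\ge 2+C_3\varepsilon$.

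The distinction matters: a literal attempt to carry out your proposed step (bound $|\mathcal T|$ by Lemma \ref{7} and then control the remaining integral by Hal\'asz--Montgomery) fails, because the diagonal term $N\cdot\sum_n|a_n|^2$ in Lemma \ref{18} is already of size $\asymp (\log\log X)^{O(1)}$ for these coefficients---far above the target $\log^{-2-\varepsilon}X$. Only Watt's twisted fourth-moment estimate, by exploiting the zeta sum multiplicatively rather than pointwise, gets below this barrier. So the ``large-values control of the auxiliary prime'' you cite is not merely an inessential detour; it is a step that would defeat the proof. The correct mechanism is the $Q^{-1}$ versus $(\ell!)^2$ balance inside Proposition \ref{p5}, not a count of exceptional $t$.
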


\begin{proof} Let $D$ be a large constant, and for positive integer $v$ and $H=\log^{10\varepsilon} X$ denote
\begin{align*}
P_{v,H}(s)=\sum_{e^{\frac{v}{H}}\leq p<e^{\frac{v+1}{H}}}p^{-s}
\end{align*}
and
\begin{align*}
F_{v,H}(s)=\sum_{dn\sim Xe^{-\frac{v}{H}}}\lambda_{d}^{\pm}(dn)^{-s}\quad \text{or}\quad F_{v,H}(s)=\sum_{pdn\sim Xe^{-\frac{v}{H}}\atop M\leq p\leq M^{1+\varepsilon}}\lambda_{d}^{\pm}(pdn)^{-s}.
\end{align*}
Lemma \ref{6} gives
\begin{align}\label{eq11}
\int_{T_0}^{T}|F(1+it)|^2dt&\ll H^2(\log \log X)^2\int_{T_0}^{T}|P_{v_0,H}(1+it)F_{v_0,H}(1+it)|^2dt\nonumber\\
&+T\sum_{n\in [Xe^{-\frac{1}{H}},Xe^{\frac{1}{H}}]\, or\atop n\in [2X,2Xe^{\frac{1}{H}}]}|a_n|^2+T\sum_{1\leq h\leq \frac{X}{T}}\sum_{\substack{m-n=h\\ m,n\in [Xe^{-\frac{1}{H}},Xe^{\frac{1}{H}}]\,or\\ m,n\in [2X,2Xe^{\frac{1}{H}}]}}|a_m||a_n|,
\end{align}
for some $v_0\in I_0$, where $I_0=[H\log P_1,H\log P_1^{1+\varepsilon}]$ and 
\begin{align}\label{eq2}
a_m=\sum_{p_1\mid m\atop P_1\leq p_1\leq P_1^{1+\varepsilon}}\left|\sum_{m=p_1dn}\lambda_d^{\pm}\right|\quad \text{or}\quad a_m=\sum_{p_1\mid m\atop P_1\leq p_1\leq P_1^{1+\varepsilon}}\left|\sum_{m=p_1pdn\atop M\leq p\leq M^{1+\varepsilon}}\lambda_d^{\pm}\right|
\end{align}

Lemma \ref{5} tells that the last two terms in \eqref{eq11} contribute, for some constant $C>0$, 
\begin{align*}
&\ll \frac{T}{X}\left(\frac{(\log \log X)^{C}}{H}\cdot \frac{1}{\log X}+\frac{(\log \log X)^C}{H}\cdot \frac{X}{T}\cdot \frac{1}{\log^2 X}\right)\nonumber\\
&\ll \left(\frac{TP_1\log X}{X}+1\right)\cdot\frac{1}{\log^{2+\varepsilon} X}
\end{align*}
by the definition of $H$. We are now left with estimating the integral in \eqref{eq11}. We consider the integrals in two parts, namely the part over $\mathcal{T}_1$ and its complement, with 
\begin{align*}
\mathcal{T}_1=\{t\in [T_0,T]:|P_{v_0,H}(1+it)|\leq P_1^{-100\varepsilon}\}.
\end{align*}

The case of $\mathcal{T}_1$ is dealt with Proposition \ref{p1} and Lemma \ref{5}, and it contributes 
\begin{align*}
&\ll H^2(\log \log X)^2\frac{T}{X}P_1^{1-200\varepsilon}\left(S_1\left(\frac{X}{P_1},(a_n)\right)+S_2\left(\frac{X}{P_1},(a_n)\right)\right)\\
&\ll (\log \log X)^C\left(\frac{T}{X}\cdot \frac{1}{\log X}+\frac{1}{P_1}\cdot \frac{1}{\log^2 X}\right)\cdot P_1^{1-100\varepsilon}\\
&\ll \left(\frac{TP_1\log X}{X}+1\right)\cdot \frac{1}{\log^{2+\varepsilon} X},
\end{align*}
where the coefficients $a_n$ involved in definition of $S_i(X,(a_n))$ are given by \eqref{eq2}.\\

We turn to the integral over the complement of $\mathcal{T}_1$ and resort to the Watt-type Proposition \ref{p5}. Let $\ell$ be a large positive integer such that $P_1^{\ell}= X^{\varepsilon+o(1)}$. Letting $N_a(s)=\sum_{n\sim Xe^{-a}}n^{-s}$ and
\begin{align*}
M_{v,H}(s)=\sum_{\substack{e^{\frac{v}{H}}\leq p_1d<e^{\frac{v+1}{H}}\\P_1\leq p_1\leq P_1^{1+\varepsilon}}}\lambda_{d}^{\pm}(p_1d)^{-s}\quad \text{or}\quad M_{v,H}(s)=\sum_{\substack{e^{\frac{v}{H}}\leq p_1pd<e^{\frac{v+1}{H}}\\P_1\leq p_1\leq P_1^{1+\varepsilon}\\M\leq p\leq M^{1+\varepsilon}}}\lambda_{d}^{\pm}(p_1pd)^{-s},
\end{align*}
an application of Perron's formula to separate variables, along with Lemma \ref{5} and $|P_{v_1,H}(1+it)P_1^{100\varepsilon}|^{2\ell}\geq 1$, yields
\begin{align}\label{eq9}
&\int_{[T_0,T]\setminus \mathcal{T}_1}|F(1+it)|^2 dt\nonumber\\
&\ll H^2(\log^{10} X)P_1^{200\varepsilon \ell} \int_{T_0}^{T}|P_{v_0,H}(1+it)^{\ell}M_{v_1,H}(1+it)N_{\frac{v_1}{H}}(1+it)|^2 dt\\
&+\left(\frac{TP_1\log X}{X}+1\right)\cdot\frac{1}{\log^{2+\varepsilon} X}\nonumber
\end{align}
for some $v_1\in I_1,$ where $I_1=[H\log M,H\log (M^{1+\varepsilon}w^R)].$ Now Proposition \ref{p5} with $N(s)=N_{\frac{v_1}{H}}(s),$ $M(s)=M_{v_1,H}(s),$ $P(s)\equiv 1,Q(s)=P_{v_0,H}(s)^{\ell}$ and $\ell=\lfloor \frac{\varepsilon \log X}{\log P_1}\rfloor$ bounds \eqref{eq9} with
\begin{align}\label{eq41}
X^{o(1)}P_1^{200\varepsilon \ell}\left(Q^{-1}+\frac{1}{T_0}\right)(\ell!)^2\ll (P_1^{-1}(\log^2 X))^{(1+o(1))\ell}+X^{-\varepsilon}\ll X^{-\varepsilon^2}
\end{align}
for $a\geq 2+C_3\varepsilon$, since the condition $M^2P\ll X^{1+o(1)}$ certainly holds. \end{proof}

Note that Proposition \ref{p7} immediately shows that 
\begin{align*}
\frac{1}{h}\Sigma_1(h)-\frac{1}{X_1}\Sigma_1(X_1)\geq o\left(\frac{1}{\log X}\right)
\end{align*}
for almost all $x\leq X$, where $X_1=\frac{X}{T_0^3}$. Taking into account formula \eqref{eq40} and repeating the above argument with lower bound sieve weights replaced with upper bound sieve weights, we see that the reverse inequality holds,
so $\frac{1}{h}\Sigma_1(h)$ can be replaced with its dyadic counterpart $\frac{1}{X}\Sigma_1(X)$ almost always.\\

 Now we deal with $\Sigma_2(h)$. We use the same strategy, so that for example for the lower bound we start with
\begin{align*}
\Sigma_2 \geq \sum_{\substack{x\leq p_1pdn\leq x+h\\P_1\leq p_1\leq P_1^{1+\varepsilon}}}\lambda_d^{-},
\end{align*}
an inequality that is valid even when the interval $\left[\frac{x}{p_1p},\frac{x+h}{p_1p}\right]$ contains no integers. This leads us to study the Dirichlet polynomial
\begin{align*}
F^{*}(s)=\sum_{\substack{p_1pdn\sim X\\P_1\leq p_1\leq P_1^{1+\varepsilon}\\w\leq p<\sqrt{x}}}\lambda_d^{-}(p_1pdn)^{-s},
\end{align*}
where the variable $p$ can be divided into $\ll \log \log X$ intervals of the form $[M,M^{1+\varepsilon}]$ with $M\ll X^{\frac{1}{2}+o(1)}$ (the value of $\varepsilon$ may be varied so that the division becomes exact). For each of these Dirichlet polynomials where $p$ is restricted, Proposition \ref{p7} gives a bound of $\left(\frac{TP_1\log X}{X}+1\right)(\log X)^{-2-\varepsilon}$ for their second moment. Now by the same argument as for $\Sigma_1(h)$, we infer that $\frac{1}{h}\Sigma_2(h)$ can also be replaced with its dyadic counterpart $\frac{1}{X}\Sigma_2(X)$ almost always. 

\subsection{Case of $\Sigma_3(h)$}\label{subsec:sigma}

We are left with the sum $\Sigma_3(h)$. This is the case that determines which value of $a$ we obtain (and hence the value of $c$, which is just $a+1$), since so far in all cases $a\geq 2+C_4\varepsilon$ has been a sufficient assumption. We will establish the value $a=2.51$.\\ 

Let $\beta_1,\beta_2,\beta\in (\frac{1}{6},\frac{1}{2})$ be parameters which are given the values
\begin{align*}
\beta_1=0.1680,\quad \beta_2=0.1803,\quad \beta=0.1950
\end{align*}
to optimize various subsequent conditions. We split $\Sigma_3(h)$ into three parts $\Sigma_3^{(1)}(h),\Sigma_3^{(2)}(h)$ and $\Sigma_3^{(3)}(h)$, say, the first sum being a type II sum that can be evaluated asymptotically, the second being a type I sum (after Buchstab's identity) that can mostly be evaluated asymptotically, and the third being a type II sum that can be transformed into Buchstab integrals whose value is suitably small. Explicitly, let
\begin{align*}
\Sigma_3^{(i)}(h)=\sum_{\substack{x\leq p_1q_1q_2n\leq x+h\\P_1\leq p_1\leq P_1^{1+\varepsilon}\\(q_1,q_2)\in A_i\\(n,\mathcal{P}(q_2))=1\\n>1}}1,\quad i=1,2,3
\end{align*}
with
\begin{align*}
A_1=&\{(q_1,q_2):\,\,w\leq q_2<q_1,\,\, \text{one of}\,\, q_1,q_2\in [w,X^{\beta_1}]\cup [X^{\beta_2},X^{\beta}]\},\\
A_2=&\{(q_1,q_2):\,\,w\leq q_2<q_1,\,\, \text{either}\,\, q_1^2q_2^3\leq X\,\,\text{or}\,\, q_1q_2^4\leq X,\,\,q_1\leq X^{\frac{1}{4}-2\varepsilon}\}\setminus A_1\\
A_3=&\{(q_1,q_2):\,\,w\leq q_2<q_1\leq X^{\frac{1}{2}}\}\setminus (A_1\cup A_2).
\end{align*}

The underlying idea is that the small variable in $A_1$ enables efficient use of large values theorems, the conditions in $A_2$ make it possible to apply Watt's theorem (after two applications of Buchstab's identity), and the remaining set $A_3$ can be shown to contribute not too much. We study the sums $\Sigma_3^{(i)}(h)$ separately, starting with $\Sigma_3^{(1)}(h)$.

\subsubsection{Type II sums}

We consider the Type II sum $\Sigma_3^{(1)}(h)$. In order to prove that $\frac{1}{h}\Sigma_3^{(1)}(h)$ is asymptotically $\frac{1}{X}\Sigma_3^{(1)}(X)$ almost always, it suffices to prove that $\frac{1}{h}\Sigma_3^{(1)}(h)$ is asymptotically $\frac{1}{X_1}\Sigma_3^{(1)}(X_1)$ almost always with $X_1=\frac{X}{T_0^3}$, and then apply the prime number theorem in short intervals. For this latter asymptotic equivalence, it suffices to show that the Dirichlet polynomial
\begin{align*}
G(s)=\sum_{\substack{p_1q_1q_2n\sim X\\P_1\leq p_1\leq P_1^{1+\varepsilon}\\Q_i\leq q_i\leq P_i^{1+\varepsilon}, i\leq 2\\q_2<q_1\\(n,\mathcal{P}(q_2))=1\\n>1}}(p_1q_1q_2n)^{-s}
\end{align*}
satisfies
\begin{align*}
\int_{T_0}^T |G(1+it)|^2 dt\ll \left(\frac{TP_1\log X}{X}+1\right)\frac{1}{\log^{2+\varepsilon} X}
\end{align*}
with $T\leq X^{1+o(1)}$, $T_0=X^{0.01}$, $P_1=\log^a X$ and $Q_1,Q_2\geq w$ otherwise arbitrary, but either $Q_1$ or $Q_2$ is of size $X^{\nu+o(1)}$ with $\nu \in [0,\beta_1]\cup [\beta_2,\beta]$. These cases are similar, so assume $Q_2=X^{\nu+o(1)}$ with $\nu$ as above.\\

This is the setting of Proposition \ref{p6}. Therefore, if for every polynomial of the form
\begin{align*}
M(s)=\sum_{m\sim M}\frac{b_m}{m^s},
\end{align*}
with $M=X^{\nu+o(1)}$ and $|b_m|\leq d_r(n)$ for fixed $r$, any well-spaced set
\begin{align*}
\mathcal{U}'\subset \{t\in [0,T]:\,\, |M(1+it)|\geq M^{-\alpha_2}\}
\end{align*}
satisfies
\begin{align*}
|\mathcal{U}'|\ll X^{\frac{1}{2}-\nu+\min\{2\sigma(\nu),\frac{\nu}{2}\}-\varepsilon},
\end{align*}
the sum $\Sigma_3^{(1)}(h)$ has the anticipated asymptotic for $a\geq \frac{1}{2\alpha_2}+C_5\varepsilon$. Of course, we fix $\alpha_2=\frac{1}{2\cdot 2.51}+C_6\varepsilon$.\\

We are left with estimating $|\mathcal{U}'|$, and to this end we utilize Jutila's large values theorem. Jutila's large values theorem (Lemma \ref{13}) applied to the $\ell$th moment of $M(s)$ can be reformulated to say that if
\begin{align*}
\mathcal{R}(\nu,\alpha_2,k,\ell)=\max\left\{2\nu \alpha_2 \ell,\left(6-\frac{2}{k}\right)\nu \alpha_2 \ell+1-2\nu \ell,\,\,1+8k\ell\nu \alpha_2-2k\ell \nu\right\}
\end{align*}
and
\begin{align*}
\overline{\mathcal{R}}(\nu,\alpha_2)=\min_{k,\ell\in \{1,2,...\}}\mathcal{R}(\nu,\alpha_2,k,\ell),
\end{align*}
then $|\mathcal{U}|\ll X^{\tilde{\mathcal{R}}(\nu,\alpha_2)+o(1)}$. It turns out that the case $k=3$ is always optimal for us, and it suffices to restrict to $4\leq \ell\leq 12$ (so our upper bound for $\overline{\mathcal{R}}(\nu,\alpha_2)$ is a minimum of $9$ piecewise linear functions). Now we check that, with our choices of $\beta_1, \beta_2, \beta$ and $\alpha_2$,
\begin{align*}
\overline{\mathcal{R}}(\nu,\alpha_2)\leq \frac{1}{2}-\nu+\min\left\{2\sigma(\nu),\frac{\nu}{2}\right\}-\varepsilon\end{align*}
for $\nu \in [0.05,\beta_1]\cup [\beta_1,\beta_2]$. Verifying this is straightforward, because both sides are piecewise linear functions.\footnote{These computations can be carried out by hand with a bit of patience. For example, the case $\ell=4$ in Jutila's bound is good enough in the range $\nu \in [\frac{16315}{90496},\frac{15311}{78512}]$, and the bound for $\ell=5$ is good enough when $\nu \in [\frac{753}{5554},\frac{15311}{91112}]$. These intervals are $[\beta_2,\beta]$ and $[0.1356,\beta_1]$, up to rounding.}\\

We must also prove the desired estimate for $|\mathcal{U}'|$ in the range $\nu\in [0,0.05).$ In this case, we do not appeal to Jutila's large values theorem, but to Lemma \ref{7} (along with its remark), which tells us that
\begin{align*}
|\mathcal{U}'|\ll T^{2\alpha_2}X^{2\alpha_2\nu+o(1)}\ll X^{0.42}<X^{\frac{1}{2}-\nu-\varepsilon}
\end{align*}
for the same value $\alpha_2=\frac{1}{2\cdot 2.51}+C_6\varepsilon$. This means that for $c=3.51$, $\frac{1}{h}\Sigma_3^{(1)}(h)$ can be replaced with its dyadic counterpart almost always.

\subsubsection{Type I sums}

We turn to the sum $\Sigma_3^{(2)}(h)$. By applying Buchstab's identity twice, we find that
\begin{align*}
\Sigma_3^{(2)}(h)&=\sum_{\substack{x\leq p_1q_1q_2n\leq x+h\\P_1\leq p_1\leq P_1^{1+\varepsilon}\\(q_1,q_2)\in A_2\\(n,\mathcal{P}(w))=1\\n>1}}1-\sum_{\substack{x\leq p_1q_1q_2q_3n\leq x+h\\P_1\leq p_1\leq P_1^{1+\varepsilon}\\(q_1,q_2)\in A_2\\w\leq q_3<q_2\\(n,\mathcal{P}(w))=1\\n>1}}1+\sum_{\substack{x\leq p_1q_1q_2q_3q_4n\leq x+h\\P_1\leq p_1\leq P_1^{1+\varepsilon}\\(q_1,q_2)\in A_2\\w\leq q_4<q_3<q_2\\(n,\mathcal{P}(q_4))=1\\n>1}}1.
\end{align*}
Call these sums $\Sigma_3^{(2,1)}(h),\Sigma_3^{(2,2)}(h)$ and $\Sigma_3^{(2,3)}(h)$, respectively. We show that $\frac{1}{h}\Sigma_3^{(2,1)}(h)$ and $\frac{1}{h}\Sigma_3^{(2,2)}(h)$ can be replaced with their dyadic counterparts almost always. We confine to studying $\Sigma_3^{(2,2)}(h)$, as $\Sigma_3^{(2,1)}(h)$ is easier to handle.\\

We may make in $\Sigma_3^{(2,2)}(h)$ the additional assumption that all the variables except $P_1$ are in the intervals $[X^{\beta_1},X^{\beta_2}]\cup[X^{\beta},X]$, since otherwise the sum can be dealt with in the same way as $\Sigma_3^{(1)}(h)$. We may also assume that $q_i\in [Q_i,Q_i^{1+\varepsilon}]$ for some $Q_i$. Defining
\begin{align*}
F(s)=\sum_{\substack{p_1q_1q_2q_3dn\sim X\\P_1\leq p_1\leq P_1^{1+\varepsilon}\\Q_i\leq q_i\leq Q_i^{1+\varepsilon}\\(q_1,q_2)\in A}}\lambda_d^{\pm} (p_1q_1q_2q_3dn)^{-s},
\end{align*}
with $\lambda_d^{\pm}$ the same Brun's sieve weights as before (the sign being the same throughout), and taking into account the prime number theorem in short intervals and Lemmas \ref{1} and \ref{6}, it suffices to show that
\begin{align*}
\int_{T_0}^{T}|F(1+it)|^2 dt\ll \left(\frac{TP_1\log X}{X}+1\right)\frac{1}{\log^{2+\varepsilon} X}.
\end{align*}
This bound is achieved similarly as in Proposition \ref{p7}. Indeed, if $\mathcal{T}_1$  is defined as in the proof of that proposition, the integral over $\mathcal{T}_1$ can be estimated in the same way as in that proposition. In the complementary case, we separate all the variables, and it remains to show that
\begin{align*}
&\int_{[T_0,T]\setminus \mathcal{T}_1} |P_1(1+it)Q_1(1+it)Q_2(1+it)Q_3(1+it)D(1+it)N(1+it)|^2 dt\\
&\ll (\log X)^{-100},
\end{align*}
where $N(s)$ is a zeta sum, $P_1(s)$ and $Q_i(s)$ are polynomials supported on primes, and $D(s)$ has the sieve weights $\lambda_d$ as its coefficients (actually, $D(s)$ can be neglected by simply estimating it pointwise). Moreover, the lengths $P_1,Q_i, D$ and $N$ are from the same intervals as $p_1,q_i,d$ and $n$, respectively (in particular, $d\leq \exp\left(\frac{\log X}{\log \log X}\right)$). We appeal to Proposition \ref{p5} with $Q(s)=P_1(s)^{\ell}$, $P_1^{\ell}=X^{\varepsilon}$ and with $M(s)$ either $Q_1(s)Q_3(s)$ or $Q_2(s)Q_3(s)$. If $M(s)=Q_1(s)Q_3(s)$, the condition for Proposition $\ref{p5}$ is $Q_2\leq X^{\frac{1}{4}-2\varepsilon}$, $(Q_1Q_3)^2Q_2\leq X$. If in turn $M(s)=Q_2(s)Q_3(s)$, the condition for Proposition \ref{p5} is $Q_1\leq X^{\frac{1}{4}-2\varepsilon}$, $Q_1(Q_2Q_3)^2\leq X$, and one of these conditions is always satisfied in our domain $A_2$, since $Q_3\leq Q_2$ and automatically $Q_2\leq X^{\frac{1}{5}}$. Now it follows from \eqref{eq41} that for $a\geq 2+C_7\varepsilon$, $\Sigma_3^{(2,2)}(h)$ has the desired asymptotic, and $\Sigma_3^{(2,1)}(h)$ can be evaluated similarly.\\

In the sum $\Sigma_3^{(2,3)}(h)$, we may again assume that all the variables lie in the intervals $[X^{\beta_1},X^{\beta_2}]\cup[X^{\beta},X]$, as otherwise we can use the type II sum argument. Let $\Sigma_3^{(2,4)}(h)$ be what remains of $\Sigma_3^{(2,3)}(h)$ after this reduction. The sum $\Sigma_3^{(2,4)}$ results in a Buchstab integral, and hence is postponed to Subsection \ref{subsubsec:buchstab}.

\subsubsection{Buchstab integrals}\label{subsubsec:buchstab}

We are left with the sums $\Sigma_3^{(3)}(h)$ and $\Sigma_3^{(2,4)}(h)$, for which no asymptotic was found. We want to show that 
\begin{align*}
\frac{1}{X}\Sigma_3^{(3)}(X)+\frac{1}{X}\Sigma_3^{(2,4)}(X)\leq (1-\varepsilon)\frac{1}{X}S_X,
\end{align*}
which would complete the proof of Theorem \ref{t5}, taking into account the estimates \eqref{eq38} and \eqref{eq39}. The following lemma allows us to transform our sums into Buchstab integrals.

\begin{lemma}
Let a positive integer $k$ and $\eta>0$ be fixed. Let 
\begin{align*}
A\subset \{(u_1,...,u_k)\in \mathbb{R}^k:\,\, u_1,...,u_k\geq \eta,\,\, u_1+...+u_k\leq 1-\eta\}
\end{align*}
be any set such that $1_A$ is Riemann integrable. For a point $q=(q_1,...,q_k)\in \mathbb{R}^k$ and $X\geq 2$, define $\mathcal{L}(q)=(\frac{\log q_1}{\log X},...,\frac{\log q_k}{\log X})$. Then
\begin{align*}
&\sum_{\substack{p_1q_1\dotsm q_k n\sim X\\P_1\leq p_1\leq P_1^{1+\varepsilon}\\\mathcal{L}(q_1,...,q_k)\in A\\(n,\mathcal{P}(q_k))=1}}1\\
&=(1+o(1))\log(1+\varepsilon)\frac{X}{\log X}\int_{(u_1,...,u_k)\in A}\omega\left(\frac{1-u_1-\dotsm -u_k}{u_k}\right)\frac{du}{u_1\dotsm u_{k-1}u_k^2},
\end{align*}
where $\omega(\cdot)$ is Buchstab's function.
\end{lemma}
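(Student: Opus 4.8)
The strategy is to evaluate the left-hand side by first summing over the smooth variable $n$ using the fundamental property of Buchstab's function, and then recognizing the resulting $(k+1)$-fold sum over primes as a Riemann sum for the stated integral. More precisely, fix a point $q=(q_1,\dots,q_k)$ with $\mathcal{L}(q)\in A$ and $p_1\in[P_1,P_1^{1+\varepsilon}]$. For the inner sum over $n$ with $p_1q_1\cdots q_k n\sim X$ and $(n,\mathcal{P}(q_k))=1$, write $y=X/(p_1q_1\cdots q_k)$, so that $n$ ranges over an interval of length $\asymp y$ and is sieved by all primes $<q_k$. By the standard asymptotic for $S(A,\mathbb{P},z)$ with the sifting parameter equal to $q_k$ (see e.g.\ Harman's book, Chapter 1, or any treatment of Buchstab's function), this count is
\begin{align*}
\#\{n\sim y:(n,\mathcal{P}(q_k))=1\}=(1+o(1))\,\omega\!\left(\frac{\log y}{\log q_k}\right)\frac{y}{\log q_k},
\end{align*}
uniformly in the relevant range, since $q_k\geq X^{\eta}$ and $y=X^{1-u_1-\dots-u_k+o(1)}\geq X^{\eta}$, so the ratio $\frac{\log y}{\log q_k}=\frac{1-u_1-\dots-u_k}{u_k}+o(1)$ is bounded and bounded away from $0$. (One must check continuity of $\omega$ to absorb the $o(1)$ in the argument; $\omega$ is continuous on $[1,\infty)$ and constant $=0$ on $[0,1)$ with the only discontinuity at $1$, but the constraint $u_1+\dots+u_k\le 1-\eta$ keeps us a fixed distance from that point, or else $\omega$ vanishes identically there and the contribution is negligible.)

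\textbf{Assembling the prime sum.} Substituting this back, and writing $y=X/(p_1q_1\cdots q_k)$, the left-hand side becomes
\begin{align*}
(1+o(1))\,X\sum_{P_1\le p_1\le P_1^{1+\varepsilon}}\frac{1}{p_1}\sum_{\mathcal{L}(q)\in A}\frac{1}{q_1\cdots q_k\log q_k}\,\omega\!\left(\frac{\log(X/(p_1q_1\cdots q_k))}{\log q_k}\right).
\end{align*}
The sum over $p_1$ factors out: by Mertens' theorem $\sum_{P_1\le p_1\le P_1^{1+\varepsilon}}\frac1{p_1}=\log(1+\varepsilon)+o(1)$, and since $\log p_1=o(\log X)$ it contributes nothing to the $\omega$-argument. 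It remains to show that
\begin{align*}
\sum_{\mathcal{L}(q_1,\dots,q_k)\in A}\frac{1}{q_1\cdots q_k\log q_k}\,\omega\!\left(\frac{1-\frac{\log q_1}{\log X}-\dots-\frac{\log q_k}{\log X}}{\frac{\log q_k}{\log X}}\right)=(1+o(1))\frac{1}{\log X}\int_{A}\omega\!\left(\frac{1-u_1-\dots-u_k}{u_k}\right)\frac{du}{u_1\cdots u_{k-1}u_k^2}.
\end{align*}
This is a Riemann-sum statement: the change of variables $u_i=\frac{\log q_i}{\log X}$ together with the prime number theorem (in the form $\sum_{q\le t}\frac1q=\log\log t+O(1)$, or more precisely the local statement $\sum_{Q\le q\le Q^{1+\delta}}\frac1q=\log\frac{\log Q^{1+\delta}}{\log Q}+o(1/\log X)$) converts each sum over a dyadic-type block of primes $q_i$ into an integral over $u_i$, with the extra $\frac1{\log X}$ from the variable $u_k$ appearing because $\frac{1}{q_k\log q_k}=\frac{1}{q_k}\cdot\frac{1}{u_k\log X}$, which is exactly the source of the $u_k^2$ in the denominator. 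The Riemann integrability of $1_A$ is used to control the boundary blocks, and the uniform boundedness and Riemann integrability of $\omega\big(\frac{1-u_1-\dots-u_k}{u_k}\big)$ on the compact region $A$ (where $u_k\ge\eta$ bounds the argument and the denominator away from degeneracies) ensures the error terms are genuinely $o(1/\log X)$.

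\textbf{Main obstacle.} The routine parts are Mertens/PNT summation and the fundamental-lemma-style asymptotic for the sifted count of $n$; the genuinely delicate point is the uniformity needed to push the sieve asymptotic through \emph{before} summing over the many prime variables, i.e.\ establishing $\#\{n\sim y:(n,\mathcal{P}(q_k))=1\}=(1+o(1))\omega(\log y/\log q_k)\,y/\log q_k$ with an error that is uniformly $o(1)$ as $p_1,q_1,\dots,q_k$ vary, and simultaneously handling the locus where $\log y/\log q_k$ approaches an integer (where $\omega'$ is discontinuous) or where some $q_i$ lies near the endpoints forced by $\mathcal{L}(q)\in A$. One disposes of the near-integer locus by noting it has measure $o(1)$ in the $u$-variables (so its contribution to the Riemann sum is $o(1/\log X)$ by the $\ll 1$ bound on the integrand), and of the near-boundary locus by the Riemann integrability of $1_A$; the uniform sieve asymptotic itself follows from the fundamental lemma of the sieve since $q_k=X^{\eta+o(1)}$ is a small power of $y$, which makes the sieve dimension-one sieve extremely efficient with a power-saving error $\ll y\exp(-c\log y/\log q_k)=o(y/\log q_k)$. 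I would state these uniformity inputs as a short sub-lemma (or cite the corresponding statement in \cite{harman-sieves}, Chapter 1, or \cite{friedlander}, Chapter 6) rather than reprove them.
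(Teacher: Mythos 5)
Your proposal is correct and follows essentially the same route as the paper: evaluate the inner sifted sum over $n$ via the Buchstab-function asymptotic for $S(\cdot,\mathbb{P},q_k)$, factor out the $p_1$-sum by Mertens to produce $\log(1+\varepsilon)$, and convert the remaining prime sums into the integral by partial summation, using Riemann integrability of $1_A$ to control boundary contributions (the paper packages this last point as a preliminary reduction to boxes, but the substance is identical).
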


\begin{proof} It suffices to prove the statement in the case that $A$ is a box, that is, a set of the form $I_1\times ...\times I_k$ with $I_i$ intervals. Indeed, if the statement holds for boxes, then it holds for finite unions of boxes. Moreover, since $1_A$ is Riemann integrable, for every $\delta>0$ there is a finite union $\mathcal{B}$ of boxes such that $A\setminus \mathcal{B}$ has measure at most $\delta$. The part of $A$ not contained in $\mathcal{B}$ contributes at most $\eta^{-k-1}\delta$ to the integral, and as $\delta\to 0$, this becomes arbitrarily small.\\

Now let $A$ be a box. Using the connection between Buchstab's function and the sieving function (see the Appendix of Harman's book \cite{harman-sieves}), summing partially, and using the change of variables $u_i=\frac{\log v_i}{\log X}$, we see that
\begin{align*}
\sum_{\substack{p_1q_1\dotsm q_k n\sim X\\P_1\leq p_1\leq P_1^{1+\varepsilon}\\\mathcal{L}(q_1,...,q_k)\in A\\(n,\mathcal{P}(q_k))=1}}1&=\sum_{\substack{P_1\leq p_1\leq P_1^{1+\varepsilon}\\\mathcal{L}(q_1,...,q_k)\in A}}S\left(\left[\frac{X}{p_1q_1\dotsm q_k},\frac{2X}{p_1q_1\dotsm q_k}\right],\mathbb{P},q_k\right)\\
&=(1+o(1))\sum_{P_1\leq p_1\leq P_1^{1+\varepsilon}\atop \mathcal{L}(q_1,...,q_k)\in A}\frac{X}{p_1q_1\dotsm q_k\log q_k}\omega\left(\frac{\log \frac{X}{p_1q_1\dotsm q_k}}{\log q_k}\right)\\
&=(1+o(1))\sum_{P_1\leq p_1\leq P_1^{1+\varepsilon}}\frac{1}{p_1}\sum_{\mathcal{L}(q_1,...,q_k)\in A}\frac{X}{q_1...q_k\log q_k}\omega\left(\frac{\log \frac{X}{q_1...q_k}}{\log q_k}\right)\\
&=(b+o(1))\int\limits_{\mathcal{L}(v_1,...,v_k)\in A}\frac{X}{v_1\dotsm v_k\log v_1\dotsm \log^2 v_k}\omega\left(\frac{\log \frac{X}{v_1\dotsm v_k}}{\log v_k}\right)dv\\
&=(b+o(1))\frac{X}{\log X}\int\limits_{(u_1,...,u_k)\in A}\frac{1}{u_1\dotsm u_k^2}\omega\left(\frac{ 1-u_1-\dotsm u_k}{u_k}\right)du
\end{align*}
with $b=\log(1+\varepsilon)$, as wanted.\end{proof}

Let 
\begin{align*}
A_3^{*}=&\{(u_1,u_2):\,\, u_2<u_1,\,\,u_1,u_2\in [\beta_1,\beta_2]\cup [\beta,\frac{1}{2}],\,\,2u_1+3u_2\geq 1,\\
&\,\,\max\{u_1+4u_2,4u_1-10\varepsilon\}\geq 1\},\\
A_2^{*}=&\{(u_1,u_2,u_3,u_4):\,\, \beta_1\leq u_4<u_3<u_2<u_1,\,\,u_1,u_2,u_3,u_4\not\in [\beta_2,\beta],\,\, (u_1,u_2)\in A_2\}
\end{align*}
be the sets corresponding to the summation conditions in $\Sigma_3^{(3)}(X)$ and $\Sigma_3^{(2,4)}(X)$, respectively. The lemma above directly implies that
\begin{align*}
\frac{1}{X}\Sigma_3^{(3)}(X)&=\frac{(1+o(1))\log(1+\varepsilon)}{\log X}J_1,\\
\frac{1}{X}\Sigma_3^{(2,4)}(X)&=\frac{(1+o(1))\log(1+\varepsilon)}{\log X}J_2,\\
\frac{1}{X}S_X&=\frac{(1+o(1))\log(1+\varepsilon)}{\log X}
\end{align*}
where $J_1$ and $J_2$ are given by
\begin{align*}
J_1&=\int\limits_{(u_1,u_2)\in A_3^{*}}\omega\left(\frac{1-u_1-u_2}{u_2}\right)\frac{du}{u_1u_2^2},\\
J_2&=\int\limits_{(u_1,u_2,u_3,u_4)\in A_2^{*}}\omega\left(\frac{1-u_1-u_2-u_3-u_4}{u_4}\right)\frac{du}{u_1u_2u_3u_4^2}.
\end{align*}
To compute $J_1$, we approximate Buchstab's function by
\begin{align*}
\omega(u)\leq \begin{cases}0,\quad u<1\\\frac{1}{u},\quad 1\leq u\leq 2\\\frac{1+\log(u-1)}{u},\quad 2\leq u\leq 3\\\frac{1+\log 2}{3},\quad u>3\end{cases}
\end{align*}
For $u\leq 3$ this is an equality, and for $u>3$ the bound very sharp (it differs from the limiting value $e^{-\gamma}$, where $\gamma$ is Euler's constant, by less than $0.003$), but we only need the fact that it is an upper bound. We compute with Mathematica that $J_1<0.988$ (when $\varepsilon$ in the definition of $A_3^{*}$ is small enough).\footnote{The Mathematica code can be found at \nolinkurl{http://codepad.org/XCqx2iH3} . There is also a Python code for computing the integral at \nolinkurl{http://codepad.org/cVx065z5}, where the integration method is a rigorous computation of an upper Riemann sum.}\,\, The integral $J_2$ only gives a minor contribution, and hence can be estimated crudely as
\begin{align*}
J_2&\leq \beta_1^{-5}\int\limits_{(u_1,u_2,u_3,u_4)\in A_2^{*}\atop u_1+u_2+u_3+2u_4\leq 1}du\\
&<\beta_1^{-5}\int\limits_{\substack{\beta_1<u_4<u_3<u_2<u_1\\u_1+u_2+u_3+2u_4\leq 1}}du<0.007
\end{align*}
with Mathematica (the last integral could actually be evaluated exactly). To sum up, we have $J_1+J_2<0.995<1-\varepsilon$, and this means, in view of \eqref{eq39}, that with our parameter choices $\beta_1,\beta_2,\beta$, the sums $\Sigma_3^{(3)}(X)$ and $\Sigma_3^{(2,4)}(X)$ can be discarded. Now, from \eqref{eq38} and \eqref{eq39} we have $\frac{1}{h}S_h(x)\geq \varepsilon\cdot \frac{1}{X}S_X$, so Theorem \ref{t5} is proved.\qedd

\begin{remark}
We can now observe that $c=3+\varepsilon$ is the limit of this method. Indeed, we are forced to take $\alpha_2\leq \frac{1}{4}$ in the type II case, because nothing nontrivial is known about the large values of Dirichlet polynomials beyond this region, and consequently $a=\frac{1}{2\alpha_2}+\varepsilon\geq 2+\varepsilon$ and $c\geq 3+\varepsilon$.
\end{remark}

\bibliography{myreferences}{}
\bibliographystyle{plain}

\textsc{Department of Mathematics and statistics, University of Turku, 20014 Turku, Finland}\\
\textit{Email address:} \textup{\texttt{joni.p.teravainen@utu.fi}}

\end{document}